\documentclass[11pt,reqno]{article}
\usepackage{epsfig,amssymb,latexsym,amsmath,pifont,multicol,mathtools,verbatim,amsthm,float,lipsum}
\usepackage{caption} 
\usepackage[utf8]{inputenc}
\usepackage[T1]{fontenc}

\usepackage[varg]{txfonts}
\let\mathbb=\varmathbb

\usepackage[sans]{dsfont}

\usepackage[left=2cm, right=2cm, top=2cm, bottom=2cm]{geometry}

\usepackage[%
cal=euler,
bb=fourier,
scr=zapfc,
]
{mathalfa}

\newcommand{\R}{\mathbb{R}}

\newcommand{\N}{\mathbb{N}}
\newcommand{\tto}{\rightrightarrows}
\newcommand{\gph}{\operatorname{gph}}
\newcommand{\dom}{\operatorname{dom}}

\renewcommand{\epsilon}{\varepsilon}

\newcommand{\cl}{{\text{cl }}}

\DeclareMathOperator{\proj}{proj}
\DeclareMathOperator{\aff}{aff}
\newcommand{\Aver}[2]{\overline{#1}_{#2}}
\usepackage[font=small,labelfont=bf]{caption}
\usepackage{subfigure}
\usepackage{graphicx}
\graphicspath{{Figures/}} 
\usepackage{acronym}
\usepackage{latexsym}
\usepackage{paralist}
\usepackage{wasysym}
\usepackage{xspace}
\usepackage{framed}
\usepackage{palatino,pxfonts}
\usepackage{authblk}
\usepackage{enumitem}
\usepackage[numbers,sort&compress]{natbib}

\usepackage[dvipsnames,svgnames]{xcolor}
\colorlet{MyBlue}{DodgerBlue!75!Black}
\colorlet{MyGreen}{DarkGreen!95!Black}

\usepackage{hyperref}
\hypersetup{
colorlinks=true,
linktocpage=true,
pdfstartview=FitH,
breaklinks=true,
pdfpagemode=UseNone,
pageanchor=true,
pdfpagemode=UseOutlines,
plainpages=false,
bookmarksnumbered,
bookmarksopen=false,
bookmarksopenlevel=1,
hypertexnames=true,
pdfhighlight=/O,
urlcolor=MyBlue!60!black,linkcolor=MyBlue!70!black,citecolor=DarkGreen!70!black, % <--- for screen
pdftitle={},
pdfauthor={},
pdfsubject={},
pdfkeywords={},
pdfcreator={pdfLaTeX},
pdfproducer={LaTeX with hyperref}
}
\numberwithin{equation}{section}  %numberwithin goes before cleverefs when using hyperref
\usepackage[sort&compress,capitalize,nameinlink]{cleveref}
\crefname{example}{Ex.}{Exs.}

\crefrangeformat{equation}{\upshape(#3#1#4)\textendash(#5#2#6)}

\usepackage{algorithm}
\usepackage{algpseudocode}
\theoremstyle{plain}
\newtheorem{theorem}{Theorem}
\newtheorem{corollary}[theorem]{Corollary}
\newtheorem*{corollary*}{Corollary}
\newtheorem{lemma}[theorem]{Lemma}
\newtheorem{proposition}[theorem]{Proposition}

\theoremstyle{definition}
\newtheorem{definition}[theorem]{Definition}
\newtheorem*{definition*}{Definition}
\newtheorem*{problem*}{Problem}
\newtheorem{assumption}{Assumption}
\theoremstyle{remark}
\newtheorem{remark}{Remark}
\newtheorem*{remark*}{Remark}
\newtheorem*{notation*}{Notational remark}
\newtheorem{example}{Example}

\numberwithin{theorem}{section}
\numberwithin{remark}{section}
\numberwithin{example}{section}

\title{Stochastic Differential Inclusions driven by Maximal Monotone Operators with empty interiors}
\date{\today}

\author[1]{\small Juan Guillermo Garrido}
\author[1]{\small Pedro Pérez-Aros} 
\author[2]{\small Mathias Staudigl}

\affil[1]{\footnotesize Departamento de Ingenier\'ia Matem\'atica,  Universidad de Chile, Santiago, Chile\\
(\href{mailto:jgarrido@dim.uchile.cl}{jgarrido@dim.uchile.cl},\href{mailto:pperez@dim.uchile.cl}{pperez@dim.uchile.cl}) }
\affil[2]{\footnotesize Mannheim University, Department of Mathematics, B6 26, 68159 Mannheim, Germany\\
(\href{mailto:mathias.staudigl@uni-mannheim.de}{m.staudigl@uni-mannheim.de})}

\begin{document}

\maketitle

\begin{abstract}%
  This paper studies the long-time behavior of stochastic differential inclusions driven by maximal monotone operators, motivated by continuous-time models of first-order optimization methods under noisy or approximate operator information. We first address well-posedness and show that existence and uniqueness can be established without the customary requirement that the operator’s domain has nonempty interior, by adopting an appropriate notion of solution. We then analyze asymptotic properties of the resulting stochastic dynamics, extending convergence guarantees beyond previously studied settings that rely on smooth potentials, full-domain subdifferentials, or Lipschitz monotone operators. In addition, we consider a  Tikhonov-type regularization of the stochastic inclusion and prove corresponding well-posedness and long-time convergence results.
\end{abstract}

% REQUIRED
\paragraph{Keywords.}
  Stochastic differential inclusion, Maximal monotone operator, Asymptotic analysis, Convergence Rates, Tikhonov regularization

\paragraph{AMS subject classifications.}
  60H10, 37N40, 90C25, 49J52, 34F05, 47H05  

\section{Introduction}
Let $f\colon\R^{d}\to\R\cup\{\infty\}$ be a proper, convex, and lower semicontinuous function.
A central continuous-time model in convex optimization is the \emph{subgradient flow}
\begin{equation}\label{subgradient-system1}
    -x'(t)\in \partial f(x(t)),
    \qquad x(0)=x_0\in\dom f,
\end{equation}
where $\partial f$ denotes the convex subdifferential. In finite dimension, solutions of \eqref{subgradient-system1}
are globally defined and, whenever the function attains its minimum, converge as $t\to\infty$ to minimizers of $f$.
More broadly, \eqref{subgradient-system1} is a particular instance of evolution inclusions governed by maximal monotone
operators, whose well-posedness and long-time behavior have been extensively studied
(see, e.g., \cite{MR755330,MR348562,MR2731260}).

This dynamical viewpoint is also tightly connected to numerical algorithms from mathematical optimization. Indeed, classical time discretizations of
maximal-monotone evolutions yield fundamental first-order methods, such as implicit schemes and splitting procedures
based on resolvents and proximal mappings. In turn, qualitative properties of the continuous-time trajectories
(Lyapunov decrease, asymptotic convergence, stability) provide a useful blueprint for interpreting convergence
mechanisms and, in favorable regimes, rates for broad classes of descent algorithms (see, e.g.,
\cite{MR1398330,MR2593044}, and the references therein).

Modern large-scale applications often involve uncertainty, where the driving first-order information is noisy,
incomplete, or obtained through sampling. In such settings, exact evaluations of $\partial f$ (or, more generally, of a
monotone operator) may be prohibitively expensive, while stochastic approximations are computationally viable.
This motivates stochastic counterparts of maximal-monotone dynamics, which can be viewed as continuous-time version
of stochastic first-order schemes (see, e.g., \cite{MR3361444}).

In this work we study the long-time behavior of solutions to the stochastic differential inclusion (SDI)
\begin{equation}\label{MainSDI}
    dX_t \in -A(X_t)\,dt + \sigma(t,X_t)\,dB_t, 
    \qquad X_0 \in \cl \dom A,
\end{equation}
where $A\colon\R^d\rightrightarrows\R^d$ is a maximal monotone operator, $\sigma\colon [0,T]\times \R^d \to \R^{d\times\ell}$
is a given diffusion coefficient, and $B$ denotes an $\ell$-dimensional Brownian motion.
A first fundamental issue in this multivalued setting is well-posedness. Existing approaches typically impose structural
conditions that allow one to adapt classical SDI techniques; in particular, it is customary to assume that $\dom A$ has
nonempty interior (see, e.g., \cite{MR1442017,MR1626174,MR2380366}).
Our first contribution is to show that this interiority assumption can be removed by adopting a modified notion of
solution. This yields existence and uniqueness results for general SDI without requiring any additional assumption of the maximal monotone operator $A$.

A second, and central, question concerns the asymptotic behaviour of \eqref{MainSDI}.
Recent works have established convergence results primarily in settings where the operator has additional structure.
Most contributions address the smooth case $A=\nabla f$, with $f$ convex and differentiable, and show that several
convergence properties of the deterministic dynamics extend to the stochastic setting (see, e.g.,
\cite{MR3747470,MR4989859}).
In the nonsmooth regime, \cite{MR4991707} treats the case $A=\partial g$ for convex, continuous
functions $g$ with full domain, while related results for Lipschitz continuous monotone operators are obtained in
\cite{MR4845874}. The aim of the present work is to go beyond these frameworks and establish convergence results
for \eqref{MainSDI} driven by a general maximal monotone operator.

Finally, we also analyze a Tikhonov-type regularization of \eqref{MainSDI}, namely
\begin{equation}\label{tikhonov-system}
    dX_t \in -A(X_t)\,dt - \epsilon(t) X_t\,dt + \sigma(t,X_t)\,dB_t,
    \qquad X_0 \in \cl \dom A,
\end{equation}
where $\epsilon(\cdot)$ is a nonnegative parameter function. This additional term plays the role of a vanishing
regularization that is relevant both for asymptotic selection mechanisms and for algorithmic stabilization.
Building on our well-posedness theory for general SDIs, we establish existence and uniqueness for
\eqref{tikhonov-system} and derive its long-time convergence properties alongside those of
\eqref{MainSDI}.

The recent paper \cite{luke2026asymptotic} is very much related to this work. They study a penalty-regulated stochastic differential inclusion and prove that the trajectories of the constructed dynamical system exhibit asymptotic convergence towards a solution of constrained variational inequality. This result is obtained under the standard setting, assuming that the driving maximally monotone operator in the drift has non-empty interior. The present paper complements thus the findings of \cite{luke2026asymptotic} by proving existence and uniqueness of solution for a much more general class of operators. Moreover, we perform in this paper the analysis of the Tikhonov regularized dynamical system, which behaves dynamically very differently to penalty-governed dynamics.  

 The rest of the work is organized as follows: In Section \ref{Section2}, we introduce the notation, definitions, and auxiliary lemmas used throughout the work. Section \ref{Section3} extends the existence and uniqueness theory for a general SDI, without any assumption on the maximal monotone operator (in fact, we not require the non-emptiness of the interior of the domain). In Section \ref{Section4}, we establish general convergence results for the solutions of \eqref{MainSDI} under mild decaying assumptions on $\sigma$, which extends the convergence results obtained in \cite{MR4989859}. In particular, we prove convergence of the averaged process and show that, whenever the associated deterministic dynamics (i.e., when $\sigma = 0$ in \eqref{MainSDI}) converges to a zero of $A$, the stochastic system \eqref{MainSDI} converges as well. We also provide rates of convergence in some special cases. Section \ref{Section5} focuses on the case $A = \partial \varphi$, with $\varphi$ being a proper, convex, and lower semicontinuous function, and provides convergence rates in general settings, extending the study of \cite{MR4989859, MR3747470} in the smooth case ($\varphi\in \mathcal{C}^{1,+}$ or $\mathcal{C}^2$) to the nonsmooth setting. Finally, in Section \ref{Section6}, we study the convergence of solutions to \eqref{tikhonov-system} without additional assumptions on $A$ and under mild hypotheses on the parameter function $\epsilon$.
\section{Notation and Preliminaries in Convex Analysis}\label{Section2}

In this section we introduce the notation and some preliminary results that will be used throughout the paper. 
The Euclidean space $\mathbb{R}^d$ is equipped with the standard inner product and the associated norm
\[
  \langle x,y\rangle = x^\top y, 
  \qquad \|x\| = \sqrt{\langle x,x\rangle}.
\]
For a matrix $P \in \mathbb{R}^{m \times n}$, its \emph{adjoint} (transpose) is denoted by $P^\ast$. 
We write $\|P\|$ for the Frobenius norm of $P$.

\subsection{Notions in Convex Analysis}

For a set $C \subseteq \mathbb{R}^d$, the \emph{affine hull} of $C$ is defined as
\[
  \operatorname{aff}(C) := \left\{ \sum_{i=1}^m \lambda_i x_i : x_i \in C,\ \lambda \in \mathbb{R}^m,\ \sum_{i=1}^m \lambda_i = 1 \right\},
\]
that is, the smallest affine subspace containing $C$. Equivalently, if $x_0 \in C$, then
\[
  \operatorname{aff}(C) = x_0 + \operatorname{span}(C - x_0),
\]
where $C - x_0 := \{x - x_0 : x \in C\}$ and $\operatorname{span}(C - x_0)$ denotes the linear subspace generated by $C - x_0$. It follows by definition that
\begin{equation}\label{eqaff}
\operatorname{aff}(C) - a = \operatorname{aff}(C-C) = \operatorname{span}(C - C) 
\quad \forall a \in C .
\end{equation}
The \emph{convex hull} of a set $C \subseteq \mathbb{R}^d$ is defined as
\[
    \operatorname{conv}(C) := \left\{ \sum_{i=1}^m \lambda_i x_i : x_i \in C,\ \lambda_i \geq 0,\ \sum_{i=1}^m \lambda_i = 1 \right\},
\]
that is, the smallest convex set containing $C$. For a set $C \subseteq \R^d$ (not necessarily convex), the \emph{relative interior} of $C$ is defined as the interior of $C$ relative to its affine hull, namely,  
\[
    \operatorname{rint}(C) := \{x \in C : \exists \,\varepsilon > 0 \text{ such that } \mathbb{B}_\varepsilon(x) \cap \operatorname{aff}(C) \subseteq C \}.
\]
It is well-known that every convex set in finite dimensions has a nonempty relative interior (see, e.g., \cite[Proposition 2.40]{MR1491362}). More generally, if a set $U$ is nearly convex in the sense that there exists a convex set $C$ with $C \subseteq U \subseteq \cl C$, then $\operatorname{rint}(U) \neq \varnothing$. In particular, for our purposes, the domains of maximal monotone operators are nearly convex sets, and hence their relative interiors are nonempty (see, e.g., \cite[Theorem 12.41]{MR1491362}).

\noindent If $C \subset \mathbb{R}^m$ is a nonempty subset and $v \in \mathbb{R}^m$, we say that $v$ is \emph{orthogonal} to $C$, denoted $v \perp C$, if 
\[
  \langle v, x-y \rangle = 0, \qquad \forall\, x,y \in C,
\]
that is, $v$ is orthogonal to the space generated by $C - C$. If $L\colon \mathbb{R}^d \to \mathbb{R}^m$ is a linear operator, its \emph{range} (or image) is defined as
\[
    \operatorname{rge}(L) := \{ Lx : x \in \mathbb{R}^d \}.
\]
Given a set $C \subset \R^d$, and a point $x\in \R^d$, we denote  $d(x;C)$   the distance from $x$ to $C$. 
\iffalse
A function $f\colon \mathbb{R}^d \to \R\cup\{\infty\}$ is called \emph{convex} if
\[
  f(\theta x + (1-\theta)y) \leq \theta f(x) + (1-\theta) f(y), 
  \qquad \forall\, x,y \in \mathbb{R}^d,\ \theta \in [0,1].
\]
\fi 
For a function $f\colon \mathbb{R}^d \to \R\cup\{\infty\}$ the \emph{effective domain} of $f$ is given by
\[
  \operatorname{dom} f := \{x \in \mathbb{R}^d : f(x) < \infty\}.
\]
The \emph{subdifferential} of a convex function $f$ at $x \in \operatorname{dom} f$ is defined as
\[
  \partial f(x) := \{ v \in \mathbb{R}^d : f(y) \geq f(x) + \langle v, y - x \rangle \ \ \forall y \in \mathbb{R}^d \}.
\]
Any element $v \in \partial f(x)$ is called a \emph{subgradient} of $f$ at $x$.  

\noindent An operator $A\colon \mathbb{R}^d \rightrightarrows \mathbb{R}^d$ is called \emph{monotone} if
\[
  \langle u-v, x-y \rangle \geq 0, \qquad \forall (x,u), (y,v) \in \operatorname{gph} A,
\]
where the \emph{graph} of $A$ is
\[
  \operatorname{gph} A := \{(x,u) \in \mathbb{R}^d \times \mathbb{R}^d : u \in A(x)\}.
\]
The operator $A$ is said to be \emph{maximal monotone} if it is monotone and its graph cannot be properly extended while preserving monotonicity.  The resolvent of $A$ with parameter $\lambda>0$ is defined as $J_{\lambda A}(x) := (\text{Id} + \lambda A)^{-1}$. When $A$ is maximal monotone, we know that $J_{\lambda A}$ is single-valued on $\R^d$ (see, e.g., \cite[Theorem 21.1]{MR3616647}). For $x\in \dom A$, $A(x)$ is convex and closed (see \cite[Proposition 20.36]{MR3616647}) and we denote by $A^0(x)$ to the element of $A(x)$ with minimal norm.

\noindent For the sake of simplicity of the presentation, all the previous definitions and notations are understood to hold in any arbitrary finite-dimensional Hilbert space.

\noindent We recall the following well-known result, which provides a sufficient condition for the preservation of maximal monotonicity under affine transformations (see, e.g., \cite[Theorem~12.43]{MR1491362}, \cite[Theorem 4]{MR2577332}, \cite[Theorem 6]{MR2160665}, or \cite[Corollary 4.4]{MR1774774}).  

\begin{theorem}[Maximal monotonicity under composition]\label{thm:max_mono_comp}
Let $X,Y$ be two finite dimensional Hilbert spaces, and $A \colon  Y \rightrightarrows Y$ be  maximal monotone mapping, $P \colon  X \to Y$ a be a linear mapping. Define $T \colon  X \tto X$ by 
\[
  T(x) := P^{\ast} A(Px).
\]
If $  \operatorname{rge} P \cap \operatorname{rint} \left(  \dom A  \right)  \neq \emptyset$, then $T$ is maximal monotone.
\end{theorem}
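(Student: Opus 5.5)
Monotonicity of $T$ is immediate: for $u=P^\ast a\in T(x)$ and $v=P^\ast b\in T(y)$ with $a\in A(Px)$, $b\in A(Py)$, we have $\langle u-v,x-y\rangle=\langle a-b,Px-Py\rangle\ge 0$. The work is in maximality, and I will prove it through Minty's theorem: it suffices to show that $\operatorname{rge}(\mathrm{Id}+T)=X$. Equivalently, for each $z\in X$ we must find $x\in X$ and $a\in A(Px)$ with $x+P^\ast a=z$.

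The cleanest way to produce such a pair is through Fitzpatrick functions, or equivalently Fenchel duality. I will use the representative function
\[
F_A(y,y^\ast)=\sup_{(w,w^\ast)\in\gph A}\bigl(\langle y,w^\ast\rangle+\langle w,y^\ast\rangle-\langle w,w^\ast\rangle\bigr).
\]
Since $A$ is maximal monotone, $F_A$ is convex and lower semicontinuous, satisfies $F_A\ge\langle\cdot,\cdot\rangle$, and has equality exactly on $\gph A$. I then define a candidate representative for $T$ by
\[
h(x,x^\ast)=\inf\{F_A(Px,y^\ast): P^\ast y^\ast=x^\ast\}.
\]
Two facts must be checked. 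First, $h\ge\langle x,x^\ast\rangle$, which follows from $F_A(Px,y^\ast)\ge\langle Px,y^\ast\rangle=\langle x,P^\ast y^\ast\rangle$. Second, the infimum is attained and $h$ is lower semicontinuous. Given these, the standard criterion says that an operator whose graph is $\{h=\langle\cdot,\cdot\rangle\}$, with $h$ a convex representative satisfying the Fenchel-type condition $h^\ast(x^\ast,x)\ge\langle x,x^\ast\rangle$, is maximal monotone (Burachik–Svaiter / Simons). The contact set of $h$ is exactly $\gph T$: if $h(x,x^\ast)=\langle x,x^\ast\rangle$ and the infimum is attained at some $y^\ast$, then $F_A(Px,y^\ast)=\langle Px,y^\ast\rangle$, so $y^\ast\in A(Px)$ and $x^\ast=P^\ast y^\ast\in T(x)$.

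The main obstacle is the conjugate computation and attainment, and this is exactly where the hypothesis $\operatorname{rge}P\cap\operatorname{rint}(\dom A)\neq\emptyset$ enters. I expect $h^\ast(x^\ast,x)=\min_{y}\{F_A^\ast(y,Px)\}$ over $y$ with $P^\ast y=x^\ast$, or a similar formula with the roles arranged suitably. This is an instance of the Fenchel–Rockafellar duality formula for $(F_A\circ(P\times\mathrm{Id}))$ under a linear constraint, and it needs a finite-dimensional relative-interior qualification. That qualification reduces to $\operatorname{rge}P\cap\operatorname{rint}(\operatorname{proj}_1\dom F_A)\ne\emptyset$. Since $\dom A\subseteq\operatorname{proj}_1\dom F_A\subseteq\cl\operatorname{conv}\dom A$ and $\dom A$ is nearly convex, these sets have the same relative interior, so the hypothesis gives exactly the condition required (Rockafellar, Thm.~31.1). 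Once the conjugate formula holds, the inequality $F_A^\ast(y,w)\ge\langle y,w\rangle$, valid for maximal monotone $A$, yields $h^\ast(x^\ast,x)\ge\langle x,P^\ast y\rangle=\langle x,x^\ast\rangle$.

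To close, I apply the maximality criterion to $h$, or alternatively its lower semicontinuous hull, which has the same contact set by the attainment argument. This shows that $\gph T$ is maximal monotone. If the conjugate bookkeeping becomes too delicate, a fallback is to reduce first to the case where $\dom A$ has nonempty interior: restrict $A$ to the affine hull $\aff\dom A$, recentering at a point $P x_0\in\operatorname{rint}\dom A$, and then use the classical sum rule with an interiority condition for $P^\ast A P=\partial$-type composition.
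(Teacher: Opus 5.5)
The paper gives no proof of this statement. It quotes it from \cite[Theorem~12.43]{MR1491362} and from the Fitzpatrick-function literature \cite{MR2577332,MR2160665,MR1774774}, and your route is the one used in the latter. Most of your ingredients are correct:
monotonicity; $h\ge\langle\cdot,\cdot\rangle$; the reduction of the qualification condition to $\operatorname{rge}P\cap\operatorname{rint}(\operatorname{proj}_1\dom F_A)\neq\emptyset$ via near-convexity of $\dom A$; and the formula $h^\ast(u^\ast,u)=\min\{F_A^\ast(v^\ast,Pu): P^\ast v^\ast=u^\ast\}$, which yields $h^\ast(u^\ast,u)\ge\langle u,u^\ast\rangle$.

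\textbf{The gap.} The Burachik--Svaiter/Simons criterion requires the representative to be lower semicontinuous. You list ``$h$ is lsc and the infimum is attained'' as something to check, but you never check it. The attainment supplied by the duality formula concerns the infimum defining $h^\ast$, which is a different problem. Your fallback to $\cl h$ does not repair this. $\cl h$ does satisfy the criterion, so $\{\cl h=\langle\cdot,\cdot\rangle\}$ is a maximal monotone set containing $\gph T$. But at points where $\cl h<h$, attainment in $h$ says nothing, and the inclusion can be strict.

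Lower semicontinuity cannot be dropped from the criterion. Take the function on $\R\times\R$ equal to $1$ on $\{0\}\times(-\infty,0]$, to $0$ on $(0,\infty)\times(-\infty,0]$, and to $+\infty$ elsewhere. It is convex and $\ge xx^\ast$. Its conjugate equals that of $\iota_{[0,\infty)\times(-\infty,0]}$, so the conjugate condition holds. Yet its contact set is $(0,\infty)\times\{0\}$, which is not maximal.

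\textbf{Repairs.} Either of the following works with what you already have.

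(i) Apply the criterion to $k(u,u^\ast):=h^\ast(u^\ast,u)$ instead of to $h$. Then:
\begin{itemize}
\item $k$ is convex and lsc, being a conjugate.
\item $k\ge\langle\cdot,\cdot\rangle$, by your computation.
\item $k^\ast(w^\ast,w)=h^{\ast\ast}(w,w^\ast)=(\cl h)(w,w^\ast)\ge\langle w,w^\ast\rangle$, because $h\ge\langle\cdot,\cdot\rangle$ and the duality product is continuous.
\item The contact set of $k$ is exactly $\gph T$. This follows from your contact-set argument, because the minimum defining $k$ is attained and $(a,a^\ast)\mapsto F_A^\ast(a^\ast,a)$ is itself a representative of $A$: it dominates $F_A$ and equals $\langle a,a^\ast\rangle$ on $\gph A$, so its contact set is $\gph A$.
\end{itemize}

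(ii) Alternatively, prove that $h$ is closed with attained infimum using Rockafellar's recession criterion (\emph{Convex Analysis}, Thm.~9.2).
\begin{itemize}
\item The map $(x,y^\ast)\mapsto(x,P^\ast y^\ast)$ has kernel $\{0\}\times\ker P^\ast$.
\item The recession function of $F_A$ in the direction $(0,w^\ast)$ is $\sigma_{\dom A}(w^\ast)$.
\item Suppose $w^\ast\perp\operatorname{rge}P$ and $\sigma_{\dom A}(w^\ast)\le 0$. Then $\langle\cdot,w^\ast\rangle$ attains its maximum $0$ over $\dom A$ at a point $Px_0\in\operatorname{rge}P\cap\operatorname{rint}(\dom A)$. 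Hence it vanishes on $\dom A$, so $\sigma_{\dom A}(-w^\ast)=0$.
\end{itemize}
This is exactly the hypothesis of that theorem.

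Your closing fallback is only a pointer. It would itself rely on an interior-condition composition theorem, so it does not close the gap as written.
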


\subsection{Measure theory and stochastic processes}
In what follows we work on a filtered probability space 
$(\Omega,\mathcal{F},(\mathcal{F}_t)_{t\ge 0},\mathbb{P})$ 
satisfying the standard conditions: the filtration is right-continuous 
($\mathcal{F}_t=\bigcap_{u>t}\mathcal{F}_u$ for all $t\ge 0$) and complete 
(every $\mathbb{P}$-null set in $\mathcal{F}$ belongs to $\mathcal{F}_0$). Following standard conventions and notation, we suppress the dependence on  
$\omega \in \Omega$ for convenience when dealing with random variables or 
stochastic processes. %This keeps the notation cleaner and avoids unnecessary overload.

A stochastic process with values in $\mathbb{R}^d$ (equipped with its Borel 
$\sigma$-algebra $\mathcal{B}(\mathbb{R}^d)$) is a family $X=(X_t)_{t\ge 0}$ such that 
$X_t\colon \Omega\to \mathbb{R}^d$ is measurable for each $t\ge 0$.

A process $X$ is said to be $(\mathcal{F}_t)_{t\ge 0}$-adapted if $X_t$ is 
$\mathcal{F}_t$-measurable for all $t\ge 0$. 
It is \emph{progressively measurable} (with respect to $(\mathcal{F}_t)_{t\ge 0}$) if, 
for every $t\ge 0$, the map 
\[
   (s,\omega)\mapsto X_s(\omega), \qquad (s,\omega)\in [0,t]\times\Omega,
\] 
is $\mathcal{B}([0,t])\otimes\mathcal{F}_t$-measurable.

\noindent A random time $\tau\colon \Omega\to[0,\infty]$ is a \emph{stopping time} 
(with respect to $(\mathcal{F}_t)_{t\ge 0}$) if $\{\tau\le t\}\in\mathcal{F}_t$ for all 
$t\ge 0$. Given a process $X$,  the \emph{stopped process} is defined by 
\[
   X_t^\tau := X_{t\wedge \tau}, \qquad t\ge 0.
\]
Let $p \in [1, \infty)$. we denote by  $L_{\mathbb{P}}^{p}(\Omega, \mathbb{R}^d)
$   
the set of all (equivalence classes of) $\mathcal{F}$-measurable functions 
$\xi \colon \Omega \to \mathbb{R}^d$ such that
\[
\int_{\Omega} \|\xi(\omega)\|^{p} \, d\mathbb{P}(\omega) < \infty.
\]
Particularly, we denote by $L^p_{\mathrm{loc}}(\mathbb{R}_+;\mathbb{R})$ 
the space of locally $p$-integrable functions, that is,
\[
  L^p_{\mathrm{loc}}(\mathbb{R}_+;\mathbb{R})
  := \left\{ f:\mathbb{R}_+\to\mathbb{R}\ \text{measurable} : 
      \int_0^T |f(t)|^p\,dt < \infty,\ \forall T>0 \right\}.
\]
%For a measurable function $f\colon \Omega\to \R$, we define $$\text{esssup}_\Omega f := \inf\{r\geq 0 : |f(\omega)|\leq r, \mathbb{P}\text{-a.s.}\}.$$
Finally, if $f\colon [a,b]\to \mathbb{R}^d$ is continuous and 
$g\colon [a,b]\to \mathbb{R}^d$ is of bounded variation, we define the 
\emph{Riemann–Stieltjes integral} of $f$ with respect to $g$ by
\[
   \int_a^b \langle f(t) , dg(t)  \rangle 
   := \lim_{\|\pi\|\to 0} \sum_{i=0}^{n-1} \langle f(t_i), g(t_{i+1})-g(t_i)\rangle,
\]
where the limit is taken over partitions 
$\pi=\{a=t_0<t_1<\cdots<t_n=b\}$ of $[a,b]$, provided it exists.
It is well known that this limit exists for every continuous 
$f\colon [a,b]\to \mathbb{R}^d$ and every bounded variation function 
$g\colon [a,b]\to \mathbb{R}$. We refer to \cite{MR957087} for more details.

 \subsection{Integral average and like-Opial’s Lemma}
    For every locally integrable function \\ $x\colon \R_+\to \R^d$, we define its integral average as 
    \begin{equation}\label{Int_Aver}
        \Aver{x}{t} := \frac{1}{t}\int_0^t x(s)ds.
    \end{equation}  
The following lemma is in the spirit of Opial's Lemma for the integral average defined in \eqref{Int_Aver}. See \cite[Lemma 2.3]{MR2593044}. 
    \begin{lemma}\label{opial-prop}
        Consider a continuous trajectory $x\colon [0,\infty[\to \R^d$ and some set $\mathcal{S}\neq\emptyset$ and suppose that for all $x^\star\in \mathcal{S}$, $\displaystyle\lim_{t\to\infty}\|x(t)-x^\star\|$ exists and every cluster point of $\Aver{x}{t}$ belongs to $\mathcal{S}$. Then there is $x^\star\in \mathcal{S}$ such that $\displaystyle\lim_{t\to \infty}\Aver{x}{t} =x^\star$.
    \end{lemma}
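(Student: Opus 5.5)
The plan is to show that the averaged trajectory $\Aver{x}{t}$ is bounded, that it has at most one cluster point, and then to conclude by compactness in $\R^d$.

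\textbf{Boundedness.} Since $\mathcal{S}\neq\emptyset$, fix some $y\in\mathcal{S}$. The limit $\lim_{t\to\infty}\|x(t)-y\|$ exists, so $\|x(t)-y\|$ is bounded on $[T,\infty[$ for some $T$. It is also bounded on $[0,T]$ because $x$ is continuous. Hence $x$ is bounded, say $\|x(t)\|\le M$ for all $t$. Since $\Aver{x}{t}$ is an average of $x$, we get $\|\Aver{x}{t}\|\le M$ for all $t>0$. It follows that the set of cluster points of $\Aver{x}{t}$ as $t\to\infty$ is nonempty, and by hypothesis it is contained in $\mathcal{S}$.

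\textbf{Uniqueness of the cluster point.} Let $p,q$ be two cluster points, so $p,q\in\mathcal{S}$. Expanding the squares gives
\[
\|x(t)-p\|^2-\|x(t)-q\|^2 = 2\langle x(t), q-p\rangle + \|p\|^2-\|q\|^2 .
\]
By hypothesis the left-hand side converges as $t\to\infty$ to some limit $\ell$. Therefore $\langle x(t),q-p\rangle$ converges to $c:=\tfrac12(\ell-\|p\|^2+\|q\|^2)$.

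We now transfer this convergence to the averages. The key step, and the only nonroutine one, is the Cesàro argument: if a bounded continuous scalar function $g$ satisfies $g(t)\to c$, then $\frac1t\int_0^t g(s)\,ds\to c$. This holds because the contribution of $[0,T_\epsilon]$ is at most $T_\epsilon\sup|g|/t\to 0$, and on $[T_\epsilon,t]$ we have $|g-c|\le\epsilon$. Applied to $g(t)=\langle x(t),q-p\rangle$, and using linearity of the average, this gives $\langle \Aver{x}{t}, q-p\rangle\to c$.

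Now evaluate this limit along the two subsequences. Along $t_n$ with $\Aver{x}{t_n}\to p$ we get $\langle p,q-p\rangle=c$. Along $s_n$ with $\Aver{x}{s_n}\to q$ we get $\langle q,q-p\rangle=c$. Subtracting yields $\|q-p\|^2=0$, so $p=q$.

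\textbf{Conclusion.} The trajectory $\Aver{x}{t}$ is bounded in $\R^d$ and has exactly one cluster point $x^\star\in\mathcal{S}$. A bounded function in finite dimensions with a unique cluster point converges to it; otherwise some sequence would stay at distance at least $\epsilon$ from $x^\star$, and it would have a further cluster point different from $x^\star$. Hence $\Aver{x}{t}\to x^\star$.
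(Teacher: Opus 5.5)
Your proof is correct and is essentially the standard argument. The paper gives no proof of its own and simply cites \cite[Lemma 2.3]{MR2593044}, which proceeds the same way. It obtains boundedness from a single anchor in $\mathcal{S}$ and convergence of $\langle x(t),q-p\rangle$ from the difference of squared distances. It then transfers that limit to the averages by a Cesàro argument and uses it to show two cluster points coincide. Your finite-dimensional compactness step stands in for the weak-cluster-point argument used there in Hilbert space.
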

 The final three lemmas in this section follow from standard arguments, so we omit their proofs.
 
    \begin{lemma}\label{lem-convergence-zero}
    	Let $T_0\in \R$ and $f\colon [T_0,\infty[\to \R_+$ be a function such that  $\int_{T_0}^{\infty}f(s)ds<\infty$, then $\displaystyle\liminf_{t\to \infty} f(t) = 0$.
    \end{lemma}
%     \begin{proof}
%     By contradiction, suppose that $\displaystyle\liminf_{t \to \infty} f(t) > a > 0$. Then there exists $T > T_0$ such that $f(t) \geq a$ for all $t \geq T$. Consequently, for all $t \geq T$ we have
% \begin{equation*}
%     a(t-T) \leq \int_T^t f(s)\,ds \leq \int_{T_0}^\infty f(s)\,ds < \infty,
% \end{equation*}
% which is a contradiction since the left-hand side diverges as $t \to \infty$. Hence, we conclude that $\displaystyle\liminf_{t \to \infty} f(t) = 0$.
%     \end{proof}
    
    \begin{lemma}\label{jensen-convex}
    	Consider $f\colon \R^d \to \R\cup \{\infty\}$ be a proper, convex, and lower semicontinuous function  and $x\colon \R_+\to \R^d$ be a continuous trajectory, then
    	\begin{equation*}
    		f(\Aver{x}{t} )\leq \frac{1}{t}\int_0^tf(x(s))ds, \text{ for all  } t\in \R_+.
    	\end{equation*}
    \end{lemma}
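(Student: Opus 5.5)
The plan is to prove Lemma~\ref{jensen-convex} as a continuous version of Jensen's inequality, using the supporting-hyperplane description of $f$. I would first dispose of the trivial cases. At $t=0$ the expression $\Aver{x}{0}$ is not defined, so the claim is understood for $t>0$. If $\int_0^t f(x(s))\,ds=+\infty$ there is nothing to prove.

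That integral is well defined in $\R\cup\{+\infty\}$ for the following reason. A proper convex lower semicontinuous $f$ admits an affine minorant $f(y)\ge \langle a,y\rangle+b$ (e.g.\ \cite[Theorem 9.20]{MR3616647}). Since $x$ is continuous on $[0,t]$, this minorant is bounded below along $x$. Moreover $s\mapsto f(x(s))$ is lower semicontinuous, hence measurable. So the negative part of the integrand is integrable.

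The main step uses the standard fact that a proper convex lsc function is the pointwise supremum of its affine minorants:
\[
f(y)=\sup\{\langle a,y\rangle+b:\ (a,b)\in\R^d\times\R,\ \langle a,z\rangle+b\le f(z)\ \forall z\}
\]
(see \cite[Theorem 9.20]{MR3616647}). Fix any such affine minorant $\ell(y)=\langle a,y\rangle+b$. By linearity of the integral and continuity of $x$,
\[
\ell(\Aver{x}{t})=\frac1t\int_0^t \ell(x(s))\,ds\le \frac1t\int_0^t f(x(s))\,ds .
\]
Taking the supremum over all affine minorants $\ell$ gives $f(\Aver{x}{t})\le \frac1t\int_0^t f(x(s))\,ds$.

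This route deliberately avoids subgradients at $\Aver{x}{t}$. Those may fail to exist when $\Aver{x}{t}$ lies on the relative boundary of $\dom f$ or outside it. The only delicate point is justifying the representation of $f$ as a supremum of affine functions, which is exactly where lower semicontinuity is needed. I regard this as quoted from standard convex analysis rather than as an obstacle.

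An alternative plan would approximate the integral by Riemann sums $\frac1n\sum_i x(s_i)$ and use finite Jensen together with the lower semicontinuity of $f$. However, Riemann sums of the lsc function $f\circ x$ need not converge to its integral. The affine-minorant argument is therefore cleaner, and it is the one I would write.
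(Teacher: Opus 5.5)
Your proof is correct and is essentially the argument the authors had in mind. Their omitted proof writes $f$ as a supremum of (countably many) affine functions, applies linearity of the integral to each one at $\Aver{x}{t}$, and then takes the supremum. Your extra care fills in details the paper leaves implicit: restricting to $t>0$, and checking via an affine minorant and lower semicontinuity of $f\circ x$ that $\int_0^t f(x(s))\,ds$ is well defined in $\R\cup\{+\infty\}$.
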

%     \begin{proof}
%     	Since $f$  is a proper, convex, and lower semicontinuous function, we can write $f = \sup_{i \in I} f_i$, where $\{f_i : i \in I\}$ is a countable family of affine functions. For every $i \in I$ we have
% \begin{equation*}
%     f_i(\Aver{x}{t}) 
%       = \frac{1}{t}\int_0^t f_i(x(s))\,ds
%       \leq \frac{1}{t}\int_0^t f(x(s))ds
% \end{equation*}
% Taking the supremum over $i \in I$ on the left-hand side yields the desired inequality.
% \end{proof}

    \begin{lemma}\label{lemma-comparison}
    	Let $t_0\geq 0$ and $T>0$. Consider an lsc function $\psi\colon [t_0,T]\to \R_+$ satisfying the following inequality
    	\begin{equation*}
    		\forall t>s  : \psi(t)\leq \psi(s) - \int_s^t\ell(\tau)\psi(\tau)d\tau + \int_s^t\beta(\tau)d\tau
    	\end{equation*}
    	where $\ell\colon \R_+\to \R_+$ is continuous and $\beta$ is a locally integrable function. Then 
        \[ \psi(t)\leq \exp\left(-\int_{t_0}^t \ell(s)ds\right)\left[\psi(t_0) + \int_{t_0}^t \exp\left(\int_{t_0}^s\ell(\tau)d\tau\right) \beta(s)\,ds\right].\]
    \end{lemma}
    
%     \begin{proof}
%     	 Define $f(t,x) := \ell(t) x - \beta(t)$. Note that $f(t,\cdot)$ is nondecreasing and continuous on the second variable.  The solution of the differential equation 
% \[
%     y'(t) = -f(t,y(t)), 
%     \qquad y(t_0) = \psi(t_0),
% \]
% is given by
% \begin{equation*}
%     y(t) = \exp\left(-\int_{t_0}^t \ell(s)ds\right)\psi(t_0) + \int_{t_0}^t \exp\left(-\int_s^t\ell(\tau)d\tau\right) \beta(s)\,ds.
% \end{equation*}
% The conclusion then follows from \cite[Proposition~2.3]{MR4150268}.
%     \end{proof}
\section{The SDI Model for Monotone Inclusions and Definition of Solutions}\label{Section3}

In this section, we introduce the stochastic differential inclusion (SDI) model that will be the focus of this work, together with the notion of solution that we shall adopt.  

Let $A \colon \R^d \tto \R^d$ be a maximal monotone operator, and let $(\Omega,\mathcal{F},(\mathcal{F}_t),\mathbb{P})$ be a filtered probability space. We consider the following stochastic differential inclusion:
\begin{equation}\label{MainSDI_General}
    dX_t \in -A(X_t)\,dt+ F(t,X_t)\,dt + \sigma(t,X_t)\,dB_t, 
    \qquad X_0 \in \cl \dom A,
\end{equation}
where $\sigma \colon \Omega\times \R_+ \times \R^d \to \R^{d\times \ell}$, $F\colon \Omega \times \R_+ \times \R^d \to \R^d$ are mappings, and $B$ is an $\ell$-dimensional Brownian motion adapted to the filtration $(\mathcal{F}_t)_{t\geq 0}$.  

We now introduce the concept of solution that will be used in this work.  

 \begin{definition}\label{Def_solutionSDI}
A triplet $(X,Y,M)$ of $\mathbb{R}^d$-valued processes is called a \emph{solution} of \eqref{MainSDI_General} if:
\begin{enumerate}[label=\alph*)]
    \item $X$, $Y$, and $M$ are continuous progressively measurable processes with $Y_0=0$ and $M_0=0$.
    
    \item $X_t \in \overline{\operatorname{dom} A}$ for all $t \geq 0$, $\mathbb{P}$-a.s., the mapping $(\omega,t)\mapsto \sigma(\omega, t,X_t(\omega))$ is progressively measurable, and for all $T>0$
    \begin{align*}
     \int_0^T F(t,X_t) dt < \infty \text{ and }  \int_0^T \|\sigma(t,X_t)\|^2 dt < \infty 
       \quad \mathbb{P}\text{-a.s.}
    \end{align*}

    \item $Y$ has paths of bounded variation on compact intervals. Moreover, for every pair of continuous $(\mathcal{F}_t)$-adapted processes $(\alpha,\beta)$ with $(\alpha_t,\beta_t)\in\operatorname{gph} A$ for all $t \geq 0$, the process
    \begin{equation}\label{mono-Yt}
        \langle X_t - \alpha_t, dY_t - \beta_t dt \rangle
    \end{equation}
    defines a nonnegative measure on $\mathbb{R}_+$. Equivalently, for all $0 \leq a \leq b$,
    \begin{equation}\label{measure-prop-solution}
        \int_a^b \langle X_t - \alpha_t, dY_t \rangle 
        \geq \int_a^b \langle X_t - \alpha_t, \beta_t \rangle dt,
        \qquad \mathbb{P}\text{-a.s.}
    \end{equation}

    \item $M$ is a continuous local martingale such that
    \begin{equation}\label{MPerp}
        M_t \perp \operatorname{dom} A,   \qquad \forall t \geq 0,\ \mathbb{P}\text{-a.s.}
    \end{equation}

    \item The decomposition
    \begin{equation}\label{eqn-XY}
        X_t = X_0-Y_t + M_t + \int_0^t F(s,X_s)ds + \int_0^t \sigma(s,X_s) dB_s, 
        \qquad t \geq 0,
    \end{equation}
    holds $\mathbb{P}$-a.s.
\end{enumerate}
When there is no risk of ambiguity, we will simply say that $X$ is a solution to \eqref{MainSDI_General}.
\end{definition}

 Stochastic differential inclusions has been investigated by several authors (see, e.g., \cite{MR3308895} and the references therein). To the best of our knowledge, however, all existing works rely on the assumption that $\dom A$ has nonempty interior. In that case, the decomposition in \eqref{eqn-XY} involves only the bounded-variation process $Y_t$.  

Our approach relaxes this assumption by allowing the presence of the martingale term $M$ in \eqref{eqn-XY}. From a geometric perspective, $M$ captures random fluctuations in directions orthogonal to the affine hull of $\dom A$, induced by the noise in the system. If we denote by 
\[
    L := \operatorname{span}(\dom A - \dom A), 
\]
 then $M_t \in L^\perp$. Furthermore, since $X_t - X_0 \in L$, it follows from \eqref{eqn-XY} that for every $v \in L^\perp$,
\begin{align}\label{eq001}
   \langle Y_t - \int_0^t F(s,X_s)ds  , v\rangle = \langle M_t , v\rangle + \left\langle \int_0^t \sigma(s,X_s)\,dB_s , v \right\rangle.  
\end{align}
In \eqref{eq001}, the left-hand side is a process of bounded variation, while the right-hand side is a local martingale. By \cite[Theorem~4.8]{MR3497465}, we deduce that $\langle Y_t - \int_0^t F(s,X_s)ds  , v\rangle \equiv 0$. Since $v \in L^\perp$ was arbitrary and $L^\perp$ is separable, we conclude that $$Y_t - \int_0^t F(s,X_s)ds  \in L^{\perp \perp } = L.$$ Particularly, using \eqref{eqaff}, we conclude that 
\begin{align}\label{YbelongAFF}
Y_t - \int_0^t F(s,X_s)ds \in \aff (\dom A) -X_0  \quad  \forall t \geq 0,\; \mathbb{P}\text{-a.s.}
\end{align}
   
Furthermore, multiplying \eqref{eqn-XY} by $P^\ast$, the orthogonal projection onto $L$, we obtain
\begin{equation}\label{eqn-XY_Auxiliar}
X_t
  = X_0 - Y_t
  + \int_0^t  F(s,X_s)\,ds
  + \int_0^t P^\ast \sigma(s,X_s)\,dB_s .
\end{equation}
We will use this auxiliary identity throughout the paper, since it allows us to apply It\^o's formula to the process $X_t$.

It is worth emphasizing that if $\dom A$ has nonempty interior, then condition \eqref{MPerp} directly enforces $M_t \equiv 0$, thereby recovering the classical definition adopted in the literature (see, e.g., \cite{MR3308895}). On the other hand, the following example shows that in the general case the martingale component cannot be omitted.

\begin{example}
Consider the convex function
\[
    f(x) :=
    \begin{cases}
        0, & \text{if } x = 0, \\
        \infty, & \text{if } x \neq 0,
    \end{cases}
\]
and let $A := \partial f$, so that $\dom A=\{0\}$. Take $F\equiv 0$, and  the diffusion coefficient $\sigma(t,x) \equiv 1$.  
In this case, any solution must satisfy $X_t \in \cl{\dom A} = \{0\}$ for all $t \geq 0$. Then \eqref{eqn-XY} reduces to
\[
    0 = -Y_t + M_t + B_t.
\]
By \cite[Theorem~4.8]{MR3497465}, this implies $M_t = -B_t$ and $Y_t \equiv 0$.  
\end{example}

Now, we turn into the existence and uniqueness of   \eqref{MainSDI_General}.

\begin{theorem}\label{Theo:EU}
 Let $A \colon \mathbb{R}^d \rightrightarrows \mathbb{R}^d$ be   a maximal monotone operator. Consider $\sigma \colon \Omega \times  \R_+ \times \mathbb{R}^d \to \mathbb{R}^{d\times \ell}$ and $F\colon  \Omega \times \R_+ \times \R^d \to \R^d$  be   mapping such that $\sigma (\cdot, \cdot,  x)\colon \Omega \times \R_+   \to \mathbb{R}^{d\times \ell}$ and $F(\cdot,\cdot, x) \colon  \Omega \times \R_+ \to \R^d $ are   progressively measurable stochastic process for every $x\in \mathbb{R}^d$. Furthermore, let us suppose that there exists $\kappa \in L^2_{\mathrm{loc}}(\R_+; \R)$  and $\zeta \in L^1_{\mathrm{loc}} (\R_+; \mathbb{R})$ such that  $d\mathbb{P}\otimes dt$-almost every $(\omega,t)\in \Omega \times \mathbb{R}_+$
\begin{enumerate}[label=\alph*)]  
   \item $x \mapsto F(t,x) $ is continuous. 
   \item  $  
        \langle F(t,x) - F(t,y) , x - y \rangle \leq \zeta(t) \| x - y\|^2, \text{ for all } x,y\in \R^d.
        $ 
        \item   For all $r  >0$ we have that $\int_0^t \sup\{ \| F(s,u)\| :\|u\| \leq r\} ds < \infty$.  
        \item  $  
           \|\sigma(  t,x)-\sigma( t,y)\| \leq \kappa(t)\|x-y\|, \quad \forall x,y\in\mathbb{R}^d.
        $  
         \item $\int_0^t \|\sigma (s,0)\|^2 ds<\infty$
       \item For all $r>0$ 
        \begin{align*}
        \lim_{ \delta \to 0^+}\mathfrak{s}(\delta, r,t) =0,  
        \end{align*}
        where $$\mathfrak{s}(\delta,r,t):= \sup\limits_{y \in C([0,t];\R^d), \| y\|_\infty \leq r} \left\{  \int_0^t \| F(s + \delta, y(s) ) - F(s,y(s)) \| ds \right\}$$

\end{enumerate}

If $X_0 $ is an $\mathcal{F}_0$-measurable mapping with $X_0 \in  \cl\operatorname{dom}(A)$, then the SDI \eqref{MainSDI_General} has a unique solution $(X,Y,M)$ in the sense of Definition~\ref{Def_solutionSDI}. 
\end{theorem}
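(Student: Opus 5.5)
The plan is to reduce the problem to the classical setting in which the domain has nonempty interior, by working inside the affine hull of $\dom A$. Fix $a\in \operatorname{rint}(\dom A)$; such a point exists because the domain is nearly convex. Let $L:=\operatorname{span}(\dom A-\dom A)$, and let $P\colon L\to\R^d$ be the inclusion $Pz=a+z$ (affine), so that $P^\ast$ is the orthogonal projection onto $L$. Define $\tilde A\colon L\tto L$ by $\tilde A(z):=P^\ast A(a+z)$. By Theorem~\ref{thm:max_mono_comp}, applied after translating $A$ by $a$, $\tilde A$ is maximal monotone on the Hilbert space $L$. Its domain $\dom A-a$ has nonempty interior relative to $L$.

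We then consider the SDI on $L$
\begin{equation*}
dZ_t\in-\tilde A(Z_t)\,dt+P^\ast F(t,a+Z_t)\,dt+P^\ast\sigma(t,a+Z_t)\,dB_t,\qquad Z_0=X_0-a\in \cl\dom\tilde A .
\end{equation*}
The coefficients inherit hypotheses a)--f), since $P^\ast$ is a contraction. So we may invoke the classical existence and uniqueness theory for multivalued SDEs with maximal monotone drift whose domain has nonempty interior (Cépa's theorem, in the version with a one-sided Lipschitz, time-dependent drift as in \cite{MR3308895}). This gives a unique pair $(Z,\tilde Y)$ with $\tilde Y$ of bounded variation, satisfying the monotonicity measure condition with respect to $\operatorname{gph}\tilde A$. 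If the cited result does not directly cover the random time-dependent $F$, I would rerun the standard Yosida approximation scheme on $L$. Replace $\tilde A$ by $\tilde A_\lambda$, solve the resulting Lipschitz SDE, and obtain uniform bounds. The interior point is used through the inequality $\langle \tilde A_\lambda z, z-z_0\rangle\ge r\|\tilde A_\lambda z\|-C$, which controls the total variation. Hypothesis f) is used to handle time regularity of $F$ in the Cauchy/compactness argument. Finally, pass to the limit $\lambda\to0$. I expect this step to be the main obstacle: verifying that the assumptions on $F$ are exactly what the approximation needs (stopping-time localization via c), and the time-shift estimate via f)).

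Next we lift back to $\R^d$ by setting
\begin{equation*}
X_t:=a+Z_t,\qquad Y_t:=\tilde Y_t+\int_0^t (I-P^\ast)F(s,X_s)\,ds,\qquad M_t:=-\int_0^t (I-P^\ast)\sigma(s,X_s)\,dB_s .
\end{equation*}
Then $M$ is a continuous local martingale taking values in $L^\perp$, so $M_t\perp\dom A$, and the decomposition \eqref{eqn-XY} holds by construction. For the monotonicity condition, take adapted $(\alpha,\beta)$ with $\beta_t\in A(\alpha_t)$. Then $P^\ast\beta_t\in\tilde A(\alpha_t-a)$, and $X_t-\alpha_t\in L$. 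It follows that
\[
\langle X_t-\alpha_t,dY_t-\beta_tdt\rangle=\langle Z_t-(\alpha_t-a),d\tilde Y_t-P^\ast\beta_tdt\rangle+\langle X_t-\alpha_t,(I-P^\ast)(F\,dt-\beta_t\,dt)\rangle .
\]
The last term vanishes because $X_t-\alpha_t\in L$, so the first term is nonnegative. Conversely, the graph of $\tilde A$ is exactly the image of the graph of $A$ under $(x,u)\mapsto(x-a,P^\ast u)$; this uses the observation that $A(x)+L^\perp=A(x)$ for maximal monotone $A$.

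For uniqueness, take a solution $(X,Y,M)$ of \eqref{MainSDI_General}. By the argument leading to \eqref{YbelongAFF} and \eqref{eqn-XY_Auxiliar}, $Z=X-a$ together with $P^\ast Y$ solves the reduced SDI on $L$. Moreover, $Y$ and $M$ are recovered from $X$ via $(I-P^\ast)Y=\int(I-P^\ast)F$ and $M=-\int(I-P^\ast)\sigma\,dB$. Uniqueness therefore follows from uniqueness for the reduced problem. That in turn follows from Itô's formula applied to $\|X_t-X'_t\|^2$ using \eqref{eqn-XY_Auxiliar}: monotonicity of the $dY$ terms, b), d), and a stopped Gronwall argument (Lemma~\ref{lemma-comparison}).
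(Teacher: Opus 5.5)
Your proposal follows the paper's proof essentially step by step. It reduces to $L=\operatorname{span}(\dom A-\dom A)$ via Theorem~\ref{thm:max_mono_comp}, gets existence for the $L$-valued inclusion from the nonempty-interior theory \cite[Theorem~4.19]{MR3308895}, and lifts back with exactly the paper's processes $Y_t=\tilde Y_t+\int_0^t(I-P^\ast)F(s,X_s)\,ds$ and $M_t=-\int_0^t(I-P^\ast)\sigma(s,X_s)\,dB_s$; translating by a point of $\operatorname{rint}\dom A$ rather than of $\dom A$ is immaterial. The only variation is in uniqueness: you reduce to uniqueness of the $L$-valued problem using $A(x)+L^\perp=A(x)$, whereas the paper runs the It\^o--Gr\"onwall estimate directly in $\R^d$, taking the cross-monotonicity $\int_a^b\langle X^1_t-X^2_t,dY^1_t-dY^2_t\rangle\ge 0$ from \cite[Proposition~6.17]{MR3308895}. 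Both routes are fine.
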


\begin{proof}
Without loss of generality, we may assume that $0 \in \dom A$. Otherwise, we consider the SDI associated with the maximal monotone operator
$\tilde{A}(x) := A(x + u_0)$ for some (deterministic) $u_0 \in \dom A$, together
with the corresponding translated mappings
$\tilde{F}(t,u) := F(t,u + u_0)$ and
$\tilde{\sigma}(t,u) := \sigma(t, u + u_0)$ and the initial condition $\tilde{X}_0 = X_0 - u_0$.
In this case, 
\[
    \operatorname{span}(\dom A) = \aff(\dom A).
\] 
Let $L := \operatorname{span}(\dom A)$, which is a Hilbert subspace of $\R^d$,  and let $P\colon  L \to \R^d$  the linear injection  from $L$ onto $\R^d$, which can be identified with a matrix since $L$ is a linear subspace.   It follows that $P^\ast \colon \R^d \to L$. Then, as a consequence of Theorem~\ref{thm:max_mono_comp}, the operator $T\colon  L \tto L$ given by
\[
    T(x) := P^\ast A(Px)
\]
is maximal monotone. Here, $P^\ast \colon \mathbb{R}^d \to L$ is nothing more than the projection onto $L$. Moreover, it is known that $\operatorname{rint}(\dom A) \neq \emptyset$ (see, e.g., \cite[Theorem~12.41]{MR1491362}).   By construction of $T$, we have
\[
    \operatorname{rint}(\dom A) = \operatorname{int}(\dom T),
\]
which implies that $\operatorname{int}(\dom T) \neq \emptyset$. Furthermore, it follows that $X_0 \in \cl(\dom T)$. 

Next, consider the auxiliary stochastic differential inclusion
\begin{equation}\label{MainSDI_aux}
    d\hat{X}_t \in -T(\hat{X}_t)\,dt + \hat{F}(t,X_t)dt +  \hat{\sigma}(t,\hat{X}_t)\,dB_t, 
    \qquad \hat{X}_0 \in \cl(\dom T),
\end{equation}
where $\hat{\sigma} (s,x) $ is the linear function defined by $\tilde{\sigma} \colon \Omega\times \R_+ \times L \to \mathcal{L} (\R^\ell, L)$ given by $\hat{\sigma} (s, x):= P^\ast\sigma(s,x) = P^\ast\sigma(s,Px) $ and $\hat{F} \colon \Omega\times \R_+ \times L \to L $ given by   $\hat{F}(t,x) =P^\ast F(t,x)$.

By \cite[Theorem~4.19]{MR3308895}, the SDI \eqref{MainSDI_aux} admits a unique solution $(\hat{X}, \hat{Y}, \hat{M})$ in the sense of \cite[Definition 4.2]{MR3308895}, which coincides with   Definition~\ref{Def_solutionSDI} for this setting, since   $\operatorname{int}(\dom T) \neq \emptyset$, which implies  that $\hat{M} \equiv 0$.  

Now, define the progressively measurable processes
\begin{align*}
   X_t   := \hat{X}_t,\; 
     Y_t := \hat{Y}_t+ \int_0^t f(s) ds, \text{ and }
    \; M_t  := \int_0^t \left( \hat{\sigma}(s,\hat{X}_s) -   \sigma(s,\hat{X}_s)\right) dB_s,
\end{align*}
 where $f(t):=    {F}(t,\hat{X}_t) -  \hat{F}(t,\hat{X}_t)$.
It follows that $(X,Y,M)$ is a solution of \eqref{MainSDI_General} in the sense of Definition~\ref{Def_solutionSDI}. Most of the requirements in Definition~\ref{Def_solutionSDI} are straightforward to verify; the only nontrivial point is condition~\eqref{mono-Yt}. So take $(\alpha_t,\beta_t)\in \gph A$. Since $(\alpha_t, P^\ast \beta_t)\in \gph T$, we obtain that

\begin{align*}
\int_{a}^b \langle X_t - \alpha_t , dY_t - P^\ast \beta_t dt\rangle   = & \int_{a}^b \langle X_t - \alpha_t , dY_t dt\rangle  - \int_{a}^b \langle X_t - \alpha_t , P^\ast \beta_t dt\rangle  \\=  &\int_{a}^b \langle X_t - \alpha_t , dY_t dt\rangle  - \int_{a}^b \langle X_t - \alpha_t , \beta_t dt\rangle   \\ =& \int_{a}^b \langle X_t - \alpha_t , d\hat{Y}_t dt\rangle    + \int_{a}^b \langle X_t - \alpha_t , f(t)\rangle dt \\ &  - \int_{a}^b \langle X_t - \alpha_t , \beta_t dt\rangle \\
 =&\int_{a}^b \langle X_t - \alpha_t , dY_t - \beta_t dt\rangle \geq 0,
\end{align*}  
and that shows the existence of the solution.

We now prove uniqueness.  
Let $(X^i,Y^i,M^i)$, $i=1,2$, be two solutions of the SDI \eqref{MainSDI_General}.  
Using \eqref{eqn-XY_Auxiliar}  we get
\begin{equation}\label{eqwithProj}
    X^i_t = X_0 - Y^i_t + \int_0^t F(s,X_s^i) ds +  \int_0^t P^\ast \sigma(s,X^i_s)\,dB_s, 
    \qquad i=1,2. 
\end{equation} 
Furthermore, let us notice that by \cite[Proposition 6.17]{MR3308895}  we have that for all $0\leq a \leq b$
\begin{align*}
    \int_a^b \langle X_t^1 - X_t^2 , dY_t^1 - dY_t^2 \rangle \geq 0, \mathbb{P}\text{-a.s. } 
\end{align*}
Hence, using Itô’s formula, and taking expectations, we deduce that there exists a constant $C>0$ such that 
\[
      \mathbb{E}\|X^1_t - X^2_t\|^2 
      \leq C \int_0^t \left( \zeta(s) +  \kappa(s)^2 \right) \mathbb{E}\|X^1_s - X^2_s\|^2 \, ds.
\]
By Grönwall's inequality, this yields $X^1_t = X^2_t$.  
Substituting into \eqref{eqwithProj}, we further obtain $Y^1_t = Y^2_t$.  
Finally, recalling \eqref{eqn-XY},  it follows that $M^1_t = M^2_t$.
This proves the uniqueness of the solution.
\end{proof}

\begin{example}
    Take the maximal monotone operator $A\colon \R^2\tto \R^2$ given by $$A(x,y) =\begin{cases}
    \{x\}\times \R : y=0,\\
    \emptyset : y\neq 0.
    \end{cases}$$ This operator is the subdifferential of the convex function $$f(x,y) = \begin{cases}\frac{1}{2}x^2 : y=0,\\
    \infty : y\neq 0.
    \end{cases}$$ which is proper, convex, and lower semicontinuous function. Note that $\dom A = \R\times \{0\}$, which is closed and has empty interior. Let $\sigma(t,x) = \frac{1}{1+t}$, $F = 0$ and choose an initial point $(x_0,0)\in \dom A$. The theorem above then yields processes $(X,Y)$ satisfying \begin{equation*}
        (X_t^1,X_t^2) = (x_0,0) - (Y_t^1,Y_t^2) + (M_t^1,M_t^2) + \left(\int_0^t\sigma(s)dB_s^1,\int_{0}^t \sigma(s) dB_s^2\right),
    \end{equation*}
    where $M := (M^1,M^2)$ is a martingale orthogonal to $\dom A$, hence $M_t^1\equiv 0$. Because $X_t\in \dom A$ for all $t$, we must have $X_t^2\equiv 0$. Consequently, $Y_t^2 = M_t^2 + \int_{0}^t \sigma(t) dB_t^2$. Since $Y^2$ has bounded variation while the right hand side is a martingale, we conclude that $Y_t^2 = 0$ and $M_t^2 = -\int_{0}^t \sigma(s) dB_s^2$. Next, observe that $X_t^1 = x_0 - Y_t^1 + \int_0^t \sigma(s)dB_s^1$ and for all continuous processes $(\alpha_t,\beta_t)\in \gph A$ we have 
    \begin{equation*}
        \forall 0\leq a\leq b: \int_a^b \langle X_t-\alpha_t,dY_t\rangle\geq \int_a^b\langle X_t-\alpha_t,\beta_t\rangle dt
    \end{equation*}
    and it can be reduced to
    \begin{equation*}
        \forall 0\leq a\leq b: \int_a^b (X_t^1-\alpha_t^1)dY_t^1\geq \int_a^b (X_t^1-\alpha_t^1)\alpha_t^1dt
    \end{equation*}
    where $\alpha_t^1\colon \R\to \R$ is any continuous function. Choosing $\alpha_t^1 := X_t^1 - c$ with $c\in \R$, we obtain 
    \begin{equation*}
        \forall t\geq 0, \forall c\in \R : cY_t^1\geq c\int_0^t (X_s^1-c)ds
    \end{equation*}
    and inspecting the cases $c>0$ and $c<0$ shows that the only possibility is $Y_t^1 = \int_0^t X_t^1 dt$ and then $X_t^1 = x_0-\int_0^t X_t^1dt +\int_0^t \sigma(t)dB_t^1$. Solving this linear stochastic equation gives
    \begin{equation*}
        X_t^1 = e^{-t} x_0 + e^{-t}\int_0^t e^{s} \sigma(s)dB_s^1.
    \end{equation*}
\end{example}
\section{SDI related to monotone differential inclusions}\label{Section4}
In this section, we consider the SDI \eqref{MainSDI}, for a given maximal monotone operator $A\colon \mathbb{R}^d \to \mathbb{R}^d$. 
The existence and uniqueness of solutions to the SDI \eqref{MainSDI} can be guaranteed by Theorem~\ref{Theo:EU}. 
Nevertheless, our main goal in this section is to understand the convergence of solutions of such a system toward solutions of the inclusion $0 \in A(x)$. 
To this end, we impose some conditions on the volatility coefficient $\sigma$ in the SDI \eqref{MainSDI}. 
Formally, we assume that

 \begin{assumption}\label{assumption01}
    Consider $\sigma \colon \Omega \times \R_+ \times \mathbb{R}^d \to \mathbb{R}^{d\times \ell}$. We may assume that 
\begin{enumerate}[label=\alph*)]
    \item The mapping $\sigma (\cdot,\cdot,  x): \Omega\times \R_+   \to \mathbb{R}^{d\times \ell}$ is a progresively measurable stochastic process for every $x\in \mathbb{R}^d$.
    \item  There is a function $\kappa\in L^2_{loc}(\R_+;\R)$ such that for all $x,y\in \R^d$, $t\in \R_+$
    \begin{equation*} 
        \|\sigma(t,x)-\sigma(t,y)\|\leq \kappa(t)\|x-y\|.
    \end{equation*}
    \item $\mathbb{E} \left( \int_0^\infty  \sigma^2_\infty (t)  dt\right)  < \infty$, where $$ \sigma_\infty(t) := \sup_{x\in \R^d}\|\sigma(t,x)\| $$
    \end{enumerate}
 \end{assumption}
    It is straightforward to verify that under assumption \eqref{assumption01} the existence and uniqueness of the SDI \eqref{MainSDI} is given by  Theorem~\ref{Theo:EU}.
    
Now, let us turn into the investigation of  the asymptotic behaviour of the unique solution 
of the SDI~\eqref{MainSDI} under Assumption~\eqref{assumption01}. Let us denote
\[
    \mathcal{S} := \{x \in \mathbb{R}^d : 0 \in A(x)\}.
\]
Since $A$ is maximal monotone, the set $\mathcal{S}$ is convex and closed.

    \begin{theorem}\label{main-thm-max-mon}
   Let $A \colon \mathbb{R}^d \rightrightarrows \mathbb{R}^d$  be  a maximal monotone operator with $\mathcal{S}\neq \emptyset$. Consider $\sigma \colon \Omega \times  \R_+ \times \mathbb{R}^d \to \mathbb{R}^{d\times \ell}$  satisfying  Assumption~\ref{assumption01}, let $X_0$ be an $\mathcal{F}_0$-measurable, square-integrable random variable, and let $X$ denote the unique solution of \eqref{MainSDI}. Then,

    	\begin{enumerate}[label = (\alph*)]
            \item For all $x^\star\in \mathcal{S}$ and $t,s\geq 0$ with $s<t$,
    		\begin{equation*}
                \begin{aligned}
                    &\mathbb{E}(\|X_t-x^\star\|^2)\\
                    &\leq \mathbb{E}(\|X_s-x^\star\|^2) -2\mathbb{E}\left(\int_s^t\langle X_\tau-x^\star,dY_\tau\rangle\right)+\mathbb{E} \left(\int_s^t\sigma_\infty(\tau)^2d\tau \right).
                \end{aligned}
    		\end{equation*}
    		\item For all $x^\star\in \mathcal{S}$, $\displaystyle\mathbb{E}\left(\sup_{t\geq 0}\|X_t-x^\star\|^2\right)<\infty$.
    		\item $\mathbb{P}$-a.s., for all $x^\star\in \mathcal{S}$, $\displaystyle\lim_{t\to \infty}\|X_t-x^\star\|$ exists. Moreover, for all  $x^\star\in \mathcal{S}$, 
            \begin{equation}\label{conv-L2-zeroe}
                \displaystyle\lim_{t\to\infty}\mathbb{E}(\|X_t-x^\star\|^2) = \mathbb{E}(\lim_{t\to \infty}\|X_t-x^\star\|^2).
            \end{equation}
    		\item There is a random variable $X^\star\in L^2_{\mathbb{P}}(\Omega;\R^d)$ such that $\mathbb{P}$-a.s. $\displaystyle\lim_{t\to \infty}\Aver{X}{t}  = X^\star$ with $\mathbb{P}(X^\star\in \mathcal{S})=1$, and $\displaystyle\lim_{t\to \infty}\mathbb{E}(\|\Aver{X}{t}-X^\star\|^2) = 0$.
    		% \item If $A$ is $\rho$-strongly maximal monotone with $\rho>0$, then $\mathcal{S} = \{x^\star\}$ and $\mathbb{P}$-a.s. $\displaystyle\lim_{t\to \infty}X_t = x^\star$ and $\displaystyle\lim_{t\to \infty}\mathbb{E}(\|X_t-x^\star\|^2)=0$.
    	\end{enumerate}
    \end{theorem}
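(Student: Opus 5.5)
The plan is to run a Lyapunov argument with $\|X_t-x^\star\|^2$ through the projected identity \eqref{eqn-XY_Auxiliar} (with $F\equiv 0$), which writes $X_t = X_0 - Y_t + \int_0^t P^\ast\sigma(s,X_s)\,dB_s$ with $P^\ast$ the orthogonal projection onto $L$. Fix $x^\star\in\mathcal S$. Since $Y$ has bounded variation and the stochastic integral is a continuous local martingale, It\^o's formula gives, for $s<t$,
\begin{equation*}
\|X_t-x^\star\|^2 = \|X_s-x^\star\|^2 - 2\int_s^t\langle X_\tau-x^\star,dY_\tau\rangle + 2\int_s^t\langle X_\tau-x^\star,P^\ast\sigma(\tau,X_\tau)dB_\tau\rangle + \int_s^t\|P^\ast\sigma(\tau,X_\tau)\|^2d\tau .
\end{equation*}
Applying \eqref{measure-prop-solution} with the constant pair $(\alpha_t,\beta_t)=(x^\star,0)\in\gph A$ shows that $R_t:=\int_0^t\langle X_\tau-x^\star,dY_\tau\rangle$ is nondecreasing. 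The last term is bounded by $\int_s^t\sigma_\infty^2$, because $\|P^\ast\|\le1$. For (a), I would localize with stopping times $\tau_n$ at which $\|X\|$ exceeds $n$, so that the martingale term has zero expectation. Then let $n\to\infty$, using Fatou on the left side, monotone convergence for the $dY$ term, and Assumption~\ref{assumption01}(c) for the $\sigma$ term. For the passage on the right, I would first establish (b), or else run the argument directly on the stopped quantities.

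For (b), drop the nonpositive term $-2R_t$ and take $\sup_{t\le T}$. The martingale $N_t=2\int_0^t\langle X-x^\star,P^\ast\sigma\,dB\rangle$ is controlled by Burkholder--Davis--Gundy:
\begin{equation*}
\mathbb E\sup_{t\le T}|N_t|\le C\,\mathbb E\Big(\sup_{t\le T}\|X_t-x^\star\|^2\int_0^T\sigma_\infty^2\Big)^{1/2}\le \tfrac12\mathbb E\sup_{t\le T}\|X_t-x^\star\|^2 + C'\,\mathbb E\int_0^\infty\sigma_\infty^2 .
\end{equation*}
The first term can be absorbed (after localization, to make it finite), and letting $T\to\infty$ gives a bound uniform in $T$. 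The same computation shows that $\mathbb E\,R_\infty<\infty$, so $R_\infty<\infty$ a.s.

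For (c), write $\|X_t-x^\star\|^2 + 2R_t = \|X_0-x^\star\|^2 + N_t + \int_0^t\|P^\ast\sigma\|^2$. By (b), $N$ is an $L^2$-bounded, hence uniformly integrable, martingale, since $\mathbb E\langle N\rangle_\infty\le 4\,\mathbb E\big(\sup\|X-x^\star\|^2\int\sigma_\infty^2\big)$. If that product is awkward, I would use the a.s. convergence of a continuous local martingale on $\{\langle N\rangle_\infty<\infty\}$ instead, and $\langle N\rangle_\infty<\infty$ a.s. holds by (b). In either case $N_t$ converges a.s., $R_t$ is monotone and bounded, and the integral converges, so $\lim_t\|X_t-x^\star\|$ exists a.s. To get this simultaneously for all $x^\star\in\mathcal S$, take a countable dense subset $D\subset\mathcal S$ and use $\big|\|X_t-x\|-\|X_t-y\|\big|\le\|x-y\|$. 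The $L^1$ convergence \eqref{conv-L2-zeroe} then follows by dominated convergence, with dominating function $\sup_t\|X_t-x^\star\|^2$ from (b).

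For (d), I would apply Lemma~\ref{opial-prop} pathwise. The main obstacle is showing that the cluster points of $\Aver{X}{t}$ lie in $\mathcal S$ a.s. I would first try the demipositivity-type inequality: for an arbitrary $(u,v)\in\gph A$, apply It\^o's formula to $\|X_t-u\|^2$ and \eqref{measure-prop-solution} with the constant pair $(u,v)$, which gives
\begin{equation*}
2\int_0^t\langle X_\tau-u,v\rangle d\tau \le \|X_0-u\|^2-\|X_t-u\|^2 + N^u_t + \int_0^t\sigma_\infty^2 .
\end{equation*}
Dividing by $t$, the right side tends to $0$ a.s. because $N^u_t$ converges and everything else is bounded. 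Hence any cluster point $\bar x$ satisfies $\langle \bar x-u,v\rangle\le0$. This inequality has to hold for all $(u,v)$ simultaneously, so I would restrict to a countable dense subset of $\gph A$ and pass to the whole graph by continuity. Maximal monotonicity then gives $0\in A(\bar x)$. Lemma~\ref{opial-prop} yields the a.s. limit $X^\star$, and measurability holds because $X^\star$ is a pointwise limit. Finally, $\|\Aver{X}{t}-x^\star\|^2\le\sup_s\|X_s-x^\star\|^2$, which is integrable by (b), so dominated convergence gives the $L^2$ convergence and $X^\star\in L^2_{\mathbb P}$.
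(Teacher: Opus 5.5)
Your proof is correct and follows the paper's skeleton: It\^o's formula on $\|X_t-x^\star\|^2$ through \eqref{eqn-XY_Auxiliar}, monotonicity of $R_t$ from the constant pair $(x^\star,0)$, localized BDG with absorption for (b), a countable dense subset of $\mathcal S$ for (c), and maximal monotonicity with Lemma~\ref{opial-prop} for (d). Two steps are organized differently.

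For the a.s.\ part of (c), the paper does not go through (b). It applies the convergence theorem for nonnegative processes of the form ``initial value plus nondecreasing process minus nondecreasing process plus local martingale'' \cite[Theorem 1.3.9]{MR2380366}. This gives $\lim_t\|X_t-x^\star\|$ and $R_\infty<\infty$ pathwise, and would still work if $\int_0^\infty\sigma_\infty^2<\infty$ were only assumed a.s. You instead prove (b) first and then only need that a continuous local martingale converges on $\{\langle N\rangle_\infty<\infty\}$. That is more elementary, but it is tied to the moment bound. Use that fallback rather than your first option: (b) alone does not make $N$ bounded in $L^2$ when $\sigma_\infty$ is random, since $\mathbb E\bigl(\sup_t\|X_t-x^\star\|^2\int_0^\infty\sigma_\infty^2\bigr)$ may be infinite. (BDG plus Cauchy--Schwarz does give $\mathbb E\sup_t|N_t|<\infty$, which also suffices.)

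For (d), the paper splits $\int_0^t\langle u-X_s,dY_s\rangle$ through $x^\star$ and substitutes $Y_t=X_0-X_t+\int_0^tP^\ast\sigma\,dB$, so it needs $R_\infty<\infty$. Your It\^o computation on $\|X_t-u\|^2$, dropping $-\|X_t-u\|^2$, avoids this. You also deal explicitly, via a countable dense subset of $\gph A$, with the fact that the null set in \eqref{measure-prop-solution} depends on $(u,v)$; the paper's proof leaves this implicit. Finally, you carry out the dominated-convergence step giving $\mathbb E\|\Aver{X}{t}-X^\star\|^2\to0$, whereas the paper's proof only verifies $X^\star\in L^2_{\mathbb P}$.
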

    \begin{proof}
Let us denote by $L: =\operatorname{span}(\dom A  - \dom A)$, and denote by $P^\ast \colon\mathbb{R}^d \to L$ the  projection onto $L$. Let  $(X,Y,M)$ be the triplet  of $\mathbb{R}^d$-valued processes, which are the solution of  \eqref{MainSDI}.   Consider $x^\star \in \mathcal{S}$. It follows by observation \eqref{YbelongAFF} that
    \begin{equation*}
    X_t = X_0-Y_t   + \int_0^t P^\ast\sigma(s,X_s) dB_s,  
    \end{equation*}
    Hence, by Itô's formula, we have for $s\in \R_+$ and $t\geq s$
    \begin{equation}\label{eqn-ito11}
        \begin{aligned}
            \frac{1}{2}\|X_t-x^\star\|^2 = & \ \frac{1}{2}\|X_s-x^\star\|^2 + \int_s^t \langle X_\tau-x^\star,dX_\tau\rangle + \frac{1}{2}\int_s^t\|P^\ast\sigma(\tau,X_\tau)\|^2 d\tau\\
             = & \ \frac{1}{2}\|X_s-x^\star\|^2 + \int_s^t \langle X_\tau-x^\star,-dY_\tau\rangle + \frac{1}{2}\int_s^t\|P^\ast\sigma(\tau,X_\tau)\|^2 d\tau \\
             & +\int_s^t \langle X_\tau-x^\star,P^\ast\sigma(\tau,X_\tau)dB_\tau\rangle
        \end{aligned}
    \end{equation}
    Since $0\in A(x^\star)$, we have \eqref{mono-Yt} yields
    \begin{equation*}
    	\mathcal{Y}_t:= \int_0^t\langle X_s-x^\star,dY_s \rangle
    \end{equation*}
    is a nondecreasing process and $\mathcal{Y}_t\geq 0, $  for all $ t\geq 0$. Consider   $\mathcal{X}_t := \frac{1}{2}\|X_t-x^\star\|^2$, then $\mathcal{X}_t\geq 0$, for  $t\geq 0$, and define 
    \begin{align*}
        \xi:=\frac{1}{2}\|X_0 - x^\star\|^2, \quad A_t :=  \frac{1}{2}\int_0^t\|P^\ast\sigma(s,X_s)\|^2ds, 
    \end{align*}
    \begin{align*}
        \mathscr{M}_t := \int_0^t \langle X_s-x^\star,P^\ast\sigma(s,X_s)dB_s\rangle. 
    \end{align*}
    It is clear that $(\mathscr{M}_t)_{t\geq 0}$ is a local martingale, and we have
    \begin{equation*}
       \mathcal{X}_t = \xi + A_t -\mathcal{Y}_t + \mathscr{M}_t, \text{ for all } t\geq 0.
    \end{equation*}
    Since $\sigma_\infty$ is square integrable, we have $\displaystyle\lim_{t\to \infty} A_t<\infty$ $\mathbb{P}$-a.s.,  by using \cite[Theorem 1.3.9]{MR2380366} it holds $\mathbb{P}$-a.s. $\displaystyle\lim_{t\to \infty} \mathcal{X}_t$ and $\displaystyle\lim_{t\to \infty} \mathcal{Y}_t$ exist; Specifically:
    \begin{equation*}
		\mathbb{P}\text{-a.s. }\lim_{t\to\infty} \|X_t-x^\star\|^2 \text{ exists and it is finite, and }\int_0^\infty \langle X_s-x^\star, dY_s\rangle<\infty.
    \end{equation*}
    On the other hand, we have for all $t\geq 0$ and $\mathbb{P}$-a.s.
    \begin{equation*}
        \frac{1}{2}\|X_t-x^\star\|^2 \leq \frac{1}{2}\|X_0-x^\star\|^2 + A_t + \mathscr{M}_t.
    \end{equation*}
    Take the stopping time $\tau_n := \inf\{ t\geq 0 : \|X_t\|\geq n \}$, then we have 
    \begin{equation*}
        \begin{aligned}
            \sup_{s\leq t}\|X_{s\wedge \tau_n}-x^\star\|^2 &\leq \|X_0-x^\star\|^2 + 2\sup_{s\leq t}A_{s\wedge\tau_n} + 2\sup_{s\leq t}\mathscr{M}_{s\wedge\tau_n}\\
            &\leq \|X_0-x^\star\|^2 + 2\int_0^\infty\sigma_\infty(s)^2 ds + 2\sup_{s\leq t}\mathscr{M}_{s\wedge\tau_n}\\
        \end{aligned}
    \end{equation*}
    Taking expectation, we get
    \begin{equation*}
            \mathbb{E}(\sup_{s\leq t}\|X_{s\wedge \tau_n}-x^\star\|^2) \leq \mathbb{E}(\|X_0-x^\star\|^2) + 2\mathbb{E}\left(\int_0^\infty\sigma_\infty(s)^2 ds\right) + 2\mathbb{E}(\sup_{s\leq t}\mathscr{M}_{s\wedge\tau_n})
    \end{equation*}
  By Burkholder-Davis-Gundy inequality (see, e.g., \cite[Theorem 3.28]{MR1121940}), there exists a (universal)  constant $\mathcal{K}>0$ 
    \begin{equation}\label{eqBDG}
        \begin{aligned}
            \mathbb{E}(\sup_{s\leq t}\mathscr{M}_{s\wedge\tau_n}) &\leq \mathcal{K}\mathbb{E}(\|\sigma_\infty\|_2\cdot \sup_{s\leq t}\|X_{s\wedge\tau_n}-x^\star\|)\\
            &\leq \mathbb{E}\left(\mathcal{K}\|\sigma_\infty\|_2\cdot \left(\frac{1}{4 \mathcal{K}\|\sigma_\infty\|_2}\sup_{s\leq t}\|X_{s\wedge\tau_n}-x^\star\|^2 + \mathcal{K}\|\sigma_\infty\|_2\right)\right)\\
            &=\frac{1}{4}\mathbb{E}\left(\sup_{s\leq t}\|X_{s\wedge\tau_n}-x^\star\|^2\right) + \mathcal{K}^2\mathbb{E}\left(\|\sigma_\infty\|_2^2\right).
        \end{aligned}
    \end{equation}
    It follows that 
    \begin{equation*}
        \mathbb{E}(\sup_{s\leq t}\|X_{s\wedge \tau_n}-x^\star\|^2) \leq 2\mathbb{E}(\|X_0-x^\star\|^2) +4( \mathcal{K}^2+ 1)\mathbb{E}(\|\sigma_\infty\|_2^2).
    \end{equation*}
    By Fatou's lemma, taking $n\to\infty$ and then $t\to \infty$ we conclude that
    \begin{equation*}
        \mathbb{E}(\sup_{s\geq 0}\|X_s-x^\star\|^2)<\infty.
    \end{equation*}
    Then, \emph{(b)} holds. Moreover, observe that by taking expectation in \eqref{eqn-ito11}, we have \emph{(a)} holds since the expectation of the Brownian integral is zero since it is a martingale due to the integrability just proved. 
    
    We have proved that for all $x^\star\in \mathcal{S}$, $\mathbb{P}$-a.s. $\lim_{t\to \infty}\|X_s-x^\star\|$ exists. Our goal now is to prove a stronger result: $\mathbb{P}$-a.s. for all $x^\star\in \mathcal{S}$, $\displaystyle\lim_{t\to \infty}\|X_t-x^\star\|$ exists. To achieve this, we follow the argumentation of \cite{MR3361444}. Consider a countable dense subset of $\mathcal{S}$, denoted by $\mathcal{S}'$. For every $x^\star\in \mathcal{S}'$ we take $\Omega_{x^\star}\in \mathcal{F}$ such that $\mathbb{P}(\Omega_{x^\star}) = 1$ and for all $\omega \in \Omega_{x^\star}$, $\displaystyle\lim_{t\to \infty}\|X_t(\omega)-x^\star\|$ exists. Define $\Omega' := \bigcap_{x^\star\in \mathcal{S}'}\Omega_{x^\star}$, given that $\mathcal{S}'$ is countable we have $\Omega'\in \mathcal{F}$ and $\mathbb{P}(\Omega') = 1$. As a consequence, if $x^\star\in \mathcal{S}$, there is a sequence $(x_i^\ast)\subset\mathcal{S}'$ such that $x_i^\ast\to x^\star$, and we observe that for $\omega\in \Omega'$
    \begin{equation*}
    	\forall i\in \N, \forall t\geq 0 : \left|\|X_t(\omega)-x^\star\|-\|X_t(\omega)-x_i^\ast\|\right|\leq \|x_i^\ast-x^\star\|.
    \end{equation*}
    By using the last inequality we have for all $i\in \N$
    \begin{equation*}
    	\begin{aligned}
    		\limsup_{t\to \infty}\|X_t(\omega)-x^\star\|&\leq \lim_{t\to \infty}\|X_t(\omega)-x_i^\ast\|+\|x_i^\ast-x^\star\|\\
    		&\leq \liminf_{t\to \infty}\|X_t(\omega)-x^\star\|+2\|x_i^\ast-x^\star\|.
    	\end{aligned}
    \end{equation*}
    Then, taking $i\to \infty$ we conclude that $\displaystyle\lim_{t\to \infty}\|X_t(\omega)-x^\star\|$ exists. Furthermore, by \emph{(b)} and dominated convergence theorem, we obtain directly that \eqref{conv-L2-zeroe} holds, then \emph{(c)} is proved.
    
    Now, we are going to prove that $\Aver{X}{t} $ converges $\mathbb{P}$-a.s. to a random element of $\mathcal{S}$. First, define  $N_t := \int_0^t P^\ast\sigma(\tau,X_\tau)dB_\tau$, observe that each coordinate of $N$ is a martingale bounded in $L^2(\Omega)$ since $\sigma_\infty\in L^2(\R_+)$, then by the martingale convergence theorem (see, e.g., \cite[Theorem 1.3.5]{MR2380366}), there is a random variable $N_\infty\colon \Omega\to \R^d$, such that $\mathbb{P}$-a.s. $N_t\to N_\infty$ and $\displaystyle\lim_{t\to \infty}\mathbb{E}(\|N_t-N_\infty\|^2)=0$.

    Take $(u,v)\in \gph A$ and any $x^\star\in \mathcal{S}$. Consider the set $\Omega'\in \mathcal{F}$ such that for all $\omega\in \Omega'$: $\displaystyle\lim_{t\to \infty} N_t(\omega) = N_\infty(\omega)$, $\int_0^\infty\langle X_s(\omega)-x^\star,dY_s(\omega)\rangle<\infty$, $\displaystyle\lim_{t\to \infty}\|X_t(\omega)-x^\star\|$ exists and $\langle X_s(\omega)-u,dY_s(\omega)-vds\rangle$ is a positive measure. We know that $\mathbb{P}(\Omega') =1$. Let us take $\omega \in \Omega'$, then have for all $t\geq 0$
    \begin{equation*}
    	\int_0^t \langle u-X_s(\omega),v \rangle ds-\int_0^t\langle  u-X_s(\omega),dY_s(\omega)\rangle\geq 0.
    \end{equation*}
     Observe that for all $t> 0$
    \begin{equation*}
        \begin{aligned}
            &\left\langle u-\Aver{X}{t}(\omega) ,v\right\rangle
            = \frac{1}{t}\int_0^t\langle u-X_s(\omega),v\rangle ds
        \geq  \frac{1}{t}\int_0^t\langle u-X_s(\omega),dY_s(\omega)\rangle\\
        &= \frac{1}{t}\int_0^t\langle x^\star-X_s(\omega),dY_s(\omega)\rangle + \frac{1}{t}\langle u-x^\star,Y_t(\omega)+X_0(\omega)\rangle\\
        & = \frac{1}{t}\int_0^t\langle x^\star-X_s(\omega),dY_s(\omega)\rangle  + \frac{1}{t}\langle u-x^\star,-X_t(\omega) + N_t(\omega) +X_0(\omega)\rangle.
            \end{aligned}
    \end{equation*}
    Since $\displaystyle\lim_{t\to \infty}\|X_t(\omega)-x^\star\|$ exists, $N_t(\omega)\to N_\infty(\omega)$ and $\int_0^\infty\langle X_s(\omega)-x^\star,dY_s(\omega)\rangle<\infty$ we obtain that 
    \begin{equation}\label{eqn-limit-erg111}
    	 \lim_{t\to \infty} \frac{1}{t}\int_0^t\langle x^\star-X_s(\omega),dY_s(\omega)\rangle + \frac{1}{t}\langle u-x^\star,-X_t(\omega) + N_t(\omega) +X_0(\omega)\rangle = 0.
    \end{equation}
    Suppose that $(t_n)$ is a sequence such that $t_n\to \infty$ and $\Aver{X}{t_n}(\omega)\to \bar x$, then the above inequality implies that 
    \begin{equation*}
        \langle v,u-\bar x \rangle\geq 0.
    \end{equation*}
    Since $A$ is maximal monotone, it follows that $\bar x\in \mathcal{S}$. Hence, $\mathbb{P}$-a.s. every cluster point of $(\Aver{X}{t})_{t\in \R_+}$ belongs to $\mathcal{S}$. By \emph{(c)} and Lemma \ref{opial-prop}  we conclude that $\mathbb{P}$-a.s. $\Aver{X}{t} $ converges to an element of $\mathcal{S}$. We denote $\displaystyle X^\star:=\lim_{t\to\infty}\Aver{X}{t} $. By using \emph{(b)}, we have $\sup_{t\geq 0}\|X_t\|^2\in L^1(\Omega)$, then by Fatou's Lemma
    \begin{equation*}
        \mathbb{E}(\|X^\star\|^2)\leq \lim_{t\to \infty}\mathbb{E}(\|\Aver{X}{t} \|^2)\leq \mathbb{E}(\sup_{s\geq 0}\|X_s\|^2)<\infty,
    \end{equation*}
    thus $X^\star\in L^2(\Omega;\R^d)$. Then, the proof of \emph{(d)} is done.
    \end{proof}
    
     \begin{remark}\label{F0-measurable-remark}
    	It is worth noting that assertions \emph{(a)} of Theorem \ref{main-thm-max-mon} can be stated when $x^\star\in \mathcal{S}$ is replaced by a random variable $X^\star\colon \Omega\to \mathcal{S}$ which is $\mathcal{F}_s$-measurable and $X^\star\in L^2(\Omega;\R^d)$.
    \end{remark}

In the following result, we formally prove that the solution $(X_t)$ of the SDI~\eqref{MainSDI} converges to a measurable function taking values in $\mathcal{S}$, provided that the solution of the deterministic differential inclusion 
\begin{align}\label{detDS}
    \dot{x}(t) \in -A(x(t)), \qquad x(0) = x_0,
\end{align}
converges to an equilibrium point of $A$ for any initial condition $x_0 \in \cl(\dom A)$. 

It is worth recalling that the existence of solutions for this class of differential inclusions has a long history (see, e.g., \cite{MR755330} and the references therein). Moreover, it is well known that such solutions do not necessarily converge to an equilibrium of $A$. Nevertheless, several authors have investigated sufficient conditions ensuring that the solution of the differential inclusion converges to an equilibrium point (see, e.g., \cite{MR377609,MR477932}). 

% \textcolor{red}{Perhaps the most important case for which global convergence applies is the subgradient evolution problem $A=\partial f$, for a convex lower-semi continuous function $f$. In this case, we have a chain rule 
% $$
% \frac{d}{dt}(f\circ x)(t)=\langle g(t),\dot{x}(t)\rangle\quad \forall g(t)\in\partial f(x(t)),\text{a.e. }t\in(0,T).
% $$
% By definition of a solution, it follows 
% $$
% \frac{d}{dt}(f\circ x)(t)=-||\dot{x}(t)||^{2},\text{a.e. }t\in(0,T).
% $$
% This shows that $f$ is a Lyapunov function for the trajectory $x$, and 
% $$
% \infty>f(x(0))-f(x(t))=\int_{0}^{t}||\dot{x}(s)||^{2}ds. 
% $$
% A convex lower-semi continuous function is bounded from below and hence $x(\cdot)\in W^{1,2}([0,T];\R^{d})$.
% }

\begin{theorem}\label{general_convergence_SDI}
   Under the assumptions of Theorem~\ref{main-thm-max-mon}, suppose that $A$ satisfies that every trajectory of the deterministic system~\eqref{detDS} starting from any $x_0 \in \operatorname{cl}(\operatorname{dom} A)$ converges to an equilibrium. Then there exists a random variable $X^\star \colon \Omega \to \mathcal{S}$ such that 
\begin{equation*}
        \mathbb{P}\text{-a.s. } \lim_{t\to \infty}X_t = X^\star \text{ and } \lim_{t\to \infty}\mathbb{E}(\|X_t-X^\star\|^2) = 0.
    \end{equation*}
\end{theorem}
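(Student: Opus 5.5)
We will compare the stochastic trajectory with deterministic trajectories of \eqref{detDS} restarted at late times, exploiting the fact that the noise becomes asymptotically negligible. Denote by $S(t)x_0$ the semigroup generated by $-A$ on $\cl(\dom A)$. It is a contraction: $\|S(t)x-S(t)y\|\le\|x-y\|$. By hypothesis, $S(t)x_0\to$ some point of $\mathcal{S}$ for every $x_0\in\cl\dom A$.

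\textbf{Step 1: the noise tail is small in $L^2$ and almost surely.} Set $N_t:=\int_0^t P^\ast\sigma(\tau,X_\tau)\,dB_\tau$, as in the proof of Theorem~\ref{main-thm-max-mon}. We showed there that $N_t\to N_\infty$ a.s. and in $L^2$. Hence
\[
\sup_{u\ge s}\|N_u-N_s\|\to 0 \quad\text{a.s. as } s\to\infty,
\]
and also in $L^2$ by Doob's inequality, since $\mathbb{E}\sup_{u\ge s}\|N_u-N_s\|^2\le 4\,\mathbb{E}\int_s^\infty\sigma_\infty^2$.

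\textbf{Step 2: pathwise comparison.} Fix $\omega$ and $s\ge0$, and let $x^s(\cdot)$ be the deterministic solution on $[s,\infty)$ with $x^s(s)=X_s(\omega)\in\cl\dom A$. Set $Z_t:=X_t-N_t$, so that $dZ_t=-dY_t$ with $Y$ satisfying the monotonicity measure condition. Let $z(t):=x^s(t)$; by the deterministic theory it is a strong/integral solution. We want an estimate of the form
\[
\|X_t-x^s(t)\|\le \|N_t-N_s\|+2\sup_{u\in[s,t]}\|N_u-N_s\|
\]
or at least $\le C\sup_{u\ge s}\|N_u-N_s\|$. To obtain it, we would compute
\[
d\|Z_t-x^s(t)\|^2=-2\langle Z_t-x^s(t),\,dY_t-\dot x^s(t)dt\rangle,
\]
which is a bounded-variation calculation, so no Itô term appears. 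We then use monotonicity
\[
\langle X_t-x^s(t),\,dY_t-\dot y\,dt\rangle\ge0,\qquad -\dot x^s(t)\in A(x^s(t)),
\]
which holds via \eqref{measure-prop-solution} applied to the pair $(\alpha,\beta)=(x^s(t),-\dot x^s(t))$. This pair is in $\gph A$ only for $t>s$ a.e. and is not necessarily continuous, so we approximate: restart from a point of $\dom A$ close to $X_s$ where the solution is Lipschitz, or use Yosida/resolvent approximations. This gives
\[
\tfrac{d}{dt}\|Z_t-x^s\|^2\le 2\langle N_t-N_s,\,dY_t-\dot x^s dt\rangle.
\]
Integrating by parts moves the bounded variation onto $N$-differences. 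This is the main obstacle: controlling $\int\langle N_t-N_s,dY_t\rangle$ without a bound on the total variation of $Y$. If it fails, we instead use the simpler route of regularizing $N$: compare $X$ with the solution of the deterministic inclusion perturbed by the continuous forcing $N$. Because $A$ is monotone, the solution map $g\mapsto x$ of $x=x_0-y+g$ is $1$-Lipschitz-type in sup norm, in the sense $\|x_1(t)-x_2(t)\|\le 2\sup\|g_1-g_2\|$ (a known estimate for Skorokhod-type monotone problems, provable first for $C^1$ forcing and then by density). Applying it with $g_1=N_\cdot-N_s$ and $g_2=0$ gives
\[
\sup_{t\ge s}\|X_t-x^s(t)\|\le 2\sup_{u\ge s}\|N_u-N_s\|=:2\delta_s(\omega).
\]

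\textbf{Step 3: conclude.} Since $x^s(t)\to e_s\in\mathcal{S}$ as $t\to\infty$, we get $\limsup_t\|X_t-e_s\|\le2\delta_s$. Hence $\limsup_t X_t$ and $\liminf_t X_t$ (coordinatewise oscillation) differ by at most $4\delta_s\to0$, so $X_t(\omega)$ converges, say to $X^\star(\omega)$. Moreover $d(X^\star,\mathcal{S})\le2\delta_s\to0$, and $\mathcal{S}$ is closed, so $X^\star\in\mathcal{S}$. Measurability follows because $X^\star$ is an a.s. limit, and $X^\star$ agrees with the limit of $\Aver{X}{t}$ from Theorem~\ref{main-thm-max-mon}(d). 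Finally, for $L^2$ convergence we note $\|X_t-X^\star\|^2\le 2\sup_u\|X_u-x^\star\|^2+2\|X^\star-x^\star\|^2$, which is integrable by Theorem~\ref{main-thm-max-mon}(b),(d), and apply dominated convergence.
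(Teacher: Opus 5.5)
\textbf{The gap is Step 2, and Step 3 depends on it entirely.} You invoke the deterministic estimate $\|x_1(t)-x_2(t)\|\le 2\sup\|g_1-g_2\|$ for solutions of $x=x_0-y+g$. This is not a known property of general maximal monotone operators, and it is false.

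For $C^1$ forcing, monotonicity only gives $\|x_1(t)-x_2(t)\|\le\int_0^t\|\dot g_1-\dot g_2\|\,d\tau$. That is a total-variation bound, so density in the sup norm cannot transfer it. The noise $N$ generally has unbounded variation. This is also why your first route stalls: it needs the total variation of $Y$, which you do not control.

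Here are two counterexamples satisfying the theorem's hypotheses. Let $J$ be the rotation by $\pi/2$ in $\R^2$, and take $\|u\|=1$, $x_0=0$, $g_2=0$ and $g_1(t)=\varepsilon(e^{-Jt}u-u)$, so that $\sup\|g_1-g_2\|\le 2\varepsilon$.
\begin{itemize}
\item For $A=\delta I+J$, which is $\delta$-strongly monotone, the rotating frame $x=e^{-Jt}z$ gives $\dot z=-\delta z-\varepsilon Ju$. Hence $\|x_1(t)-x_2(t)\|=\varepsilon(1-e^{-\delta t})/\delta$, and the ratio to $\sup\|g_1-g_2\|$ tends to $1/(2\delta)>2$ once $\delta<1/4$.
\item For $A(x)=Jx+\|x\|^2x$, the same computation gives $\dot z=-\|z\|^2z-\varepsilon Ju$. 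This operator is continuous and monotone, has $\mathcal S=\{0\}$, and all its deterministic trajectories converge to $0$. Here $\|x_1(t)\|\to\varepsilon^{1/3}$, so no sup-norm Lipschitz bound holds at all.
\end{itemize}
A purely pathwise comparison discards exactly what makes the theorem true: the noise enters through a stochastic integral that is centered and controlled by its quadratic variation.

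\textbf{What is missing is It\^o's formula for $\|X_t-x^s(t)\|^2$,} based on \eqref{eqn-XY_Auxiliar}, in place of a bounded-variation computation for $X-N$. Combined with the monotonicity inequality for the pair $(x^s,-\dot x^s)$ (the paper uses \cite[Proposition 6.17]{MR3308895}), it yields $\frac12\|X_t-x^s(t)\|^2\le\int_s^t\langle X_\tau-x^s(\tau),P^\ast\sigma(\tau,X_\tau)\,dB_\tau\rangle+\frac12\int_s^t\sigma_\infty(\tau)^2\,d\tau$.

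The paper then takes expectations:
\begin{itemize}
\item This gives $\mathbb{E}\|X_t-x_T(t)\|^2\le\int_T^\infty\mathbb{E}\,\sigma_\infty^2$.
\item Dominated convergence on $\|x_T(t)-x_T^\ast\|^2$ then gives $\lim_t\mathbb{E}\,d(X_t;\mathcal S)^2=0$, so some sequence $X_{t_n}$ has an a.s.\ cluster point in $\mathcal S$.
\item Theorem~\ref{main-thm-max-mon}(c) (a.s.\ existence of $\lim_t\|X_t-x^\star\|$ for all $x^\star\in\mathcal S$ simultaneously) upgrades this to convergence of the whole path.
\end{itemize}

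If you want to keep your pathwise Step 3, bound the martingale term instead:
\begin{itemize}
\item Use $\|X_\tau-x^s(\tau)\|\le 2\sup_u\|X_u-x^\star\|$, BDG and Cauchy--Schwarz. The expected supremum over $t\ge s$ of the stochastic integral is then at most $C(\mathbb{E}\sup_u\|X_u-x^\star\|^2)^{1/2}(\mathbb{E}\int_s^\infty\sigma_\infty^2)^{1/2}$, which tends to $0$.
\item Borel--Cantelli along a suitable sequence $s_n\to\infty$ then gives $\sup_{t\ge s_n}\|X_t-x^{s_n}(t)\|\to0$ a.s.
\end{itemize}
This vanishing random error, rather than your constant-$2$ bound, is all Step 3 needs. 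Your Step 1 and your final $L^2$ argument are fine.
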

\begin{proof}
For every $T>0$, consider $x_T\colon [T,\infty[\to \R^d$, the unique absolutely continuous function satisfying \eqref{detDS}, with initial condition $x_T(T) = X_T$. By our assumption, we have $\displaystyle x_T^\ast := \lim_{t\to \infty} x_T(t)\in \mathcal{S}$. Moreover, if $\bar x\in \mathcal{S}$, we have $\|x_T(t)-\bar x\|\leq \|X_T-\bar x\|, \forall t\geq T$. Since $X_T\in L^2_{\mathbb{P}}(\Omega;\R^d)$, it follows that $x_T(t)\in L^2_{\mathbb{P}}(\Omega;\R^d)$ and $x_T^\ast\in L^2_{\mathbb{P}}(\Omega;\R^d)$. By using \cite[Proposition 6.17]{MR3308895}, we have
\begin{equation*}
    \int_a^b \langle X_t - x_{T}(t) , dY_t\rangle \geq \int_a^b \langle X_t - x_{T}(t) ,-x'_{T}(t)\rangle dt, \forall a\geq b\geq T.
\end{equation*}
The above conclusion holds $\mathbb{P}$-a.s.. Moreover, by using Itô's Lemma we have for  $t\geq T$
\begin{equation*}
    \begin{aligned}
    \frac{1}{2}\|X_t-x_T(t)\|^2 = & \    \int_T^t\langle X_s-x_T(s), -dY_s\rangle + \int_T^t\langle X_s-x_T(s), -x_T'(s)\rangle ds\\
    & + \int_T^t\langle X_s-x_T(s), P^\ast\sigma(s,X_s)dB_s\rangle\\
    &+ \frac{1}{2}\int_T^t\|P^\ast\sigma(s,X_s)\|^2 ds\\
    \leq & \ \int_T^t\langle X_s-x_T(s), P^\ast\sigma(s,X_s)dB_s\rangle + \frac{1}{2}\int_T^t\sigma_\infty(s)^2 ds.
    \end{aligned}
\end{equation*}
Taking expectation, we conclude 
\begin{equation*}
    \forall t\geq T : \mathbb{E}(\|X_t-x_T(t)\|^2)\leq \int_T^t\mathbb{E}(\sigma_\infty(s)^2)ds.
\end{equation*}
Since the expectation of martingale part of the right hand side is zero. On the other hand, observe that if $t\geq T$
\begin{equation*}
    d(X_t;\mathcal{S})^2\leq \|X_t-x^\ast_T\|^2\leq 2\|X_t-x_T(t)\|^2 + 2\|x_T(t)-x_T^\ast\|^2,
\end{equation*}
where $d(X_t;\mathcal{S})$ denotes the distance form $X_t$ to $\mathcal{S}$. By taking expectation, we have
\begin{equation*}
    \begin{aligned}
        \mathbb{E}(d(X_t;\mathcal{S})^2)&\leq 2\mathbb{E}(\|X_t-x_T(t)\|^2) + 2\mathbb{E}(\|x_T(t)-x_T^\ast\|^2)\\
        &\leq 2\int_T^\infty \mathbb{E}(\sigma_\infty(s)^2) ds + 2\mathbb{E}(\|x_T(t)-x_T^\ast\|^2). 
    \end{aligned}
\end{equation*}
Since $\forall t\geq T : \|x_T(t)-x_T^\ast\|^2\leq \|X_T-x_T^\ast\|^2\in L^1(\Omega)$ and $\displaystyle\lim_{t\to \infty}\|x_T(t)-x_T^\ast\|^2 = 0$, by using dominated convergence theorem we have $\displaystyle\lim_{t\to \infty}\mathbb{E}(\|x_T(t)-x_T^\ast\|^2) = 0$. Therefore,
\begin{equation*}
    \limsup_{t\to \infty}\mathbb{E}(d(X_t;\mathcal{S})^2)\leq 2\int_T^\infty \mathbb{E}(\sigma_\infty(s)^2) ds.
\end{equation*}
 Since $T>0$ was arbitrary and $\sigma_\infty\in L^2(\R_+)$, we conclude that
\begin{equation*}
    \lim_{t\to \infty}\mathbb{E}(d(X_t;\mathcal{S})^2) = 0.
\end{equation*}
It follows that there is $(t_n)\to \infty$ such that $\mathbb{P}$-a.s. $\displaystyle\lim_{n\to \infty}d(X_{t_n};\mathcal{S})^2 = 0$. Take the set $\Omega'\subset\Omega$ such that for all $\omega\in \Omega'$, $\displaystyle\lim_{n\to \infty}d(X_{t_n}(\omega);\mathcal{S})^2 = 0$ and for all $\displaystyle x^\star\in \mathcal{S} : \lim_{t\to \infty}\|X_t(\omega)-x^\star\| $ exists, we know that $\mathbb{P}(\Omega') = 1$. Fix $\omega\in \Omega'$, since  $(X_{t_n}(\omega))$ is bounded, we have up to a subsequence $X_{t_n}(\omega)\to x^\star$ and given that $\displaystyle \lim_{n\to \infty}d(X_{t_n}(\omega);\mathcal{S})=0$, we have $x^\star\in \mathcal{S}$. Since $\displaystyle\lim_{t\to \infty}\|X_t(\omega)-x^\star\|$ exists, we conclude that $\displaystyle\lim_{t\to \infty}\|X_t(\omega)-x^\star\| = 0$. Since $\omega\in \Omega'$ was arbitrary, we conclude that $\mathbb{P}$-a.s. $\displaystyle X^\star :=\lim_{t\to \infty} X_t\in\mathcal{S}$. Finally, by Fatou's lemma we have $\mathbb{E}(\|X^\star\|^2)\leq \mathbb{E}(\sup_{t\geq 0}\|X_t\|^2)$ and by Theorem \ref{main-thm-max-mon} we conclude that $X^\star\in L^2_{\mathbb{P}}(\Omega;\R^d)$ and by dominated convergence theorem we conclude that $\displaystyle\lim_{t\to \infty}\mathbb{E}(\|X_t-X^\star\|^2) = 0$.
\end{proof}

Perhaps the most important case in which global convergence of solutions to \eqref{detDS} applies is the
subgradient evolution problem, that is,  $A = \partial f$, where $f$ is a proper, convex, and lower semicontinuous
function; see, for instance, \cite{MR755330}. Nonetheless, an equally important and more general setting
is provided by the notion of \emph{demipositive} maximal monotone operators, which in particular covers
the case of subdifferentials of convex functions, cocoercive operators, strongly maximal monotone operators among others  (see, e.g., \cite[Proposition 6.2]{MR2731260}).

We recall that a maximal monotone operator $A$ is said to be \emph{demipositive} if there exists
$w \in \mathcal{S}$ such that, for every sequence $(u_n) \subset \dom A$ converging to $u$, and every
bounded sequence $(v_n)$ with $v_n \in A(u_n)$, the implication
\begin{equation*}
    \langle v_n, u_n - w \rangle \to 0 
    \quad \Longrightarrow \quad 
    u \in \mathcal{S}
\end{equation*}
holds. This notion was introduced in \cite{MR377609} to establish the convergence of solutions of
the deterministic system~\eqref{detDS} to equilibrium points of the maximal monotone operator.

\begin{corollary}\label{cor-theo-conv-general}
    Under the assumptions of Theorem~\ref{main-thm-max-mon} suppose that  $A$ is demipositive. Then, there is some random variable $X^\star\in L^2_{\mathbb{P}}(\Omega;\R^d)$ such that 
    \begin{equation*}
        \mathbb{P}\text{-a.s. } \lim_{t\to \infty}X_t = X^\star \text{ and } \lim_{t\to \infty}\mathbb{E}(\|X_t-X^\star\|^2) = 0.
    \end{equation*}
\end{corollary}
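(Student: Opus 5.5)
The plan is to deduce the corollary directly from Theorem~\ref{general_convergence_SDI}. That theorem applies once we know that every trajectory of the deterministic system~\eqref{detDS} starting from any $x_0\in\cl(\dom A)$ converges to a point of $\mathcal{S}$. The $L^2$-integrability of the limit $X^\star$ and the convergence $\mathbb{E}(\|X_t-X^\star\|^2)\to0$ are then part of its conclusion, since the $X^\star$ there is shown to lie in $L^2_{\mathbb{P}}(\Omega;\R^d)$. So the whole task is to verify the deterministic hypothesis for demipositive $A$, which is the classical result of Bruck \cite{MR377609}. I would cite it, and sketch its proof for completeness.

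For the sketch, let $x(\cdot)$ be the solution of \eqref{detDS} from $x_0\in\cl(\dom A)$ and fix the point $w\in\mathcal{S}$ from the definition of demipositivity.
\begin{enumerate}
\item \textbf{Fejér monotonicity and Lyapunov bound.} By monotonicity, $t\mapsto\|x(t)-z\|$ is nonincreasing for every $z\in\mathcal{S}$. Moreover,
\[
\tfrac12\|x(t)-w\|^2+\int_\delta^t\langle \dot x(s)\cdot(-1),\,x(s)-w\rangle\,ds=\tfrac12\|x(\delta)-w\|^2 .
\]
Hence $\int_\delta^\infty\langle -\dot x(s),x(s)-w\rangle\,ds<\infty$, and the integrand is nonnegative because $-\dot x(s)\in A(x(s))$ and $0\in A(w)$.
\item \textbf{Bounded velocities.} For $t\ge\delta>0$ the solution enters $\dom A$, with $-\dot x(t^+)=A^0(x(t))$. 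The function $t\mapsto\|A^0(x(t))\|$ is nonincreasing, by the standard regularizing effect in finite dimension. So $v(t):=A^0(x(t))$ is bounded on $[\delta,\infty)$.
\item \textbf{A good subsequence.} By Lemma~\ref{lem-convergence-zero} there is $t_n\to\infty$ with $\langle v(t_n),x(t_n)-w\rangle\to0$. Since $x$ is bounded, we may pass to a further subsequence with $x(t_n)\to u$. Demipositivity applied to $u_n=x(t_n)$ and the bounded $v_n=v(t_n)$ gives $u\in\mathcal{S}$.
\item \textbf{Conclusion.} Since $\|x(t)-u\|$ is nonincreasing and $\|x(t_n)-u\|\to0$, we get $x(t)\to u\in\mathcal{S}$.
\end{enumerate}

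The main obstacle is the technical point in the second step. We need to know that the right derivative equals $-A^0(x(t))$ and that its norm is nonincreasing and finite for $t>0$, even when $x_0$ lies only in $\cl(\dom A)$. For this I would rely on the classical Brezis theory \cite{MR348562}, which covers this in finite dimension. If that route is problematic, the fallback is to argue first for $x_0\in\dom A$, where $\|A^0(x(t))\|\le\|A^0(x_0)\|$. The general case then follows from the nonexpansiveness of the semigroup on $\cl(\dom A)$, because the set of initial data giving convergent trajectories is closed by a Fejér-type argument. Once the deterministic convergence is established, Theorem~\ref{general_convergence_SDI} gives the claim.
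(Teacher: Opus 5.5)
Your proposal is correct and follows the paper's route. The paper likewise derives the corollary from Theorem~\ref{general_convergence_SDI}: it cites Brezis for well-posedness of \eqref{detDS} from any point of $\cl(\dom A)$, and Bruck's classical result (via Peypouquet--Sorin, Theorem 6.3) that trajectories of a demipositive operator converge to a point of $\mathcal{S}$. Your sketch of Bruck's argument, including the density/nonexpansiveness fallback for initial data in $\cl(\dom A)\setminus\dom A$, is sound and simply spells out what the paper leaves to those citations.
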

\begin{proof}
    As a consequence of \cite[Corollary III.3.1]{MR348562}, the system \eqref{detDS} has an unique solution for all $x_0\in \cl(\dom A)$. The result follows from \cite[Theorem 6.3]{MR2731260} and Theorem \ref{general_convergence_SDI}.
\end{proof}
\begin{remark}
    The above corollary ensures that when $A$ is either strongly maximal monotone or cocoercive, the conclusion of Theorem \ref{general_convergence_SDI} holds since in both cases $A$ is demipositive. Particularly, Corollary~\ref{cor-theo-conv-general} generalizes the results obtained in \cite[Theorem 5.1 \& 5.2]{MR4989859}, where only cocoercive operators where considered.
\end{remark}
The next results provides an error bound in terms of a suitably defined gap function associated with the maximally monotone operator. Consider the Br\'{e}zis-Haraux function 
$$
H_{A}(x,y)=\sup\{\langle x-u,v-y\rangle :(u,v)\in\gph(A)\}.
$$
It is well known \cite{MR3729487} that $H_{A}$ can be interpreted as an exterior penalty for $\gph(A)$, in the sense that $H_{A}(x,u)=0$ if and only if $(x,u)\in \gph(A)$. Moreover, it holds true that 
$$
H_{A}(x,y)\geq||x-(\text{Id}+A)^{-1}(x+y)||^{2}\geq 0 \qquad\forall (x,y)\in\R^{d}\times\R^{d}. 
$$
Given these properties, a standard merit function for solving the monotone inclusion problem is provided in terms of the gap function $G_A(x):=H_{A}(x,0)$, or its localized version
\begin{equation}
\label{eq:gap}
G_A(x\vert K)=\sup\left\{ \langle x-u,v\rangle : u\in\dom(A)\cap K,\; v\in Au \right\},
\end{equation}
 where $K\subset\R^{d}$ is a given compact set. In \cite{MR3575644}, the authors show that $\Gamma(x)$ is the minimal translation invariant gap function for the problem of finding an element of $A^{-1}(0)$. The localized version defined in \eqref{eq:gap} is used to obtain a well-defined merit function for equilibrium problems having an unbounded domain (see e.g. \cite{MR2295146}). 

    \begin{proposition}
    Under the assumptions of Theorem~\ref{main-thm-max-mon}, consider the process $X$ solving \eqref{MainSDI}. Then, for all compact set $K\subset \R^d$, we have  that
            \begin{equation*}
            \mathbb{E}[G_A(\Aver{X}{t}\vert K)]\leq \mathcal{O}(1/t).
                %\mathbb{E}\left(\sup\{ \langle v,\Aver{X}{t}-u\rangle :(u,v)\in \gph A\cap K\times \R^d\}\right)\leq \mathcal{O}\left(\frac{1}{t}\right).
            \end{equation*}
In addition, assume that $A$ is $\rho$-strongly maximal monotone with $\mathcal{S} = \{x^\star\}$, then
    		\begin{equation}\label{eqn-strong-mon-rate}
    			\mathbb{E}(\|X_t-x^\star\|^2)\leq e^{-2\rho t}\mathbb{E}(\|X_0-x^\star\|^2) + \int_0^te^{-2\rho(t-s)}\mathbb{E}(\sigma_\infty(s)^2)ds.
    		\end{equation}
    \end{proposition}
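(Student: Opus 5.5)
Both parts rest on the Itô identity \eqref{eqn-ito11}, now with a general point $u$ or $x^\star$ in place of a zero of $A$, combined with the monotonicity property \eqref{measure-prop-solution} of $Y$.

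\textbf{Gap bound.} Fix $(u,v)\in\gph A$ with $u\in K$. Use the constant pair $(\alpha_t,\beta_t)=(u,v)$ in \eqref{measure-prop-solution}. This gives $\int_0^t\langle X_s-u,dY_s\rangle\ge \int_0^t\langle X_s-u,v\rangle ds = t\langle \Aver{X}{t}-u,v\rangle$. Itô's formula for $\frac12\|X_t-u\|^2$, with $X$ written as in \eqref{eqn-XY_Auxiliar}, yields
\[
\tfrac12\|X_t-u\|^2+\int_0^t\langle X_s-u,dY_s\rangle=\tfrac12\|X_0-u\|^2+\tfrac12\int_0^t\|P^\ast\sigma(s,X_s)\|^2ds+\mathscr{M}^u_t ,
\]
where $\mathscr{M}^u_t:=\int_0^t\langle X_s-u,P^\ast\sigma(s,X_s)dB_s\rangle$. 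Hence, pathwise,
\[
t\,\langle \Aver{X}{t}-u,v\rangle\le \tfrac12\|X_0-u\|^2+\tfrac12\int_0^\infty\sigma_\infty(s)^2ds+\mathscr{M}^u_t .
\]
The main obstacle is that the supremum over $(u,v)$ must be taken \emph{before} the expectation, while the martingale term depends on $u$. I would split $\mathscr{M}^u_t=\mathscr{M}^{0}_t-\langle u,N_t\rangle$ with $N_t=\int_0^tP^\ast\sigma(s,X_s)dB_s$. Then $\sup_{u\in K}|\langle u,N_t\rangle|\le R_K\|N_t\|$, where $R_K:=\sup_{u\in K}\|u\|$, and $\mathbb{E}\|N_t\|\le(\mathbb{E}\int_0^\infty\sigma_\infty^2)^{1/2}$. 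Also $\mathscr{M}^0$ is a true martingale with zero mean, by Theorem~\ref{main-thm-max-mon}(b) and BDG as in \eqref{eqBDG}. The term $\|X_0-u\|^2\le 2\|X_0\|^2+2R_K^2$ is bounded uniformly over $K$.

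Taking the supremum over $(u,v)$ and then expectations gives
\[
t\,\mathbb{E}[G_A(\Aver{X}{t}\vert K)]\le \mathbb{E}\|X_0\|^2+R_K^2+\tfrac12\mathbb{E}\int_0^\infty\sigma_\infty^2+R_K\Big(\mathbb{E}\int_0^\infty\sigma_\infty^2\Big)^{1/2}.
\]
The right-hand side is a constant independent of $t$, which is the $\mathcal{O}(1/t)$ bound. Measurability of the supremum is handled by restricting to a countable dense subset of $\gph A\cap(K\times\R^d)$, using continuity of the integrand in $(u,v)$.

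\textbf{Strong monotonicity.} Apply Itô's formula to $e^{2\rho t}\|X_t-x^\star\|^2$ (equivalently, use Lemma~\ref{lemma-comparison}). The needed ingredient is the strengthened measure inequality
\[
\langle X_t-x^\star,dY_t\rangle\ge\rho\|X_t-x^\star\|^2dt .
\]
To obtain it, note that $A-\rho\,\mathrm{Id}$ is maximal monotone with $\gph(A-\rho\,\mathrm{Id})=\{(x,w-\rho x):(x,w)\in\gph A\}$. The triple $(X,\,Y-\rho\int_0^\cdot X_sds,\,M)$ then satisfies condition c) of Definition~\ref{Def_solutionSDI} for $A-\rho\,\mathrm{Id}$, because the pairs $(\alpha,\beta)\in\gph A$ correspond exactly to the pairs $(\alpha,\beta-\rho\alpha)$ in the shifted graph. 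Applying \eqref{measure-prop-solution} with the pair $(x^\star,-\rho x^\star)$ gives the inequality above.

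Substituting into \eqref{eqn-ito11}, taking expectations (the stochastic integral is a martingale by part (b)), and bounding $\|P^\ast\sigma\|\le\sigma_\infty$ gives
\[
\phi(t)\le\phi(s)-2\rho\int_s^t\phi+\int_s^t\mathbb{E}\sigma_\infty^2 ,\qquad \phi(t):=\mathbb{E}\|X_t-x^\star\|^2 .
\]
Lemma~\ref{lemma-comparison} with $\ell\equiv2\rho$ and $\beta=\mathbb{E}\sigma_\infty^2$ then yields \eqref{eqn-strong-mon-rate}; $\phi$ is continuous by dominated convergence and Theorem~\ref{main-thm-max-mon}(b).
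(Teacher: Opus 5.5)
Your gap-function bound is correct and starts from the same inequality as the paper, $t\langle \Aver{X}{t}-u,v\rangle\le\int_0^t\langle X_s-u,dY_s\rangle$. The paper controls the right-hand side uniformly in $u\in K$ by re-anchoring at some $x^\star\in\mathcal{S}$ and substituting $Y_t=X_0-X_t+N_t$. You instead apply It\^o at $u$ and split $\mathscr{M}^u_t=\mathscr{M}^0_t-\langle u,N_t\rangle$, which is the same identity rearranged. Your countable dense subset also handles the $(u,v)$-dependent null sets that the paper passes over.

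The strong-monotonicity part, however, has a genuine gap at its key step $\langle X_t-x^\star,dY_t\rangle\ge\rho\|X_t-x^\star\|^2dt$, which the paper itself uses without comment. Your reason for it does not work. For $\tilde Y:=Y-\rho\int_0^\cdot X_s\,ds$ and $(\alpha_t,\beta_t)\in\gph A$,
\[
\langle X_t-\alpha_t,\,d\tilde Y_t-(\beta_t-\rho\alpha_t)\,dt\rangle=\langle X_t-\alpha_t,\,dY_t-\beta_t\,dt\rangle-\rho\|X_t-\alpha_t\|^2\,dt,
\]
because $Y$ is shifted by $\rho X_t\,dt$, not by $\rho\alpha_t\,dt$. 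So condition c) of Definition~\ref{Def_solutionSDI} for $A$ only gives that the left-hand side is $\ge-\rho\|X_t-\alpha_t\|^2dt$. The correspondence of graphs alone does not transfer c) to $A-\rho\,\mathrm{Id}$. For $\alpha\equiv x^\star$ the missing nonnegativity is exactly the inequality you are trying to prove, so the step is circular.

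The claim is nevertheless true, because the sign is favorable in the opposite direction and uniqueness is available. The operator $B:=A-\rho\,\mathrm{Id}$ is maximal monotone: a monotone extension $B'$ of $B$ would make $B'+\rho\,\mathrm{Id}$ a monotone extension of $A$. So Theorem~\ref{Theo:EU} with operator $B$ and drift $F(t,x)=-\rho x$ (all hypotheses hold with $\zeta\equiv0$) yields a solution $(X',Y',M')$. Since
\[
\langle X'_t-\alpha_t,\,dY'_t+\rho X'_t\,dt-\beta_t\,dt\rangle=\langle X'_t-\alpha_t,\,dY'_t-(\beta_t-\rho\alpha_t)\,dt\rangle+\rho\|X'_t-\alpha_t\|^2\,dt\ge0,
\]
the triple $(X',Y'+\rho\int_0^\cdot X'_s\,ds,M')$ solves \eqref{MainSDI} for $A$, so uniqueness gives $X'=X$ and $Y'=\tilde Y$. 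Hence $\tilde Y$ does satisfy c) for $B$. The pair $(x^\star,-\rho x^\star)\in\gph B$ (recall $0\in A(x^\star)$) then gives $\langle X_t-x^\star,dY_t\rangle-\rho\|X_t-x^\star\|^2dt\ge0$.

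A pathwise alternative also works. Write $dY_t=U_t\,dt+d\mu^s_t$ (Lebesgue decomposition). Nonnegativity of \eqref{mono-Yt} for constant pairs forces both its absolutely continuous and singular parts to be nonnegative, so $\langle X_t-x^\star,d\mu^s_t\rangle\ge0$. A countable dense subset of $\gph A$ and maximality give $U_t\in A(X_t)$ for a.e.\ $t$, whence $\langle X_t-x^\star,U_t\rangle\ge\rho\|X_t-x^\star\|^2$.

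With this inequality in hand, the rest of your argument (It\^o, expectations, Lemma~\ref{lemma-comparison}) goes through as written.
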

    \begin{proof}
   Take a compact set $K\subset \R^d$, note that in the proof of Theorem \ref{MainSDI} it is proved that for all $(u,v)\in \gph A$ and $x^\star\in \mathcal{S}$
        \begin{equation*}
            \begin{aligned}
            \left\langle v,\Aver{X}{t}-u\right\rangle
             \leq \frac{1}{t}\int_0^t\langle X_s-x^\star,dY_s\rangle  + \frac{1}{t}\langle u-x^\star,X_t-X_0- N_t\rangle.
            \end{aligned}
        \end{equation*}
        where $N_t := \int_0^t \sigma(t,X_t)dB_t$. Then, taking the supremum and then expectation it yields
    \begin{equation*}
        \begin{aligned}
             \mathbb{E}[G_A(\Aver{X}{t}\vert K)] 
        \leq & \  \frac{1}{t}\mathbb{E}\left(  \int_0^\infty\langle X_s-x^\star,dY_s\rangle\right)\\
        &+ \frac{1}{t}(\sup_{w\in K}\|w\|+ \|x^\star\|)(2\mathbb{E}(\sup_{s\geq 0}\|X_s\|^2)^{1/2}  +\sup_{s\geq 0}\mathbb{E}(\|N_s\|^2)^{1/2}),
        \end{aligned}
    \end{equation*}
   which gets the desired error bound.
    
    \noindent Now, define the function $\psi(t) := \mathbb{E}(\|X_t-x^\star\|^2)$. By Fatou's Lemma, $\psi$ is lsc. According to Theorem \ref{main-thm-max-mon} we have that for all $t,s\in\R_+$ with $t\geq s$
    	\begin{equation*}
    		\begin{aligned}
    			\psi(t)&\leq \psi(s) -2\mathbb{E}\left(\int_s^t\langle X_\tau-x^\star,dY_\tau\rangle\right)+\int_s^t\mathbb{E}(\sigma_\infty(\tau)^2)d\tau\\
    			&\leq \psi(s) -2\rho\mathbb{E}\left(\int_s^t\|X_\tau-x^\star\|^2 d\tau\right)+\int_s^t\mathbb{E}(\sigma_\infty(\tau)^2)d\tau\\
    			&= \psi(s) -2\rho\int_s^t\psi(\tau)d\tau+\int_s^t\mathbb{E}(\sigma_\infty(\tau)^2)d\tau.
    		\end{aligned}
    	\end{equation*}
    	By using Lemma \ref{lemma-comparison} we have that
    	\begin{equation*}
    		\psi(t)\leq e^{-2\rho t}\psi(0) + \int_0^te^{-2\rho(t-s)}\mathbb{E}(\sigma_\infty(s)^2)ds,
    	\end{equation*}
        which proves \eqref{eqn-strong-mon-rate}.
    \end{proof}
    \section{Stochastic Subdifferential Inclusion}\label{Section5}

In this section, we consider a proper, convex, and lower semicontinuous function 
$\varphi \colon \mathbb{R}^d \to \mathbb{R} \cup \{\infty\}$, and study the asymptotic behavior of the solution to the SDI \eqref{MainSDI} in the particular case $A = \partial \varphi$. This analysis builds on the previous results and in this setting we can derive additional convergence estimates, including estimates on the values of $\varphi$ along the trajectory. For related results in the smooth convex setting, we refer the reader to \cite{MR4989859, MR3747470}.

First, let us recall the following lemma presented in  \cite[Proposition 6.36]{MR3308895}.
    \begin{proposition}\label{prop-ineq-convex}
    	For all continuous and adapted process $\alpha\colon \R_+\to \R^d$, it holds $\mathbb{P}$-a.s. 
    	\begin{equation*}
    		\forall a,b\in \R_+, a\leq b : \int_a^b \varphi(\alpha_t)dt\geq \int_a^b\varphi(X_t)dt + \int_a^b\langle \alpha_t-X_t,dY_t\rangle.
    	\end{equation*}
    \end{proposition}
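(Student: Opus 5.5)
We would reduce the statement to the defining monotonicity property \eqref{measure-prop-solution} of the bounded-variation component $Y$, combined with a subgradient-type inequality for $\varphi$ along regularized points. First, it suffices to treat $\alpha_t\in\dom\varphi$ for a.e.\ $t$ with $\int_a^b\varphi(\alpha_t)dt<\infty$; otherwise the left-hand side is $+\infty$ and nothing needs proving. Since $\varphi$ is convex and lsc, it is bounded below by an affine function, so the integrals make sense in $\R\cup\{\infty\}$. The key tool will be the Moreau--Yosida regularization. For $\lambda>0$ set $\alpha^\lambda_t:=J_{\lambda\partial\varphi}(\alpha_t)$ and $\beta^\lambda_t:=\frac{1}{\lambda}(\alpha_t-\alpha^\lambda_t)\in\partial\varphi(\alpha^\lambda_t)$. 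Since $J_{\lambda\partial\varphi}$ is nonexpansive, $(\alpha^\lambda,\beta^\lambda)$ is a pair of continuous adapted processes with values in $\gph\partial\varphi$, so it is admissible in \eqref{measure-prop-solution}.

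Applying \eqref{measure-prop-solution} with this pair gives
\[
\int_a^b\langle X_t-\alpha^\lambda_t,dY_t\rangle\ge\int_a^b\langle X_t-\alpha^\lambda_t,\beta^\lambda_t\rangle dt .
\]
The subgradient inequality at $\alpha^\lambda_t$ gives $\varphi(X_t)\ge\varphi(\alpha^\lambda_t)+\langle\beta^\lambda_t,X_t-\alpha^\lambda_t\rangle$. This is valid even when $\varphi(X_t)=\infty$, which may happen because $X_t\in\cl\dom\varphi$ only. Combining the two inequalities yields
\[
\int_a^b\varphi(\alpha^\lambda_t)dt+\int_a^b\langle X_t-\alpha^\lambda_t,dY_t\rangle\ge\int_a^b\varphi(\alpha^\lambda_t)+\langle\beta^\lambda_t,X_t-\alpha^\lambda_t\rangle\,dt,
\]
and the right-hand side is at most $\int_a^b\varphi(X_t)dt$. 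Comparing this with the target, we need the reverse direction. We therefore instead use the subgradient inequality at $\alpha^\lambda_t$ tested against $X_t$ only through the monotone pairing. More precisely, we must aim at $\varphi(\alpha_t)\ge\varphi(X_t)+\langle\cdot\rangle$, which is the subgradient inequality at $X_t$ with "subgradient" $dY_t$. To obtain it, we would also regularize $X$. By the paper's construction, $Y$ comes from \cite{MR3308895}, where $Y$ is a limit of Yosida approximations $Y^\epsilon_t=\int_0^t\nabla\varphi_\epsilon(X^\epsilon_s)ds$ with $X^\epsilon\to X$ uniformly on compacts and $Y^\epsilon\to Y$ in the relevant sense. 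For the Moreau envelope $\varphi_\epsilon$, convexity gives pointwise
\[
\varphi_\epsilon(\alpha_t)\ge\varphi_\epsilon(X^\epsilon_t)+\langle\nabla\varphi_\epsilon(X^\epsilon_t),\alpha_t-X^\epsilon_t\rangle .
\]
Integrating over $[a,b]$ yields the desired inequality for $(X^\epsilon,Y^\epsilon,\varphi_\epsilon)$.

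The passage to the limit $\epsilon\to0$ is the main obstacle. Since $\varphi_\epsilon\le\varphi$, the left-hand side is bounded by $\int_a^b\varphi(\alpha_t)dt$. For the first term on the right, $\varphi_\epsilon\uparrow\varphi$ together with uniform convergence $X^\epsilon\to X$ gives $\liminf\varphi_\epsilon(X^\epsilon_t)\ge\varphi(X_t)$, so Fatou's lemma applies, using the affine lower bound. For the Stieltjes term, we need $\int_a^b\langle\alpha_t-X^\epsilon_t,dY^\epsilon_t\rangle\to\int_a^b\langle\alpha_t-X_t,dY_t\rangle$, which requires uniform bounds on the total variation of $Y^\epsilon$ on $[a,b]$. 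Those bounds are exactly what \cite{MR3308895} provides in the nonempty-interior case. Because our $Y$ is built in Theorem~\ref{Theo:EU} from the reduced operator $T=P^\ast\partial\varphi(P\cdot)=\partial(\varphi\circ P)$ on $L$, where $\dom T$ has nonempty interior, we would run the whole argument for $\varphi|_{L}$ and then transfer back.

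The transfer uses that $\alpha_t-X_t$ paired with $dY_t$ only sees the $L$-component. When $\alpha_t\notin\aff\dom\varphi$, we have $\varphi(\alpha_t)=\infty$ and the inequality is trivial. Finally, the statement must hold simultaneously for all $a\le b$; we would first obtain it for rational $a,b$ outside a null set and then extend by continuity in $a$ and $b$ of both integrals.
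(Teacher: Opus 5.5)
The paper gives no argument of its own here: it imports the inequality from \cite[Proposition 6.36]{MR3308895}, applied pathwise to $(X,Y)$, and does not comment on the fact that $\dom\varphi$ may have empty interior. Your route is genuinely different, and it is viable in outline.

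\textbf{What you do differently.} You never use condition (c) of Definition~\ref{Def_solutionSDI} directly. Instead you approximate the stochastic system by Yosida-regularized SDEs, for which the claim is just the gradient inequality of $\varphi_\epsilon$, and then pass to the limit. Uniqueness in Theorem~\ref{Theo:EU} identifies the limit with $(X,Y)$, and the reduction to $L$ with $T=\partial(\varphi\circ P)$ supplies interiority.

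\textbf{What it buys.} The limit step is sound: $\varphi_\epsilon\le\varphi$, $\liminf\varphi_\epsilon(X^\epsilon_t)\ge\varphi(X_t)$ together with a uniform affine minorant for Fatou, and convergence of Stieltjes integrals under uniform convergence with bounded total variation. Making the interiority issue explicit is a real plus over the bare citation.

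\textbf{What it costs.} The decisive input, convergence of the Yosida scheme with total-variation control, is asserted rather than derived, and the paper never uses it. Such estimates are typically only in expectation or in probability. You would therefore still need a.s.\ convergence along a subsequence, then Fatou to get $\liminf_n\|Y^{\epsilon_n}\|_{\mathrm{TV}[a,b]}<\infty$ a.s., and then an $\omega$-dependent further subsequence.

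\textbf{A small fix.} The sentence ``when $\alpha_t\notin\aff\dom\varphi$ the inequality is trivial'' should read: if this happens on a set of positive measure, the left side is $+\infty$; otherwise continuity puts $\alpha$ in $\aff\dom\varphi$ on all of $[a,b]$.

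\textbf{A shorter route.} A pathwise argument realizes exactly the idea you formulated after your first attempt, a subgradient inequality at $X_t$ with ``subgradient'' $dY_t$. It needs neither regularization nor interiority.
Set $\mu:=dt+|dY|$ and write $dY=g\,d\mu$, $dt=h\,d\mu$. Testing (c) with a countable dense set of constant pairs in $\gph\partial\varphi$ gives, off one null set, $\langle X_t-u,g_t-h_tv\rangle\ge 0$ for all $(u,v)\in\gph\partial\varphi$ and $\mu$-a.e.\ $t$.
Maximal monotonicity then yields $g_t/h_t\in\partial\varphi(X_t)$ on $\{h>0\}$, which has full Lebesgue measure. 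On $\{h=0\}$ it shows that $g_t$ lies in the normal cone of $\cl\dom\varphi$ at $X_t$.
Apply the subgradient inequality on the first set. On the second, $\langle\alpha_t-X_t,g_t\rangle\le 0$, because $\alpha_t\in\cl\dom\varphi$ on $[a,b]$ whenever $\int_a^b\varphi(\alpha_t)\,dt<\infty$. Together these give the claim for all $a\le b$ at once.
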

We now state the main result on convergence in the setting of stochastic subdifferential inclusions.

    %From Proposition \ref{prop-ineq-convex} and Theorem \ref{main-thm-max-mon} we have
    \begin{theorem}\label{Teo:main:convexfunction}
     Under the assumptions of Theorem~\ref{main-thm-max-mon}, suppose that $A=\partial \varphi$ for given  proper, convex, and lower semicontinuous function 
$\varphi \colon \mathbb{R}^d \to \mathbb{R} \cup \{\infty\}$. Then,
    	\begin{enumerate}[label = (\alph*)]
    		\item $\mathbb{E}\left(\int_0^\infty (\varphi(X_s)-\min \varphi )ds\right) < \infty$. Consequently, $\mathbb{P}$-a.s. $$\int_0^\infty (\varphi(X_t)-\min \varphi) dt<\infty \text{ and }\displaystyle\liminf_{t\to \infty} \varphi(X_t) = \min \varphi.$$
    		\item There is a random variable $X^\star\in L^2_{\mathbb{P}}(\Omega;\R^d)$ such that $\mathbb{P}$-a.s. $\displaystyle\lim_{t\to \infty}X_t = X^\star$, $\mathbb{P}(X^\star\in \mathcal{S}) = 1$ and $\displaystyle\lim_{t\to \infty}\mathbb{E}(\|X_t-X^\star\|^2) = 0$.
            \item Suppose that
            \begin{equation}\label{cond-conv-varphi}
                \displaystyle\limsup_{t\to \infty}\mathbb{E}(\|\partial^0\varphi(X_t)\|^2)<\infty,
            \end{equation} then $\displaystyle\lim_{t\to \infty} \mathbb{E}(\varphi(X_t)) = \min \varphi$.
            \item It holds $\mathbb{P}$-a.s., $\displaystyle\lim_{t\to \infty}\varphi(\Aver{X}{t}) = \min \varphi$. Furthermore,
              $$  \mathbb{E}(\varphi(\Aver{X}{t} )-\min \varphi)\leq \frac{1}{2t}\mathbb{E}(d(X_0;\mathcal{S})^2) + \frac{1}{2t}\int_0^t\mathbb{E}(\sigma_\infty(s)^2) ds.$$
              \item Suppose that $\varphi$ is $\mu$-strongly convex, then $\mathcal{S} = \{x^\star\}$ and we have \begin{equation}\label{ineq-aver-varphi1}
        \mathbb{E}(\|\Aver{X}{t} -x^\star\|^2)\leq \frac{1}{\mu t}\mathbb{E}(\| X_0 - x^\star\|) + \frac{1}{\mu t}\int_0^t\mathbb{E}(\sigma_\infty(s)^2) ds.
    \end{equation}
    	\end{enumerate}
    
    \end{theorem}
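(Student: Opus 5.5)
The core tool is Proposition~\ref{prop-ineq-convex} applied with the constant process $\alpha_t\equiv x^\star\in\mathcal{S}$. This gives, $\mathbb{P}$-a.s. for $s\le t$,
\[
\int_s^t(\varphi(X_\tau)-\min\varphi)\,d\tau\le \int_s^t\langle X_\tau-x^\star,dY_\tau\rangle .
\]
Inserting this into Theorem~\ref{main-thm-max-mon}(a) yields the master inequality
\[
\mathbb{E}\|X_t-x^\star\|^2+2\,\mathbb{E}\!\int_s^t(\varphi(X_\tau)-\min\varphi)\,d\tau\le \mathbb{E}\|X_s-x^\star\|^2+\mathbb{E}\!\int_s^t\sigma_\infty^2 .
\]
By Remark~\ref{F0-measurable-remark} we may take $x^\star=\proj_{\mathcal{S}}(X_0)$, which is $\mathcal{F}_0$-measurable; the proposition extends to this random constant, since it is continuous and adapted.

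For (a), set $s=0$ and let $t\to\infty$; monotone convergence gives finiteness of the expectation. The almost sure finiteness follows, and Lemma~\ref{lem-convergence-zero} then gives the $\liminf$.

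For (b), I would not rely on demipositivity, although Corollary~\ref{cor-theo-conv-general} would apply directly since $\partial\varphi$ is demipositive. Instead I argue pathwise. By (a), there is a sequence $t_n\to\infty$ with $\varphi(X_{t_n})\to\min\varphi$. The sequence $X_{t_n}$ is bounded by Theorem~\ref{main-thm-max-mon}(b)–(c), so it has a cluster point $\bar x$. Lower semicontinuity gives $\varphi(\bar x)\le\min\varphi$, hence $\bar x\in\mathcal{S}$. Since $\lim_t\|X_t-\bar x\|$ exists on the full-measure event from Theorem~\ref{main-thm-max-mon}(c), that limit is $0$. Square integrability of $X^\star$ and the $L^2$ convergence then follow from Theorem~\ref{main-thm-max-mon}(b) and dominated convergence.

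For (d), the bound comes from dividing the master inequality (with $s=0$) by $2t$ and applying Lemma~\ref{jensen-convex} pathwise before taking expectations. The choice $x^\star=\proj_{\mathcal{S}}X_0$ produces the $d(X_0;\mathcal{S})^2$ term. The almost sure convergence $\varphi(\Aver{X}{t})\to\min\varphi$ also follows from Lemma~\ref{jensen-convex}: the Cesàro mean $\frac1t\int_0^t(\varphi(X_s)-\min\varphi)\,ds$ tends to $0$ because the integral is finite a.s. by (a), and $\varphi(\Aver{X}{t})-\min\varphi\ge0$.

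For (e), strong convexity gives $\varphi(x)-\min\varphi\ge\frac{\mu}{2}\|x-x^\star\|^2$. Combining this with the bound in (d) gives $\mathbb{E}\|\Aver{X}{t}-x^\star\|^2\le\frac{2}{\mu}\,\mathbb{E}(\varphi(\Aver{X}{t})-\min\varphi)$, which is the stated rate (the exponent in $\mathbb{E}\|X_0-x^\star\|$ should be read as squared).

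The main obstacle is (c). The plan is to use the subgradient inequality
\[
\varphi(X_t)-\min\varphi\le\langle \partial^0\varphi(X_t),X_t-X^\star\rangle\le \|\partial^0\varphi(X_t)\|\,\|X_t-X^\star\|
\]
and then Cauchy–Schwarz in expectation. This gives $\mathbb{E}(\varphi(X_t)-\min\varphi)\le (\mathbb{E}\|\partial^0\varphi(X_t)\|^2)^{1/2}(\mathbb{E}\|X_t-X^\star\|^2)^{1/2}$. The first factor is eventually bounded by \eqref{cond-conv-varphi}, and the second tends to $0$ by (b). Two points need care. First, the hypothesis implicitly requires $X_t\in\dom\partial\varphi$ for large $t$ (almost surely for a.e. $t$); I would interpret \eqref{cond-conv-varphi} as including this, with $\|\partial^0\varphi(X_t)\|=\infty$ otherwise. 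Second, $\mathbb{E}\varphi(X_t)$ is well defined and bounded below by $\min\varphi$. With these two points settled, (c) follows.
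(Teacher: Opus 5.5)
Your proof is correct. For (a), (d) and (e) it is the paper's argument: Proposition~\ref{prop-ineq-convex} with the $\mathcal F_0$-measurable constant $\proj_{\mathcal S}(X_0)$, fed into Theorem~\ref{main-thm-max-mon}(a) through Remark~\ref{F0-measurable-remark}, followed by Beppo Levi, Lemma~\ref{lem-convergence-zero}, Lemma~\ref{jensen-convex} and strong convexity. Your reading of (e) with $\mathbb{E}\|X_0-x^\star\|^2$ is what that derivation actually yields.

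The genuine difference is (b). The paper invokes demipositivity of $\partial\varphi$ and Corollary~\ref{cor-theo-conv-general}, i.e.\ it passes through Theorem~\ref{general_convergence_SDI}. That argument compares $X$ on $[T,\infty[$ with the deterministic flows $x_T$ started at $X_T$. It then uses the known convergence of those flows to get $\mathbb{E}\,d(X_t;\mathcal S)^2\to 0$, extracts an a.s.\ convergent subsequence, and closes with the Fej\'er property of Theorem~\ref{main-thm-max-mon}(c).

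You bypass the deterministic evolution theory entirely. You work on the single full-measure event where $\int_0^\infty(\varphi(X_t)-\min\varphi)\,dt<\infty$ and $\lim_t\|X_t-x\|$ exists for all $x\in\mathcal S$ simultaneously. The simultaneity is essential, since your $\bar x$ depends on $\omega$, and you use it correctly. On that event, one cluster point along a minimizing sequence lies in $\mathcal S$ by lower semicontinuity, and this alone forces convergence of the whole trajectory.

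What each route buys: yours is shorter and self-contained but relies on the variational structure of $\partial\varphi$. The paper's is a general mechanism covering any operator whose deterministic flow converges, with (b) as one instance.

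Finally, in (c) you use the random limit $X^\star$ from (b) together with $\varphi(X^\star)=\min\varphi$ a.s. The paper's text writes a fixed $x^\star\in\mathcal S$ at that point. For a fixed $x^\star$ one only has $\mathbb{E}\|X_t-x^\star\|^2\to\mathbb{E}\|X^\star-x^\star\|^2$, which need not vanish unless $\mathcal S$ is a singleton. So your version is the one that actually proves (c).
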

    \begin{proof}
    In this setting, the set of equilibrium points is given by $\mathcal{S} = \operatorname{argmin} \varphi$. Let us take $\widehat{X} := \proj_{\mathcal{S}}(X_0)$. Observe that $\widehat{X}\in L^2_{\mathbb{P}}(\Omega;\R^d)$ and it is $\mathcal{F}_0$-measurable, by using Proposition \ref{prop-ineq-convex} we have $\mathbb{P}$-a.s.
	\begin{equation*}\label{convexity-Xs}
		\int_0^t(\varphi(X_s)-\min\varphi )ds\leq \int_0^t\langle X_s-\widehat{X},dY_s\rangle.
	\end{equation*}
	Taking expectation, and by using Theorem \ref{main-thm-max-mon} (see Remark \ref{F0-measurable-remark}) we have
	\begin{equation}\label{eqn-varphi-exp1}
		\begin{aligned}
			\mathbb{E}\left(\int_0^t(\varphi(X_s)-\min\varphi )ds\right)\leq \mathbb{E}(d(X_0;\mathcal{S})^2)+\int_0^t\mathbb{E}(\sigma_\infty(s)^2) ds.
		\end{aligned}
	\end{equation}
	By Beppo Levi's convergence Theorem we have that
	\begin{equation*}
		\mathbb{E}\left( \int_0^\infty(\varphi(X_s)-\min\varphi )ds \right) \leq \mathbb{E}(d(X_0;\mathcal{S})^2)+\int_0^\infty\mathbb{E}(\sigma_\infty(s)^2) ds<\infty.
	\end{equation*}
	In particular, we have that $\mathbb{P}$-a.s. $\int_0^\infty(\varphi(X_s)-\min\varphi )ds<\infty$, thus by using Lemma \ref{lem-convergence-zero} we have $\mathbb{P}$-a.s. $\displaystyle\liminf_{t\to \infty}\varphi(X_t)=\min\varphi$, then \emph{(a)} is proved.\\
    To check \emph{(b)}, we note that $\partial \varphi$ is demipositive (see, e.g., \cite[Proposition 6.2]{MR2731260}), then by Corollary \ref{cor-theo-conv-general}, \emph{(b)} holds.
    
    To prove \emph{(c)}, we simply note that the condition $\displaystyle\limsup_{t\to \infty}\mathbb{E}(\|\partial^0\varphi(X_t)\|^2)<\infty$ implies that there is $T>0$ such that for a.e. $t\geq T$, $\partial^0\varphi(X_t)$ is well-defined. Hence, for a.e. $t\geq T$
    \begin{equation*}
        \varphi(X_t)-\varphi(x^\star)\leq \langle \partial^0\varphi(X_t), X_t-x^\star\rangle\leq \|\partial^0\varphi(X_t)\|\|X_t-x^\star\|
    \end{equation*}
    By taking expectation and using Hölder inequality we have
    \begin{equation*}
        \mathbb{E}(\varphi(X_t)-\min \varphi)\leq \mathbb{E}(\|\partial^0\varphi(X_t)\|^2)^{1/2}\mathbb{E}(\|X_t-x^\star\|^2)^{1/2}.
    \end{equation*}
    Thus, \emph{(c)} holds by taking $t\to \infty$ since $\displaystyle\lim_{t\to \infty}\mathbb{E}(\|X_t-x^\star\|^2) = 0$.
    On the other hand, by Lemma \ref{jensen-convex}, for all $t\in \R_+$
    \begin{equation*}
        \varphi(\Aver{X}{t})-\min\varphi\leq \frac{1}{t}\int_0^t(\varphi(X_s)-\min \varphi) ds\leq \frac{1}{t}\int_0^\infty(\varphi(X_s)-\min \varphi) ds.
    \end{equation*}
    It follows that $\mathbb{P}$-a.s. $\displaystyle\lim_{t\to \infty} \varphi(\Aver{X}{t})=\min \varphi$. Moreover, dividing by $t$ the inequality \eqref{eqn-varphi-exp1} yields
    \begin{equation*}
        \mathbb{E}\left(\frac{1}{t}\int_0^t \varphi(X_s)ds-\min \varphi\right)\leq \frac{1}{2t}\mathbb{E}(d(X_0;\mathcal{S})^2) + \frac{1}{2t}\int_0^t\mathbb{E}(\sigma_\infty(s)^2) ds
    \end{equation*}
    and by Lemma \ref{jensen-convex}, we conclude that 
    \begin{equation}\label{ineq-values-aver1}
        \mathbb{E}(\varphi(\Aver{X}{t} )-\min \varphi)\leq \frac{1}{2t}\mathbb{E}(d(X_0;\mathcal{S})^2) + \frac{1}{2t}\int_0^t\mathbb{E}(\sigma_\infty(s)^2) ds.
    \end{equation}
    Hence, \emph{(d)} holds. Finally, suppose that $\varphi$ is $\mu$-strongly convex, then $\mathcal{S} = \{x^\star\}$ and 
    \begin{equation*}
        \frac{\mu}{2}\|\Aver{X}{t} -x^\star\|^2 + \varphi(x^\star)\leq \varphi(\Aver{X}{t} ).
    \end{equation*}
    Taking expectation and using inequality \eqref{ineq-values-aver1} we obtain directly \eqref{ineq-aver-varphi1}, thus \emph{(e)} is done.
	\end{proof}
    \begin{remark}
        A sufficient condition to ensure \eqref{cond-conv-varphi} is to assume that the function $\varphi$ satisfies $\dom \partial\varphi$ is closed and there is $c>0$ such that $\|\partial^0\varphi(x)\|\leq c(\|x\|+1), \forall x\in \dom \partial\varphi$. 
    \end{remark}
We close this section with establishing a concentration bound of the ergodic averaged trajectory in terms of the objective function value gap 
$$
\Delta\varphi(x):=\varphi(x)-\min\varphi.
$$
Let $x^{\star}$ be a solution, and define $E_{x^{\star}}(t):=\frac{1}{2}||X(t)-x^{\star}||^{2}$. Itô's formula yields 
$$
\frac{1}{t}\int_{0}^{t}\Delta\varphi(X(s))ds \leq \frac{E_{x^{\star}}(0)}{t}+\frac{1}{t}\int_{0}^{t}\langle X_{s}-x^{\star},P^\ast\sigma(s,X_{s})dB_{s}\rangle+\frac{1}{2t}\int_{0}^{t}\sigma_{\infty}(s)^{2}ds. 
$$
Applying Jensen's inequality, this yields 
$$
\Delta\varphi(\Aver{X}{t})\leq \frac{E_{x^{\star}}(0)}{t}+\frac{1}{t}\int_{0}^{t}\langle X_{s}-x^{\star},P^\ast\sigma(s,X_{s})dB_{s}\rangle+\frac{1}{2t}\int_{0}^{t}\sigma_{\infty}(s)^{2}ds. 
$$
To control the martingale part in the middle of the above expression, we introduce Wiener process $(W_{t})_{t\geq 0}$, defined by 
$$
dW_{t}=-P^\ast\sigma(t,X_{t})dB_{t},\quad W_{0}=X_{0}.
$$
Applying Itô's formula to the process $t\mapsto \pi(t)=\frac{1}{2}||W_{t}-x^{\star}||^{2}$, we obtain 
$$
\int_{0}^{t}\langle W_{s}-x^{\star},P^\ast\sigma(s,X_{s})dB_{s}\rangle \leq E_{x^{\star}}(0)+\int_{0}^{t}\sigma^{2}_{\infty}(s)ds.
$$
This allows us to decompose the continuous martingale term as follows:
\begin{align*}
\int_{0}^{t}\langle X_{s}-x^{\star},P^\ast\sigma(s,X_{s})dB_{s}\rangle=&\int_{0}^{t}\langle X_{s}-W_{s},P^\ast\sigma(s,X_{s})dB_{s}\rangle\\
&+\int_{0}^{t}\langle W_{s}-x^{\star},P^\ast\sigma(s,X_{s})dB_{s}\rangle\\
\leq &E_{x^{\star}}(0)+\int_{0}^{t}\sigma^{2}_{\infty}(s)ds+U_{t}
\end{align*}
where $U_{t}:=\int_{0}^{t}\langle X_{s}-W_{s},P^\ast\sigma(s,X_{s})dB_{s}\rangle$. Hence, 
$$
\Delta\varphi(\Aver{X}{t})\leq\frac{1}{t}(S_{t}+U_{t}),\text{where }S_{t}:=2E_{x_{^{*}}}(0)+\int_{0}^{t}\sigma^{2}_{\infty}(s) ds.
$$
For $\epsilon>0$, we thus see 
$$
\mathbb{P}(\Delta\varphi(\Aver{X}{t})\geq\epsilon)\leq\mathbb{P}(U_{t}\geq t\epsilon-S(t)).
$$
This estimate is the basis for the following large-deviations bound.
\begin{theorem}
Under the assumptions of Theorem~\ref{Teo:main:convexfunction}, consider $x^{\star}$   an arbitrary solution. The, for all $\epsilon>0$ and $t>0$, we have
\begin{equation}
\mathbb{P}\left(\Delta\varphi(\bar{X}_{t})\geq Q_{0}(t)+\epsilon Q_{1}(t) \right)\leq \exp(-\epsilon^{2}/4),
\end{equation}
where 
\begin{align}
Q_{0}(t)&:= \frac{1}{t}\left(\int_{0}^{t}\sigma^{2}_{\infty}(s)ds+2 E_{x^{\star}}(0)\right),\\
Q_{1}(t)&:=  \frac{1}{t}\sqrt{\delta(t)},\text{ where }\delta(t):={\mathbb{E}}\left(\int_{0}^{t}\sigma^{2}_{\infty}(s)\cdot||W_{s}-X_{s}||^{2}ds\right).
\end{align}
\end{theorem}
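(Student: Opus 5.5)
The plan is to start from the pathwise reduction obtained just before the statement,
\[
\Delta\varphi(\Aver{X}{t})\leq \frac{1}{t}\,(S_t+U_t),\qquad S_t = 2E_{x^\star}(0)+\int_0^t\sigma_\infty^2(s)\,ds,
\]
and to observe that $\frac{1}{t}S_t = Q_0(t)$. The event $\{\Delta\varphi(\Aver{X}{t})\geq Q_0(t)+\epsilon Q_1(t)\}$ is therefore contained in $\{U_t\geq \epsilon\sqrt{\delta(t)}\}$. This reduces the statement to a Gaussian-type tail bound for the continuous local martingale
\[
U_t=\int_0^t\langle X_s-W_s,P^\ast\sigma(s,X_s)\,dB_s\rangle
\]
at level $\epsilon\sqrt{\delta(t)}$.

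The quadratic variation of $U$ is controlled by
\[
\langle U\rangle_t=\int_0^t\|\sigma(s,X_s)^\ast P(X_s-W_s)\|^2\,ds\leq \int_0^t\sigma_\infty^2(s)\,\|X_s-W_s\|^2\,ds,
\]
and its expectation is at most $\delta(t)$. For the exponential bound I would use the exponential supermartingale $\mathcal{E}^\lambda_s=\exp(\lambda U_s-\frac{\lambda^2}{2}\langle U\rangle_s)$. Since $\mathcal{E}^\lambda$ is a positive local martingale, it is a supermartingale, so $\mathbb{E}\,\mathcal{E}^\lambda_t\leq 1$. By Markov,
\[
\mathbb{P}(U_t\geq a)\leq e^{-\lambda a}\,\mathbb{E}\, e^{\lambda U_t}.
\]
I would then split $e^{\lambda U_t}=\sqrt{\mathcal{E}^{2\lambda}_t}\cdot \exp(\lambda^2\langle U\rangle_t)$ and apply Cauchy--Schwarz. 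This gives $\mathbb{E}e^{\lambda U_t}\leq \big(\mathbb{E}\exp(2\lambda^2\langle U\rangle_t)\big)^{1/2}$. Choosing $\lambda\sim \epsilon/\sqrt{\delta(t)}$ and $a=\epsilon\sqrt{\delta(t)}$ should yield a bound of the form $e^{-c\epsilon^2}$. The constant $1/4$ in the exponent should come out of this Cauchy--Schwarz splitting: the factor $2\lambda$ in $\mathcal{E}^{2\lambda}$ halves the effective exponent, since $\lambda a-\lambda^2\delta=\epsilon^2/4$ at $\lambda=\epsilon/(2\sqrt{\delta})$.

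The main obstacle is that $\delta(t)$ bounds only the \emph{expectation} of $\langle U\rangle_t$, whereas the exponential argument needs $\langle U\rangle_t\leq \delta(t)$ almost surely, or at least a bound on an exponential moment. My first attempt would be to argue that the bound is meant with the deterministic quantity $\delta(t)$ dominating $\langle U\rangle_t$. This holds, for instance, if $\sigma_\infty$ is deterministic and $\|X_s-W_s\|$ is controlled pathwise. Note that $X_s-W_s=X_0-Y_s+2N_s-X_0$ by construction, so no direct pathwise bound is available. In that case I would instead present the estimate via the Bernstein inequality for continuous martingales,
\[
\mathbb{P}\big(U_t\geq a,\ \langle U\rangle_t\leq b\big)\leq e^{-a^2/(2b)},
\]
with $a=\epsilon\sqrt{\delta(t)}$ and $b=2\delta(t)$, which gives exactly $e^{-\epsilon^2/4}$. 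The complementary event $\{\langle U\rangle_t>2\delta(t)\}$ would have to be absorbed by an additional assumption or by Markov's inequality, and I expect this to be where the author either interprets $\delta(t)$ as an almost-sure bound or accepts the Bernstein form. I would commit to the Bernstein route, with $b=2\delta(t)$ chosen precisely so that the exponent matches $\epsilon^2/4$.

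Finally, I would check the measurability and integrability prerequisites. These are that $U$ is a genuine continuous local martingale, which follows from Theorem~\ref{main-thm-max-mon}(b) and the $L^2$-bound on $N$, and that $W$ is well defined. Then I would assemble the inclusion of events with the tail bound.
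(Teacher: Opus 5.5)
Your reduction to the event $\{U_t\geq\epsilon\sqrt{\delta(t)}\}$ and your Cauchy--Schwarz splitting are exactly the paper's argument, but your proposal does not prove the stated inequality. The Bernstein route only yields $\mathbb{P}(U_t\geq\epsilon\sqrt{\delta(t)},\ \langle U\rangle_t\leq 2\delta(t))\leq e^{-\epsilon^2/4}$. Since $\delta(t)$ controls only $\mathbb{E}\langle U\rangle_t$, Markov's inequality gives merely $\mathbb{P}(\langle U\rangle_t>2\delta(t))\leq 1/2$. What you actually obtain is the bound $e^{-\epsilon^2/4}+\mathbb{P}(\hat{\delta}(t)>2\delta(t))$, where $\hat{\delta}(t):=\int_0^t\sigma_\infty^2(s)\|W_s-X_s\|^2\,ds$ satisfies $\langle U\rangle_t\leq\hat{\delta}(t)$ and $\delta(t)=\mathbb{E}\hat{\delta}(t)$. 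Likewise, your first route stops at $(\mathbb{E}\exp(2\lambda^2\langle U\rangle_t))^{1/2}$, which is not dominated by $\exp(\lambda^2\delta(t))$.

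You have identified precisely the step where the paper's own proof breaks. The paper follows your first route with $b=\theta^2$ and then asserts $\sqrt{\mathbb{E}\exp(2\theta^2\rho_t)}\leq\exp(\theta^2\mathbb{E}\rho_t)$, with $\rho_t=\langle U\rangle_t$, ``by Jensen's inequality''. For the convex function $\exp$, Jensen gives the reverse inequality. So the paper does not supply the missing idea: it neither treats $\delta(t)$ as an almost-sure bound nor passes to the Bernstein form.

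The gap cannot be closed using only $\mathbb{E}\langle U\rangle_t\leq\delta(t)$, because the putative bound $\mathbb{P}(U_t\geq\epsilon\sqrt{\mathbb{E}\langle U\rangle_t})\leq e^{-\epsilon^2/4}$ is false for general continuous martingales. Take $U_s=\sqrt{K}\,\xi B_s$, where $\xi$ is an $\mathcal{F}_0$-measurable Bernoulli variable independent of $B$ with $\mathbb{P}(\xi=1)=1/K$. Then $\mathbb{E}\langle U\rangle_1=1$, while $\mathbb{P}(U_1\geq\epsilon)=K^{-1}\mathbb{P}(B_1\geq\epsilon/\sqrt{K})$. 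For $K=4$ and $\epsilon=6$ this is about $3.4\cdot 10^{-4}>e^{-9}\approx1.2\cdot10^{-4}$, and the ratio to $e^{-\epsilon^2/4}$ tends to infinity as $\epsilon\to\infty$.

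The statement is even literally false in the degenerate case $\sigma\equiv0$, $X_0=x^\star$. There $\Delta\varphi(\bar{X}_t)=Q_0(t)=Q_1(t)=0$, so the event has probability $1$.

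A correct version must change the hypothesis or the conclusion. One option is to replace $\delta(t)$ by a deterministic $D>0$ with $\hat{\delta}(t)\leq D$ almost surely. Then the supermartingale property alone gives $\mathbb{E}e^{\theta U_t}\leq e^{\theta^2D/2}$, and hence the sharper tail $e^{-\epsilon^2/2}$, with no Cauchy--Schwarz step needed. The other option is to state your Bernstein bound with the extra term $\mathbb{P}(\hat{\delta}(t)>2\delta(t))$ written explicitly, for $\delta(t)>0$. Both are valid theorems; neither is the statement as given.
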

\begin{proof}
Let be $\rho_{t}:=[U,U]_{t}$ the quadratic variation process of the continuous semimartingale $\{(W_{t},\mathcal{F}_{t});t\geq 0\}$. We have 
\begin{align*}
\rho_{t}&=\int_{0}^{t}\|(P^\ast\sigma(s,X_s))^{\top}(W_{s}-X_{s})\|^{2}ds \leq \int_{0}^{t}\|\sigma(s,X_s)\|^{2}\cdot\|W_{s}-X_{s}\|^{2}ds \\
&\leq \int_{0}^{t}\sigma^{2}_{\infty}(s)\|W_{s}-X_{s}\|^{2}ds=:\hat{\delta}(t),
\end{align*}
it follows that  $\delta(t) = \mathbb{E}(\hat{\delta}(t))$. We next bound the stochastic exponential $t\mapsto \exp(\theta U_{t}),\theta\in\mathbb{R}$ via the Cauchy-Schwarz-Bunyakowski inequality as 
\begin{align*}
\mathbb{E}\left[\exp(\theta U_{t})\right]&=\mathbb{E}\left[\exp(\theta U_{t}-b\rho_{t})\exp(b\rho_{t})\right]
\leq \sqrt{\mathbb{E}\left[\exp(2\theta U_{t}-2b\rho_{t})\right]}\cdot \sqrt{\mathbb{E}\left[\exp(2b\rho_{t})\right]}. 
\end{align*}
Setting $b=\theta^{2}$, the expression inside the first expected values is just the stochastic exponential of the process $t\mapsto 2\theta U_t$. Hence, $t\mapsto \exp(2\theta U_t)$ is a submartingale, and we conclude 
\begin{equation}\label{eq:Martbound}
\mathbb{E}\left[\exp(\theta U_{t})\right]\leq \sqrt{\mathbb{E}\left[\exp(2\theta^{2}\rho_{t})\right]}\leq {\exp(\theta^2\mathbb{E}(\rho_t))\leq \exp(\theta^{2}\delta(t)).} 
\end{equation}
where is used Jensen's inequality. It follows, for $a>0$,
\begin{align*}
\mathbb{P}[U_{t}\geq a]%&=\mathbb{P}\left[\exp(\theta U_{t})\geq \exp(\theta a)\right]\\
&\leq\exp(-\theta a)\cdot\mathbb{E}\left[\exp(\theta U_t)\right]
\leq \exp(-\theta a)\exp(\theta^{2}\delta(t))=\exp(-\theta a+\theta^{2}\delta(t)),
\end{align*}
where the first inequality is just Markov's inequality, and the second line uses \eqref{eq:Martbound}. Minimizing the expression on the right-hand side with respect to $\theta$, yields $\theta=\frac{a}{2\delta(t)}$, resulting in the upper bound
$$
\mathbb{P}[U_{t}\geq a]\leq \exp\left(-\frac{a^{2}}{4\delta(t)}\right)
$$
From these computations, we deduce that 
\[
\mathbb{P}(U_{t}\geq a)\leq \exp(-\epsilon^{2}/4),  \text{whenever $a\geq \epsilon\sqrt{\delta(t)}$.}
\] 
This leads us the inequalities  
\begin{align*}
\mathbb{P}\left(\Delta\varphi(\bar{X}_t)\geq Q_{0}(t)+\epsilon Q_{1}(t) \right)&\leq \mathbb{P}(U_{t}\geq tQ_{0}(t)+t\epsilon Q_{1}(t)-S_{t})\\
&=\mathbb{P}(U_{t}\geq \epsilon \sqrt{\delta(t)})\leq \exp(-\epsilon^{2}/4),
\end{align*}
which concludes the proof.
\end{proof}

\section{Tikhonov Regularization}\label{Section6}
The final section of this paper concerns a stochastic Tikhonov regularization of the SDI
\eqref{MainSDI}, given by \eqref{tikhonov-system}, where the parameter function
$\epsilon\colon \Omega\times\mathbb{R}_+\to \mathbb{R}_+\setminus\{0\}$ satisfies
$\lim_{t\to\infty}\epsilon(t)=0$.
The additional drift term $-\epsilon(t)X_t$ plays the role of a Tikhonov regularizer for
\eqref{MainSDI}. Such perturbations have attracted considerable interest, as they often
improve the asymptotic behavior of trajectories (see, e.g.,
\cite{MR2462703, MR1398330}).
The result below shows that every trajectory of \eqref{tikhonov-system} converges to the
minimum-norm zero of $A$, namely $\proj_{\mathcal{S}}(0)$, without imposing further
structural assumptions on $A$.

For every $\eta>0$, we define $x_\eta := J_{\frac{1}{\eta}A}(0)$. By \cite[Theorem 23.48]{MR3616647},
we know that $\displaystyle\lim_{\eta\to 0} x_\eta = \proj_{\mathcal{S}}(0) =: x^\star$. Moreover, it
can be shown that the mapping $\eta \mapsto x_\eta$ is locally Lipschitz on
$\mathbb{R}_+ \setminus \{0\}$ and that, almost everywhere,
\begin{equation}\label{bound-derivative-curve1}
    \left\|\frac{d}{d\eta} x_\eta\right\| \leq \frac{\|x^\star\|}{\eta}
\end{equation}
(see, e.g., \cite[p.~529]{MR1398330}).

Recently, \cite[Theorem 14]{MR4991707} established a related result in the setting
where $A = \nabla F + \partial g$ on a Hilbert space, with $F$ a convex function of class
$\mathcal{C}^{1,+}$ and $g$ a continuous convex function. Their analysis requires the curve
$t \mapsto x_{\epsilon(t)}$ to satisfy a certain compatibility condition with the parameter function
$\epsilon$. In contrast, we do not impose any assumption on the trajectory $t \mapsto x_{\epsilon(t)}$;
instead, we only assume the standard conditions on $\epsilon$, which allows us to recover and extend
the deterministic result of \cite[Proposition 5]{MR2462703} to the stochastic
setting in finite dimension. Formally, we need to compare the stochstic dynamic give by the solution of \eqref{tikhonov-system} to its deterministic counterpart. Formally, for the initial condition $x_0\in \dom A$, we take the absolutely continuous trajectory $x\colon \R_+\to \R^d$ which satisfies
\begin{equation}\label{eqn_tikhonov-deterministic}
    x'(t)\in -A(x(t)) - \epsilon(t)x(t), \ x(0) = x_0.
\end{equation}
On the parameter function $\epsilon$, we now assume that it does not depend on $\Omega$, and only satisfies 
\begin{theorem}\label{Main_resultTych}
  Let $A \colon \mathbb{R}^d \rightrightarrows \mathbb{R}^d$  be  a maximal monotone operator  with $\mathcal{S}\neq \emptyset$.  Consider $\sigma \colon \Omega \times  \R_+ \times \mathbb{R}^d \to \mathbb{R}^{d\times \ell}$  satisfying  Assumption~\ref{assumption01}  and consider a  parameter function $\epsilon : \mathbb{R}_+ \to \R_+ $
   such that $\lim_{t\to \infty}\epsilon(t) = 0$ and $\int_0^\infty\epsilon(t)dt = \infty$.   Let $X_0$ be an $\mathcal{F}_0$-measurable, square-integrable random variable, and consider  the solution $x$ of \eqref{eqn_tikhonov-deterministic} starting from any $x_0\in \dom A$.   Then, the  Tikhonov regularized SDI \eqref{tikhonov-system} has an unique solution $X$ and it satisfies that   $$\displaystyle\lim_{t\to \infty}\mathbb{E}(\sup_{s\geq t}\|X_s-x(s)\|^2) = 0.$$
\end{theorem}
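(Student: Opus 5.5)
Existence and uniqueness follow from Theorem~\ref{Theo:EU} applied with $F(t,x):=-\epsilon(t)x$. This drift is continuous in $x$ and satisfies $\langle F(t,x)-F(t,y),x-y\rangle=-\epsilon(t)\|x-y\|^2\le 0$, so condition b) holds with $\zeta\equiv 0$. Conditions c) and f) hold provided $\epsilon$ is locally integrable and continuous (or at least satisfies $\int_0^t|\epsilon(s+\delta)-\epsilon(s)|\,ds\to 0$, which is true for every $\epsilon\in L^1_{\mathrm{loc}}$ by continuity of translations in $L^1$). The assumptions on $\sigma$ are those of Assumption~\ref{assumption01}. So the substance is the convergence statement.

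The plan is to compare $X$ with the deterministic trajectory $x$ directly, pathwise, through Itô's formula.

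\emph{Step 1: the pathwise identity.} Since $x$ is absolutely continuous with $x(t)\in\dom A$ and $-x'(t)-\epsilon(t)x(t)\in A(x(t))$ for a.e.\ $t$, the pair $(x(t),-x'(t)-\epsilon(t)x(t))$ is a deterministic selection of $\gph A$. Using \cite[Proposition 6.17]{MR3308895}, exactly as in the proof of Theorem~\ref{general_convergence_SDI}, we get
\[
\int_a^b\langle X_t-x(t),dY_t\rangle\ge \int_a^b\langle X_t-x(t),-x'(t)-\epsilon(t)x(t)\rangle\,dt .
\]
Here $Y$ is the bounded-variation part associated with $A$, once the drift $\int_0^t F(s,X_s)\,ds$ has been separated as in \eqref{eqn-XY_Auxiliar}.

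Next apply Itô's formula to $\tfrac12\|X_t-x(t)\|^2$, with $X_t=X_0-Y_t-\int_0^t\epsilon(s)X_s\,ds+\int_0^tP^\ast\sigma(s,X_s)\,dB_s$. The $dY$ and $x'$ terms combine via the inequality above, and the two Tikhonov terms combine into $-\epsilon(t)\|X_t-x(t)\|^2$. Setting $D_t:=\|X_t-x(t)\|^2$, this gives a.s., for $t\ge s$,
\[
D_t\le D_s-2\int_s^t\epsilon(\tau)D_\tau\,d\tau+\int_s^t\sigma_\infty(\tau)^2\,d\tau+2\int_s^t\langle X_\tau-x(\tau),P^\ast\sigma(\tau,X_\tau)dB_\tau\rangle .
\]

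\emph{Step 2: decay in mean.} First secure integrability. The crude bound dropping the $\epsilon$ term, together with the BDG and stopping-time argument of Theorem~\ref{main-thm-max-mon}(b), shows that $\mathbb E\sup_{t}D_t<\infty$, since $x$ is bounded (it converges to $x^\star$ by \cite{MR2462703}, or simply is bounded). Hence the stochastic integral is a true martingale. Taking expectations, $\psi(t):=\mathbb E D_t$ satisfies the hypothesis of Lemma~\ref{lemma-comparison} with $\ell=2\epsilon$ and $\beta=\mathbb E\sigma_\infty^2$. Therefore
\[
\psi(t)\le e^{-2\int_{t_0}^t\epsilon}\psi(t_0)+\int_{t_0}^te^{-2\int_s^t\epsilon}\,\mathbb E\sigma_\infty(s)^2\,ds .
\]
Because $\int^\infty\epsilon=\infty$, the first term vanishes. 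For the second, split the integral at a large $T$: the part beyond $T$ is at most $\int_T^\infty\mathbb E\sigma_\infty^2$, which is small, and the part before $T$ is killed by the factor $e^{-2\int_T^t\epsilon}$. Hence $\psi(t)\to 0$.

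\emph{Step 3: the supremum (main obstacle).} Upgrading $\mathbb E D_t\to0$ to $\mathbb E\sup_{s\ge t}D_s\to0$ is the delicate point. From Step 1, dropping the nonpositive $\epsilon$ term, for $s\ge t$ we have
\[
D_s\le D_t+\int_t^\infty\sigma_\infty^2+2\sup_{r\ge t}|\mathcal M_r-\mathcal M_t|,
\]
where $\mathcal M$ denotes the martingale term. BDG gives
\[
\mathbb E\sup_{r\ge t}|\mathcal M_r-\mathcal M_t|\le \mathcal K\,\mathbb E\Big[\Big(\sup_{r\ge t}D_r\Big)^{1/2}\Big(\int_t^\infty\sigma_\infty^2\Big)^{1/2}\Big].
\]
Young's inequality, as in \eqref{eqBDG}, absorbs $\tfrac14\mathbb E\sup_{r\ge t}D_r$ into the left-hand side; the absorption is legitimate after localizing with stopping times and passing to the limit by Fatou, with finiteness guaranteed by Step 2's integrability. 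This yields
\[
\mathbb E\sup_{s\ge t}D_s\le 2\,\mathbb E D_t+C\,\mathbb E\int_t^\infty\sigma_\infty^2 .
\]
Both terms on the right tend to $0$, by Step 2 and Assumption~\ref{assumption01}(c), which concludes the proof.
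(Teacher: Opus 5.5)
Your proposal is correct and follows essentially the same route as the paper. Both arguments use the pathwise It\^o estimate for $\|X_t-x(t)\|^2$ via the monotonicity inequality of \cite[Proposition 6.17]{MR3308895}, then the decay of $\mathbb{E}\|X_t-x(t)\|^2$ through Lemma~\ref{lemma-comparison} and $\int_0^\infty\epsilon=\infty$, and finally the BDG/Young absorption with stopping times to pass to the supremum. Your version is in places slightly more careful than the paper's: you secure $\mathbb{E}\sup_t\|X_t-x(t)\|^2<\infty$ before taking expectations, so the stochastic integral genuinely has zero mean, and you use the correct sign $F(t,x)=-\epsilon(t)x$.
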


\begin{proof}
  First, the existence of the solution $(X,Y,M)$ is given by Theorem \ref{Theo:EU} by symply consider $F(t,x)=\epsilon(t) x$. Let us consider  $L:=\operatorname{span}(\dom A  - \dom A)$  by $P^\ast \colon \R^n \to L$ the linear proyection onto $L$. Second, by Itô's Lemma we have
    \begin{equation}\label{eqn-tikhonv2222-}
        \begin{aligned}
            \frac{1}{2}\|X_t-x(t)\|^2 = \ & \frac{1}{2}\|X_s-x(s)\|^2 + \int_s^t\langle X_\tau-x(\tau),dX_\tau-dx(\tau)\rangle\\
            &+\frac{1}{2}\int_s^t\|P^\ast\sigma(\tau,X_\tau)\|^2d\tau\\
            \leq \ & \frac{1}{2}\|X_s-x(s)\|^2 + \int_s^t\langle X_\tau-x(\tau), P^\ast\sigma(\tau,X_\tau)dB_\tau\rangle\\
            &-\int_s^t\epsilon(\tau)\|X_\tau-x(\tau)\|^2d\tau + \frac{1}{2}\int_s^t\sigma_\infty(\tau)^2d\tau\\
            &+\int_s^t\langle X_\tau-x(\tau),-dY_\tau+(-x'(\tau)-\epsilon(\tau)x(\tau))d\tau\rangle\\
            \leq \ & \frac{1}{2}\|X_s-x(s)\|^2 + \int_s^t\langle X_\tau-x(\tau), P^\ast\sigma(\tau,X_\tau)dB_\tau\rangle\\
            &-\int_s^t\epsilon(\tau)\|X_\tau-x(\tau)\|^2d\tau + \frac{1}{2}\int_s^t\sigma_\infty(\tau)^2d\tau.
        \end{aligned}
    \end{equation}
    where we have used the monotonicity by virtue of \cite[Proposition 6.17]{MR3308895}. Then, taking expectation we have 
    \begin{equation*}
        \begin{aligned}
            \mathbb{E}(\|X_t-x(t)\|^2)\leq \mathbb{E}(\|X_s-x(s)\|^2)-2\int_s^t\epsilon(\tau)\mathbb{E}(\|X_\tau-x(\tau)\|^2)d\tau+\int_s^t\mathbb{E}(\sigma_\infty(\tau)^2)d\tau.
        \end{aligned}
    \end{equation*}
    Then, by using Lemma \ref{lemma-comparison}, for all $t_0\in \R_+$ and $t\geq t_0$
    \begin{equation*}
        \begin{aligned}
        &\mathbb{E}(\|X_t-x(t)\|^2)\\
        \leq & \ \exp\left(-2\int_{t_0}^t\epsilon(\tau)d\tau\right)\left[\mathbb{E}(\|X_{t_0}-x(t_0)\|^2)+\int_{t_0}^t\exp\left(2\int_{t_0}^s\epsilon(\tau)d\tau\right)\mathbb{E}(\sigma_\infty(s)^2)ds\right]
        \end{aligned}
    \end{equation*}
    Replacing $t_0 = 0$ in the above inequality yields that $\mathbb{E}(\|X_t-x(t)\|^2)<\infty, \forall t\in \R_+$. Now, take $\eta>0$, then take $t_0$ big enough such that $\int_{t_0}^\infty\mathbb{E}(\sigma_\infty(\tau)^2)d\tau\leq \eta$, it follows that \begin{equation*}
        \mathbb{E}(\|X_t-x(t)\|^2)\leq \exp\left(-2\int_{t_0}^t\epsilon(\tau)d\tau\right)\mathbb{E}(\|X_{t_0}-x(t_0)\|^2) + \eta, \forall t\geq t_0.
    \end{equation*}
    It follows that $\displaystyle\limsup_{t\to \infty}\mathbb{E}(\|X_t-x(t)\|^2)\leq\eta$, and given that $\eta>0$ is arbitrary, we conclude that $$\displaystyle\lim_{t\to \infty}\mathbb{E}(\|X_t-x(t)\|^2)=0.$$
    Now, fix $t_0\in \R_+$, then define 
    \begin{equation*}
        \mathscr{M}_t:=\int_{t_0}^t\langle X_\tau-x(\tau),P^\ast \sigma(\tau,X_\tau)dB_\tau\rangle    
    \end{equation*} 
    Now, consider the stopping time $\tau_n := \inf\{\,t\geq r : \|X_t\|\geq n \,\}$.  It follows by \eqref{eqn-tikhonv2222-} that 
	\begin{equation}\label{Tyk01_stop2}
			\frac{1}{2}\|X_{t\wedge \tau_n}-x(t\wedge \tau_n)\|^2
			\leq \frac{1}{2}\|X_{t_0}-x(t_0)\|^2 + \sup_{t\geq t_0}|\mathscr{M}_{t\wedge\tau_n}|+ \frac{1}{2}\int_{t_0}^\infty\sigma_\infty(\tau)^2d\tau.
		\end{equation}
	By the same argument as in \eqref{eqBDG}, there exists a constant $\mathcal{K}>0$ such that, for all $n\in\N$,
	\begin{equation}\label{Tyk0223323}
		\mathbb{E}\Big(\sup_{t \geq t_0} |\mathscr{M}_{t\wedge\tau_n}|\Big) 
		\leq \frac{1}{4}\,\mathbb{E}\Big(\sup_{t\geq t_0}\|X_{t\wedge\tau_n}-x(t\wedge\tau_n)\|^2\Big) 
		+  \mathcal{K} \int_{t_0}^\infty \mathbb{E}(\sigma_\infty(\tau)^2) \, d\tau.
	\end{equation}
    Thus, taking expectation in \eqref{Tyk01_stop2} and using \eqref{Tyk0223323} yields
    \begin{equation*}           \mathbb{E}\Big(\sup_{t\geq t_0}\|X_{t\wedge\tau_n}-x(t\wedge\tau_n)\|^2\Big)\leq 2\mathbb{E}(\|X_{t_0}-x(t_0)\|^2) + (4\mathcal{K}+2)\int_{t_0}^\infty\mathbb{E}(\sigma_\infty(\tau)^2)d\tau.
    \end{equation*}
    Sending $n\to \infty$ in the above inequality, we can see that $\displaystyle \lim_{t\to \infty}\mathbb{E}(\sup_{s\geq t}\|X_s-x(s)\|^2) = 0$.
\end{proof}

Although the proof of Theorem~\ref{Main_resultTych} is based on estimating the discrepancy between $X_t$ and an auxiliary trajectory $x(t)$, the bounds obtained in the argument are not sharp enough to provide a quantitative rate for $\|X_t-x(t)\|$. As a consequence, even if one could derive a rate for $\|x(t)-x^\star\|$, this information cannot be directly converted into a rate of convergence for $\|X_t-x^\star\|$, nor for its expected value.

For this reason, by imposing a stronger assumption on the parameter function $\epsilon$, it becomes possible to compare the solution of the SDI~\eqref{tikhonov-system} with the curve $t\mapsto x_{\epsilon(t)}$, leading to a more explicit rate of convergence.

\begin{theorem}\label{thm-tikhonov}
 In the setting of Theorem~\ref{Main_resultTych}  let us suppose that the parameter function $\epsilon$ is $\mathcal{C}^1$ and satisfies $\displaystyle\lim_{t\to \infty}  \frac{\left|\epsilon'(t) \right|}{\epsilon(t)^2} = 0.  $   Then, the   solution $X$ of the  Tikhonov regularized SDI \eqref{tikhonov-system} satisfies that there exists a constant $C >0$ such that for all $r \geq 0$
 \begin{align}\label{T01}
 \mathbb{E}\Big(\sup_{s\geq r}\|X_{s }-x_{\epsilon(s )}\|^2\Big) 
		\leq C \left( \sup_{t\geq r}\frac{|\epsilon'(t)|}{\epsilon(t)^2} + \int_{r}^\infty \mathbb{E}(\sigma_\infty(\tau)^2) \, d\tau \right).
 \end{align}
\end{theorem}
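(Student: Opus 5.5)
The plan is to repeat the argument of Theorem~\ref{Main_resultTych}, with the deterministic trajectory $x(t)$ replaced by the Tikhonov curve $z(t):=x_{\epsilon(t)}$. Since $\epsilon$ is $\mathcal{C}^1$ and $\eta\mapsto x_\eta$ is locally Lipschitz, $z$ is locally absolutely continuous. Moreover, $z(t)=J_{\frac{1}{\epsilon(t)}A}(0)$ means $-\epsilon(t)z(t)\in A(z(t))$. By \eqref{bound-derivative-curve1} and the chain rule,
\[
\|z'(t)\|\leq \frac{\|x^\star\|}{\epsilon(t)}|\epsilon'(t)|.
\]
The pair $(\alpha_t,\beta_t):=(z(t),-\epsilon(t)z(t))$ is deterministic, continuous and lies in $\gph A$. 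It is therefore admissible in \eqref{measure-prop-solution}, which gives
\[
\int_s^t\langle X_\tau-z(\tau),dY_\tau\rangle\geq -\int_s^t\epsilon(\tau)\langle X_\tau-z(\tau),z(\tau)\rangle d\tau .
\]

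Next, apply It\^o's formula to $\frac12\|X_t-z(t)\|^2$ using \eqref{eqn-XY_Auxiliar} with $F(t,x)=-\epsilon(t)x$. The drift contributions are
\[
-\langle X-z,dY\rangle-\epsilon\langle X-z,X\rangle d\tau-\langle X-z,z'\rangle d\tau .
\]
Combining this with the monotonicity inequality above gives the drift bound $-\epsilon\|X-z\|^2-\langle X-z,z'\rangle$. Young's inequality then yields
\[
|\langle X-z,z'\rangle|\leq \frac{\epsilon}{2}\|X-z\|^2+\frac{\|z'\|^2}{2\epsilon}\leq \frac{\epsilon}{2}\|X-z\|^2+\frac{\|x^\star\|^2}{2}\,\epsilon\,\gamma(\tau)^2,
\]
where $\gamma(\tau):=|\epsilon'(\tau)|/\epsilon(\tau)^2$. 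Hence, with the local martingale $\mathscr{M}_t:=\int_r^t\langle X_\tau-z(\tau),P^\ast\sigma(\tau,X_\tau)dB_\tau\rangle$, we obtain the pathwise inequality
\[
\tfrac12\|X_t-z(t)\|^2\leq \tfrac12\|X_r-z(r)\|^2-\tfrac12\int_r^t\epsilon\|X-z\|^2d\tau+\tfrac{\|x^\star\|^2}{2}\int_r^t\epsilon\gamma^2d\tau+\mathscr{M}_t+\tfrac12\int_r^t\sigma_\infty^2d\tau .
\]

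The first estimate is the bound in expectation. Take expectations and use Lemma~\ref{lemma-comparison} with $\ell=\epsilon$ and $\beta=\|x^\star\|^2\epsilon\gamma^2+\mathbb{E}\sigma_\infty^2$. The key observation for the $\gamma$ term is
\[
\int_r^t e^{-\int_s^t\epsilon}\,\epsilon(s)\gamma(s)^2ds\leq \Big(\sup_{s\geq r}\gamma(s)\Big)^2\Big(1-e^{-\int_r^t\epsilon}\Big)\leq \sup_{s\geq r}\gamma(s)^2 .
\]
This is where the hypothesis $\int\epsilon=\infty$ together with the weight $\epsilon$ in front of $\gamma^2$ pays off. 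Since $\gamma\to0$, we have $\sup_{s\geq r}\gamma(s)^2\leq C\sup_{s\geq r}\gamma(s)$. The noise term is bounded by $\int_r^\infty\mathbb{E}\sigma_\infty^2$. The initial term $e^{-\int_r^t\epsilon}\,\mathbb{E}\|X_r-z(r)\|^2$ is the delicate one, because it is not small for $t$ close to $r$. I would handle it by applying the comparison lemma from a time $t_0\leq r$, namely $t_0=0$. This gives
\[
\mathbb{E}\|X_r-z(r)\|^2\leq e^{-\int_0^r\epsilon}\,C_0+C\Big(\sup_{s\geq r/2}\gamma+\int_{r/2}^\infty\mathbb{E}\sigma_\infty^2\Big),
\]
after splitting the integral at $r/2$.

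The main obstacle is precisely this residual transient $e^{-\int_0^r\epsilon}$, which \eqref{T01} does not display. I expect to absorb it either by bounding it by a multiple of $\sup_{t\geq r}\gamma(t)$, or by letting $C$ depend on the data and exploiting that the right-hand side of \eqref{T01} is bounded below for small $r$. If neither works, the honest statement carries this additional decaying term.

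Finally, to pass to $\mathbb{E}\sup_{s\geq r}$, take the supremum in the pathwise inequality after stopping at $\tau_n$, drop the negative $\epsilon\|X-z\|^2$ term, and control $\mathbb{E}\sup|\mathscr{M}|$ by Burkholder--Davis--Gundy exactly as in \eqref{eqBDG}. This gives the bound $\frac14\,\mathbb{E}\sup\|X-z\|^2+\mathcal{K}\int_r^\infty\mathbb{E}\sigma_\infty^2$. The drift term $\|x^\star\|^2\int_r^\infty\epsilon\gamma^2$ is not integrable in general, so in the supremum estimate I would instead keep $-\frac12\int\epsilon\|X-z\|^2$ and bound only $\sup_t\int_r^t(\ldots)$, which remains problematic. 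My fallback is to apply the pathwise inequality on each window starting from $s$ and use the expectation bound already obtained. Letting $n\to\infty$ by Fatou then yields \eqref{T01}.
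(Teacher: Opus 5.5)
Your pathwise inequality is correct, but the step that produces $\mathbb{E}\sup_{s\geq r}$ is missing. Integrating the drift over $[r,t]$ leaves $\|x^\star\|^2\int_r^t\epsilon\gamma^2\,d\tau$, and this need not stay bounded. For instance, $1/\epsilon(t)\sim t/\ln t$ gives $\epsilon\gamma^2\sim 1/(t\ln t)$. The ``windows'' fallback cannot repair this. Bounds on $\mathbb{E}\|X_s-x_{\epsilon(s)}\|^2$ at fixed times, or on each of infinitely many windows, do not control the expectation of a supremum over all $s\geq r$.

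The missing idea is the paper's level-set (excursion) argument. Put $h_t:=\|X_t-x_{\epsilon(t)}\|$ and $\mathcal{L}_r:=\|x^\star\|\sup_{t\geq r}\gamma(t)$. For $\tau\geq r$ the drift is at most $\epsilon h(\mathcal{L}_r-h)$; this is the paper's form, obtained from $\|\frac{d}{d\tau}x_{\epsilon(\tau)}\|\leq\epsilon(\tau)\mathcal{L}_r$. Your Young version gives $\frac12\epsilon(\mathcal{L}_r^2-h^2)$ instead. Either way the drift is nonpositive wherever $h>\mathcal{L}_r$. Decompose the open set $\{t>r:\ h_t>\mathcal{L}_r\}$ into disjoint intervals $]a_i,b_i[$, restart the estimate at each $a_i$, and drop the drift there. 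This gives, pathwise and uniformly in $t>r$,
\[
\tfrac12 h_t^2\leq \tfrac12\max\{h_r^2,\mathcal{L}_r^2\}+\tfrac12\int_r^\infty\sigma_\infty(\tau)^2\,d\tau+2\sup_{s\in[r,t]}|\mathscr{M}_s| .
\]
Stopping and BDG, exactly as in \eqref{eqBDG}, then close the argument. No integrability of $\epsilon\gamma^2$ is needed, and Lemma~\ref{lemma-comparison} is not used.

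Your worry about the transient is justified, and it exposes a flaw in the paper's own proof. The paper asserts $h_{a_i}\leq\mathcal{L}_r$ for every excursion. This fails for an excursion starting at $a_i=r$ when $h_r>\mathcal{L}_r$, so the term $\frac12 h_r^2$ is silently dropped. To see this concretely, take $A\equiv0$, $\sigma\equiv0$ and $X_0=x_0\neq0$. Then $x_\eta\equiv0=x^\star$, $\mathcal{L}_r=0$ and $X_t=e^{-\int_0^t\epsilon}x_0$, yet \eqref{Tyk01} would force $X_t=0$.

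This term cannot be absorbed into the right-hand side of \eqref{T01} either. In the same example, \eqref{T01} reads $e^{-2\int_0^r\epsilon}\|x_0\|^2\leq C\sup_{t\geq r}\gamma(t)$. Write $\epsilon=1/u$, so that $\gamma=|u'|$. Take $u(0)=1$ and $u'$ continuous, positive and nonincreasing, with $u'\to0$ and $\int_0^\infty u'=\infty$. Arrange that on $[r_k-1,r_k]$ it drops to a level below
\[
e^{-2}e^{-2\int_0^{r_k-1}\epsilon}/k\leq e^{-2\int_0^{r_k}\epsilon}/k ,
\]
which holds because $\epsilon\leq 1$. Then keep $u'$ constant long enough to add $1$ to $\int u'$. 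All hypotheses hold, yet the ratio of the two sides at $r=r_k$ is at least $k\|x_0\|^2/C$.

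So neither of your absorption strategies can succeed, and \eqref{T01} as stated is false in general. What the corrected argument proves is \eqref{T01} with an extra term $C\,\mathbb{E}\|X_r-x_{\epsilon(r)}\|^2$ on the right. By Lemma~\ref{lemma-comparison} this term tends to zero; this is where $\int_0^\infty\epsilon=\infty$ is genuinely used. Its rate, however, is not controlled by $\sup_{t\geq r}\gamma(t)+\int_r^\infty\mathbb{E}(\sigma_\infty(\tau)^2)\,d\tau$.
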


\begin{proof} 
Let us consider  $L:=\operatorname{span}(\dom A  - \dom A)$  by $P^\ast \colon \R^n \to L$ the linear proyection onto $L$. By Itô's Lemma, we obtain
	\begin{equation*}
		\begin{aligned}
			\frac{1}{2}\|X_t-x_{\epsilon(t)}\|^2
			= \ &\frac{1}{2}\|X_s-x_{\epsilon(s)}\|^2 
			+ \int_s^t \langle X_\tau-x_{\epsilon(\tau)},-dY_\tau\rangle \\
			&- \int_s^t \epsilon(\tau)\langle X_\tau-x_{\epsilon(\tau)},X_\tau\rangle \,d\tau \\
			&+ \int_s^t \langle X_\tau-x_{\epsilon(\tau)},-\frac{d}{d\tau}x_{\epsilon(\tau)}\rangle\, d\tau 
			+ \frac{1}{2}\int_s^t \|P^\ast\sigma(\tau,X_\tau)\|^2 \, d\tau \\
			&+ \int_s^t \langle X_\tau-x_{\epsilon(\tau)},P^\ast\sigma(\tau,X_\tau)\, dB_\tau \rangle.
		\end{aligned}
	\end{equation*}
	Since $-\epsilon(\tau)x_{\epsilon(\tau)}\in A(x_{\epsilon(\tau)})$ and $\tau\mapsto x_{\epsilon(\tau)}$ is continuous, we have
	\begin{equation*}
		\int_s^t \langle X_\tau-x_{\epsilon(\tau)},-dY_\tau\rangle
		\leq \int_s^t \epsilon(\tau)\langle X_\tau-x_{\epsilon(\tau)},x_{\epsilon(\tau)}\rangle \, d\tau.
	\end{equation*}
	Now, fix $r >0$ and take $t,s >r$. Then,
	\begin{equation*}
		\begin{aligned}
			\frac{1}{2}\|X_t-x_{\epsilon(t)}\|^2
			\leq \ &\frac{1}{2}\|X_s-x_{\epsilon(s)}\|^2 
			-\int_s^t \epsilon(\tau)\|X_\tau-x_{\epsilon(\tau)}\|^2 \, d\tau \\
			&+ \int_s^t \|X_\tau-x_{\epsilon(\tau)}\|\,
			\Big\|\frac{d}{d\tau}x_{\epsilon(\tau)}\Big\| \, d\tau
			+ \frac{1}{2}\int_s^t \sigma_\infty(\tau)^2 \, d\tau \\
			&+ \int_s^t \langle X_\tau-x_{\epsilon(\tau)},P^\ast\sigma(\tau,X_\tau)\, dB_\tau \rangle \\
			\leq \ &\frac{1}{2}\|X_s-x_{\epsilon(s)}\|^2 \\
			&+ \int_s^t \epsilon(\tau)\|X_\tau-x_{\epsilon(\tau)}\|\,
			\big(\mathcal{L}_{r}-\|X_\tau-x_{\epsilon(\tau)}\|\big) \, d\tau \\
			&+ \frac{1}{2}\int_s^t \sigma_\infty(\tau)^2 \, d\tau
			+ \int_s^t \langle X_\tau-x_{\epsilon(\tau)},P^\ast\sigma(\tau,X_\tau)\, dB_\tau \rangle,
		\end{aligned}
	\end{equation*}
	where we have used \eqref{bound-derivative-curve1}, and
	\[
	\mathcal{L}_{r} := \|x^\star\| \sup_{t\geq r}\frac{|\epsilon'(t)|}{\epsilon(t)^2}.
	\]
	Define
	\[
	\mathcal{O}_r= \{\,t>r : \|X_t-x_{\epsilon(t)}\| > \mathcal{L}_{r} \,\}.
	\]
	Since $t \mapsto \|X_t-x_{\epsilon(t)}\|$ is continuous, $\mathcal{O}_r$ is open, then there is a countable family of pairwise disjoints open intervals $\{]a_i,b_i[\}_{i\in \N}$ such that $\mathcal{O}_r = \bigcup_{i\in \N}]a_i,b_i[$.  
	For $t\notin \mathcal{O}_r$ we have $\|X_t-x_{\epsilon(t)}\|\leq \mathcal{L}_{r}$, while for $t\in ]a_i,b_i[$ the condition $\|X_{a_i}-x_{\epsilon(a_i)}\|\leq \mathcal{L}_{r}$ implies
	\begin{equation*}
		\frac{1}{2}\|X_t-x_{\epsilon(t)}\|^2
		\leq \frac{1}{2}\mathcal{L}_{r}^2 
		+ \frac{1}{2}\int_{r}^t \sigma_\infty(\tau)^2 \, d\tau
		+ \int_{a_i}^t \langle X_\tau-x_{\epsilon(\tau)},P^\ast\sigma(\tau,X_\tau)\, dB_\tau \rangle .
	\end{equation*}
	Moreover, for all $t>a_i$,
	\[
	\int_{a_i}^t \langle X_\tau-x_{\epsilon(\tau)},P^\ast\sigma(\tau,X_\tau)\, dB_\tau \rangle 
	\leq 2 \sup_{s\in [r, t]} |\mathscr{M}_s|,
	\]
	where
	\[
	\mathscr{M}_s := \int_{r}^s \langle X_\tau-x_{\epsilon(\tau)},P^\ast\sigma(\tau,X_\tau)\, dB_\tau \rangle.
	\]
	Thus, for all $t>r$,
	\begin{equation}\label{Tyk01}
		\frac{1}{2}\|X_t-x_{\epsilon(t)}\|^2
		\leq \frac{1}{2}\mathcal{L}_{r}^2 
		+ \frac{1}{2}\int_{r}^t \sigma_\infty(\tau)^2 \, d\tau
		+ 2 \sup_{s\in [r,t]} |\mathscr{M}_s|.
	\end{equation}
	Now, consider the stopping time $\tau_n := \inf\{\,t\geq r : \|X_t\|\geq n \,\}$.  It follows by \eqref{Tyk01} that 
	\begin{equation}\label{Tyk01_stop}
			\frac{1}{2}\|X_{t\wedge \tau_n}-x_{\epsilon(t\wedge \tau_n)}\|^2
			\leq \frac{1}{2}\mathcal{L}_{r}^2 
			+ \frac{1}{2}\int_{r}^\infty \sigma_\infty(\tau)^2 \, d\tau
			+ 2 \sup_{s\geq r} |\mathscr{M}_{s\wedge \tau_n} |.
		\end{equation}
	By the same argument as in \eqref{eqBDG}, there exists a constant $\mathcal{K}>0$ such that, for all $n\in\N$,
	\begin{equation}\label{Tyk02}
		\mathbb{E}\Big(\sup_{s \geq r} |\mathscr{M}_{s\wedge\tau_n}|\Big) 
		\leq \frac{1}{8}\,\mathbb{E}\Big(\sup_{s\geq r}\|X_{s\wedge\tau_n}-x_{\epsilon(s\wedge\tau_n)}\|^2\Big) 
		+ 2 \mathcal{K} \int_{r}^\infty \mathbb{E}(\sigma_\infty(\tau)^2) \, d\tau.
	\end{equation}
	Taking expectation in  \eqref{Tyk01_stop},  and then using \eqref{Tyk02}, we obtain a constant $C>0$ such that, for all $n\in\N$,
	\begin{equation*}
		\frac{1}{4}\, \mathbb{E}\Big(\sup_{s\geq r}\|X_{s\wedge\tau_n}-x_{\epsilon(s\wedge\tau_n)}\|^2\Big) 
		\leq \frac{1}{2} \mathcal{L}_{r}^2 + C\int_{r}^\infty \mathbb{E}(\sigma_\infty(\tau)^2) \, d\tau.
	\end{equation*}
    Letting $n\to\infty$ we conclude that \eqref{T01}  holds,   which concludes the result.
\end{proof}
Let us emphasize that the estimate obtained in~\eqref{T01} shows that the convergence rate of the stochastic trajectory $(X_t)_{t\geq 0}$ is \emph{comparable} to that of the deterministic curve $x_{\epsilon(t)}$, up to constants that are intrinsic to the dynamics.
In particular, since $x_{\epsilon(t)}$ is deterministic, its rate of convergence toward $x^\star$ is independent of the random initial condition $X_0$.
This motivates the final corollary, where we derive explicit convergence rates as consequences of the above theorem in special cases of the operator $A$.

\begin{corollary}
Under the assumptions of  Theorem~\ref{thm-tikhonov},    define $z_t = \sup_{s\geq t}\frac{|\epsilon'(s)|}{\epsilon(s)^2}$.   Then, the following assertions hold
    \begin{enumerate}[label = (\alph*)]
        \item Suppose that $A\colon \R^d\tto \R^d$ is $L$-Lipschitz with $L>0$. Then, the solution $(X_t)$ of  satisfies the following
    \begin{equation}\label{rate-tikhonov1}
        \mathbb{E}\left(\|A(X_t)\|^2\right) = \mathcal{O}\left( \epsilon(t)^2  +  z_t^2  + \int_t^\infty\mathbb{E}(\sigma_\infty(s)^2)ds\right).
    \end{equation}
    \item Suppose that $A = \partial\varphi$, for a proper, convex and lower semicontinuous function  $\varphi\colon \R^d \to \R\cup\{ \infty\}$ and it satisfies the following error bound inequality on $\mathcal{S}$:
    \begin{equation*}
        \exists p\geq 1, \gamma>0, r>\min \varphi : \varphi(x)-\min \varphi\geq \gamma d(x;\mathcal{S})^p, \forall x\in [\varphi\leq r].
    \end{equation*}
    Thus,   we have
    \begin{equation}\label{rate-tikhonov2}
        \mathbb{E}\left(\|X_t-x^\star\|^2\right) = \mathcal{O}\left( \epsilon(t)^{1/p}  +  z_t^2  + \int_t^\infty\mathbb{E}(\sigma_\infty(s)^2)ds\right)
    \end{equation}
    \end{enumerate}
    
\end{corollary}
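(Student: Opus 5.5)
The plan is to split both estimates through the Tikhonov curve $x_{\epsilon(t)}$. By Theorem~\ref{thm-tikhonov} with $r=t$ we already have
\[
\mathbb{E}\|X_t-x_{\epsilon(t)}\|^2\le C\Big(z_t+\int_t^\infty\mathbb{E}(\sigma_\infty(s)^2)ds\Big).
\]
It then remains to control the deterministic quantities $\|A(x_{\epsilon(t)})\|$ and $\|x_{\epsilon(t)}-x^\star\|$, and to combine them with $(a+b)^2\le 2a^2+2b^2$.

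\emph{(a).} If $A$ is $L$-Lipschitz, it is single-valued with full domain, so $L=\R^d$ and $M\equiv 0$. I would write
\[
\|A(X_t)\|^2\le 2L^2\|X_t-x_{\epsilon(t)}\|^2+2\|A(x_{\epsilon(t)})\|^2 .
\]
Since $A(x_{\epsilon})=-\epsilon x_\epsilon$ and $\|x_\epsilon\|\le\|x^\star\|$ (a standard resolvent property, also implicit in \eqref{bound-derivative-curve1}), the second term is at most $2\epsilon(t)^2\|x^\star\|^2$. Taking expectations and inserting \eqref{T01} gives a bound of order $\epsilon(t)^2+z_t+\int_t^\infty\mathbb{E}\sigma_\infty^2$.

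The stated form has $z_t^2$ rather than $z_t$. To get it, I would go back into the proof of Theorem~\ref{thm-tikhonov}, where the actual bound is $\tfrac12\mathcal{L}_r^2$ with $\mathcal{L}_r=\|x^\star\|z_r$. So \eqref{T01} in fact holds with $z_r^2$ in place of $z_r$, and I would record this refinement first.

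\emph{(b).} Here I would use $\|X_t-x^\star\|^2\le 2\|X_t-x_{\epsilon(t)}\|^2+2\|x_{\epsilon(t)}-x^\star\|^2$, so the work is to show $\|x_\epsilon-x^\star\|^2=\mathcal{O}(\epsilon^{1/p})$.
\begin{itemize}
\item Since $x_\epsilon$ minimizes $\varphi+\frac{\epsilon}{2}\|\cdot\|^2$, we get $\varphi(x_\epsilon)-\min\varphi\le\frac{\epsilon}{2}(\|x^\star\|^2-\|x_\epsilon\|^2)\le\frac{\epsilon}{2}\|x^\star\|^2$. Hence $x_\epsilon\in[\varphi\le r]$ for $\epsilon$ small, and the error bound yields $d(x_\epsilon;\mathcal{S})\le(\epsilon\|x^\star\|^2/2\gamma)^{1/p}$.
\item Let $\bar x=\proj_{\mathcal S}(x_\epsilon)$. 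The monotonicity inequality $\langle -\epsilon x_\epsilon-0,\,x_\epsilon-x^\star\rangle\ge0$ gives $\|x_\epsilon\|^2\le\langle x_\epsilon,x^\star\rangle$, so $\|x_\epsilon-x^\star\|^2\le\|x^\star\|^2-\|x_\epsilon\|^2$.
\item Since $x^\star$ is the minimal-norm point of $\mathcal S$, $\|x^\star\|\le\|\bar x\|\le\|x_\epsilon\|+d(x_\epsilon;\mathcal S)$. Therefore $\|x^\star\|^2-\|x_\epsilon\|^2\le 2\|x^\star\|\,d(x_\epsilon;\mathcal S)+d(x_\epsilon;\mathcal S)^2=\mathcal{O}(\epsilon^{1/p})$.
\end{itemize}
Combining this with the refined \eqref{T01} gives \eqref{rate-tikhonov2} for $t$ large. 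For small $t$ the bound holds trivially by enlarging the constant, using the finiteness of $\mathbb{E}\sup\|X_s-x^\star\|^2$ from \eqref{T01} at $r=0$.

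The main obstacle is the $z_t$ versus $z_t^2$ discrepancy, which I would resolve by the refinement above. A secondary point is that in (b) the natural rate for $\|x_\epsilon-x^\star\|$ is $\epsilon^{1/p}$ for the \emph{squared} distance, which matches the statement.
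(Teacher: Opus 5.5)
Your proposal is correct. Part (a) is exactly the paper's argument: the Lipschitz splitting through $x_{\epsilon(t)}$, the estimate $\|A(x_{\epsilon(t)})\|=\epsilon(t)\|x_{\epsilon(t)}\|\le\epsilon(t)\|x^\star\|$, and then \eqref{T01}.

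Your handling of $z_t$ versus $z_t^2$ also matches what the paper does. Its proof simply writes $z_t^2$ ``by \eqref{T01}'', and this is only justified by the $\tfrac12\mathcal{L}_r^2=\tfrac12\|x^\star\|^2z_r^2$ term inside the proof of Theorem~\ref{thm-tikhonov}, as you point out explicitly.

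The genuine difference is in (b). The paper obtains $\|x_{\epsilon(t)}-x^\star\|^2\le C^\ast\epsilon(t)^{1/p}$ for $t\ge T^\ast$ by citing \cite[Proposition 23]{MR4991707}. You prove it directly from three elementary facts:
\begin{enumerate}
\item the variational characterization of $x_\epsilon$, which gives $\varphi(x_\epsilon)-\min\varphi\le\frac{\epsilon}{2}\|x^\star\|^2$;
\item the error bound on the sublevel set, which gives $d(x_\epsilon;\mathcal{S})=\mathcal{O}(\epsilon^{1/p})$;
\item the monotonicity/minimal-norm estimate $\|x_\epsilon-x^\star\|^2\le\|x^\star\|^2-\|x_\epsilon\|^2\le 2\|x^\star\|\,d(x_\epsilon;\mathcal{S})+d(x_\epsilon;\mathcal{S})^2$.
\end{enumerate}
The citation is shorter. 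Your route is self-contained and yields explicit $C^\ast$ and $T^\ast$ in terms of $\|x^\star\|$, $\gamma$, $p$ and $r-\min\varphi$; for instance, $T^\ast$ is any time after which $\epsilon(t)\|x^\star\|^2\le 2(r-\min\varphi)$. It also makes transparent that only the first step uses $A=\partial\varphi$. The conversion from $d(x_\epsilon;\mathcal{S})$ to $\|x_\epsilon-x^\star\|^2$ works for any maximal monotone $A$. It also explains why the squared distance to $x^\star$ carries the same exponent $1/p$ as the unsquared distance to $\mathcal{S}$: the latter enters linearly through the cross term.

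One caveat applies equally to the paper. Your ``refinement'' of \eqref{T01} is only as solid as the proof of Theorem~\ref{thm-tikhonov}. There, the bound $\tfrac12\mathcal{L}_r^2$ uses $\|X_{a_i}-x_{\epsilon(a_i)}\|\le\mathcal{L}_r$, which is not guaranteed for a component of $\mathcal{O}_r$ starting at $a_i=r$. Relative to the paper's own proof of the corollary, your argument introduces no additional gap.
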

\begin{proof}
    Observe that, by \eqref{T01} we have  that 
    \begin{equation*}
        \begin{aligned}
        \mathbb{E}\Big(\sup_{s\geq t}\|X_{s}-x_{\epsilon(s)}\|^2\Big) 
		&\leq C \left( z_{t}^2  +  \int_{t}^\infty \mathbb{E}(\sigma_\infty(\tau)^2) \, d\tau \right)
        \end{aligned}
    \end{equation*}
    \emph{(a)}: Since $-\epsilon(s)x_{\epsilon(s)}\in A(x_{\epsilon(s)})$, we have that $$\|A(x_{\epsilon(s)})\| = \epsilon(s)\|x_{\epsilon(s)}\|\leq \epsilon(s)\|x^\star\|, \forall s\geq 0.$$
    Then,
    \begin{equation*}
        \begin{aligned}
            \mathbb{E}(\|A(X_t)\|^2) &\leq 2\mathbb{E}(\|A(X_t)-A(x_{\epsilon(t)})\|^2) + 2\mathbb{E}(\|A(x_{\epsilon(t)})\|^2)\\
            &\leq 2L^2\mathbb{E}(\|X_t-x_{\epsilon(t)}\|^2) + 2\|x^\star\|^2\epsilon(t)^2\\
            &\leq 2L^2C \left(z_t^2  +  \int_{t}^\infty \mathbb{E}(\sigma_\infty(\tau)^2) \, d\tau\right) + 2\|x^\star\|^2 \epsilon(t)^2,
        \end{aligned}
    \end{equation*}
    which allows to conclude \eqref{rate-tikhonov1}.\\
    \emph{(b)}: By using \cite[Proposition 23]{MR4991707} we have  there are constants $C^\ast, T^\ast>0$ such that $\mathbb{P}$-a.e.
    \begin{equation*}
        \|x_{\epsilon(t)}-x^\star\|^2\leq C^\ast \epsilon(t)^{1/p}, \forall t\geq T^\ast.
    \end{equation*}
    Thus, 
    \begin{equation*}
        \begin{aligned}
        \mathbb{E}\Big(\|X_{t}-x^\star\|^2\Big) &\leq 2\mathbb{E}\Big(\|X_{t}-x_{\epsilon(t)}\|^2\Big) + 2\mathbb{E}\Big(\| x_{\epsilon(t)} - x^\star\|^2\Big)\\
        &\leq 2 C  \left(  z_{t}^2  +  \int_{t}^\infty \mathbb{E}(\sigma_\infty(\tau)^2) \, d\tau \right) + 2C^\ast \epsilon(t)^{1/p}
        \end{aligned}
    \end{equation*}
    and then \eqref{rate-tikhonov2} follows.
\end{proof}

 \section{Conclusions}
 \label{sec:conclusions}
In this paper we were concerned with the asymptotic analysis of stochastic differential inclusions designed to solve monotone inclusion problems. Abandoning the classical non-empty interior assumption of the driving maximally monotone operator, we prove existence and uniqueness of solutions via a detailed constructed of orthogonal martingale processes. We then establish a finite time rate statement on the ergodic average of the process, in terms of gap functions associated with the monotone inclusion. Particularizing our model to the subgradient evolution case, asymptotic convergence to the minimum, as well as probabilistic concentration bounds are derived. Lastly, we study the strong convergence of the Tikhonov regularized subgradient flow. Our analysis reveals a slightly weakened Tikhonov regularization strategy is sufficient for ensuring strong convergence of the stochastic process to the least norm element. 

Many interesting directions for future research remain. First, it would be very interesting to study the long-time behavior of trajectories when the inclusion is not necessarily monotone, but rather hypomonotone \cite[Example 12.28]{MR1491362}. Additionally, understanding the multiscale aspects of the dynamical systems, and extension to the infinite-dimensional Hilbert space case are important next steps.

 \section*{Acknowledgments}
M. Staudigl acknowledges financial support from the Deutsche Forschungsgemeinschaft (DFG) -  Projektnummber 556222748 "non-stationary hierarchical minimization".

\bibliographystyle{plain}
\bibliography{references}

@incollection {MR957087,
    AUTHOR = {Moreau, J.-J.},
     TITLE = {Bounded variation in time},
 BOOKTITLE = {Topics in nonsmooth mechanics},
     PAGES = {1--74},
 PUBLISHER = {Birkh\"auser, Basel},
      YEAR = {1988},
      ISBN = {3-7643-1907-0},
   MRCLASS = {28B05 (26A45 46G99 46N05 70F99)},
  MRNUMBER = {957087},
MRREVIEWER = {Michel\ Valadier},
}

@article {MR4845874,
    AUTHOR = {Bo{\c t}, R. I. and Schindler, C.},
     TITLE = {On a stochastic differential equation with correction term
              governed by a monotone and Lipschitz continuous operator},
   JOURNAL = {Evol. Equ. Control Theory},
  FJOURNAL = {Evolution Equations and Control Theory},
    VOLUME = {14},
      YEAR = {2025},
    NUMBER = {3},
     PAGES = {463--493},
      ISSN = {2163-2472,2163-2480},
   MRCLASS = {60H10 (34D05 65K05 90C26 90C47)},
  MRNUMBER = {4845874},
MRREVIEWER = {Adrian\ Z\u alinescu},
       DOI = {10.3934/eect.2024064},
       URL = {https://doi.org/10.3934/eect.2024064},
}

@article {MR1442017,
    AUTHOR = {Bensoussan, A. and Rascanu, A.},
     TITLE = {Stochastic variational inequalities in infinite-dimensional
              spaces},
   JOURNAL = {Numer. Funct. Anal. Optim.},
  FJOURNAL = {Numerical Functional Analysis and Optimization. An
              International Journal},
    VOLUME = {18},
      YEAR = {1997},
    NUMBER = {1-2},
     PAGES = {19--54},
      ISSN = {0163-0563,1532-2467},
   MRCLASS = {34G20 (34A60 34F05 49J40)},
  MRNUMBER = {1442017},
MRREVIEWER = {John\ van der Hoek},
       DOI = {10.1080/01630569708816745},
       URL = {https://doi.org/10.1080/01630569708816745},
}

@article{luke2026asymptotic,
  title={Asymptotic behaviour of coupled random dynamical systems with multiscale aspects},
  author={Luke, D Russell and Schnebel, Johannes-Carl and Staudigl, Mathias and Peypouquet, Juan and Qu, Siqi},
  journal={arXiv preprint arXiv:2601.15411},
  year={2026}
}

@article {MR1626174,
    AUTHOR = {C\'epa, E.},
     TITLE = {Probl\`eme de {S}korohod multivoque},
   JOURNAL = {Ann. Probab.},
  FJOURNAL = {The Annals of Probability},
    VOLUME = {26},
      YEAR = {1998},
    NUMBER = {2},
     PAGES = {500--532},
      ISSN = {0091-1798,2168-894X},
   MRCLASS = {60H10 (60J60)},
  MRNUMBER = {1626174},
MRREVIEWER = {Constantin\ Tudor},
       DOI = {10.1214/aop/1022855642},
       URL = {https://doi.org/10.1214/aop/1022855642},
}

@article {MR3747470,
    AUTHOR = {Mertikopoulos, P. and Staudigl, M.},
     TITLE = {On the convergence of gradient-like flows with noisy gradient
              input},
   JOURNAL = {SIAM J. Optim.},
  FJOURNAL = {SIAM Journal on Optimization},
    VOLUME = {28},
      YEAR = {2018},
    NUMBER = {1},
     PAGES = {163--197},
      ISSN = {1052-6234,1095-7189},
   MRCLASS = {90C25 (60H10 90C15)},
  MRNUMBER = {3747470},
       DOI = {10.1137/16M1105682},
       URL = {https://doi.org/10.1137/16M1105682},
}

@article {MR4991707,
    AUTHOR = {Maulen-Soto, R. and Fadili, J. and Attouch, H.},
     TITLE = {Stochastic differential inclusions and {T}ikhonov
              regularization for stochastic non-smooth convex optimization
              in {H}ilbert spaces},
   JOURNAL = {Open J. Math. Optim.},
  FJOURNAL = {Open Journal of Mathematical Optimization (OJMO)},
    VOLUME = {6},
      YEAR = {2025},
     PAGES = {Art. No. 9, 30},
      ISSN = {2777-5860},
   MRCLASS = {90C25 (49J53 60H10 90C15 90C48)},
  MRNUMBER = {4991707},
}

@article {MR1398330,
    AUTHOR = {Attouch, H. and Cominetti, R.},
     TITLE = {A dynamical approach to convex minimization coupling
              approximation with the steepest descent method},
   JOURNAL = {J. Differential Equations},
  FJOURNAL = {Journal of Differential Equations},
    VOLUME = {128},
      YEAR = {1996},
    NUMBER = {2},
     PAGES = {519--540},
      ISSN = {0022-0396,1090-2732},
   MRCLASS = {90C48 (34E10 49L20)},
  MRNUMBER = {1398330},
MRREVIEWER = {J.\ Fr\'ed\'eric\ Bonnans},
       DOI = {10.1006/jdeq.1996.0104},
       URL = {https://doi.org/10.1006/jdeq.1996.0104},
}

@article {MR2462703,
    AUTHOR = {Cominetti, R. and Peypouquet, J. and Sorin, S.},
     TITLE = {Strong asymptotic convergence of evolution equations governed
              by maximal monotone operators with {T}ikhonov regularization},
   JOURNAL = {J. Differential Equations},
  FJOURNAL = {Journal of Differential Equations},
    VOLUME = {245},
      YEAR = {2008},
    NUMBER = {12},
     PAGES = {3753--3763},
      ISSN = {0022-0396,1090-2732},
   MRCLASS = {34G25 (34D10 47H05 47J35 47N20)},
  MRNUMBER = {2462703},
MRREVIEWER = {Nicolae\ H.\ Pavel},
       DOI = {10.1016/j.jde.2008.08.007},
       URL = {https://doi.org/10.1016/j.jde.2008.08.007},
}

@article {MR2731260,
    AUTHOR = {Peypouquet, J. and Sorin, S.},
     TITLE = {Evolution equations for maximal monotone operators: asymptotic
              analysis in continuous and discrete time},
   JOURNAL = {J. Convex Anal.},
  FJOURNAL = {Journal of Convex Analysis},
    VOLUME = {17},
      YEAR = {2010},
    NUMBER = {3-4},
     PAGES = {1113--1163},
      ISSN = {0944-6532,2363-6394},
   MRCLASS = {47J35 (34A60 34G10 47H05)},
  MRNUMBER = {2731260},
MRREVIEWER = {Teresa\ Winiarska},
}

@article {MR2593044,
    AUTHOR = {Attouch, H. and Czarnecki, M.-O.},
     TITLE = {Asymptotic behavior of coupled dynamical systems with
              multiscale aspects},
   JOURNAL = {J. Differential Equations},
  FJOURNAL = {Journal of Differential Equations},
    VOLUME = {248},
      YEAR = {2010},
    NUMBER = {6},
     PAGES = {1315--1344},
      ISSN = {0022-0396,1090-2732},
   MRCLASS = {34G25 (34D05 37B55 47N20 49J53 65J99)},
  MRNUMBER = {2593044},
MRREVIEWER = {Elena\ Resmerita},
       DOI = {10.1016/j.jde.2009.06.014},
       URL = {https://doi.org/10.1016/j.jde.2009.06.014},
}

@book {MR3308895,
	AUTHOR = {Pardoux, E. and R{\u a}{\c s}canu, A.},
	TITLE = {Stochastic differential equations, backward {SDE}s, partial
	differential equations},
	SERIES = {Stochastic Modelling and Applied Probability},
	VOLUME = {69},
	PUBLISHER = {Springer, Cham},
	YEAR = {2014},
	PAGES = {xviii+667},
	ISBN = {978-3-319-05713-2; 978-3-319-05714-9},
	MRCLASS = {60H05 (34F05 35D40 35R60 60H10 60J60)},
	MRNUMBER = {3308895},
	MRREVIEWER = {Mark\ A.\ McKibben},
	
	
}

@article {MR2577332,
	AUTHOR = {Voisei, M. D. and Z{\u a}linescu, C.},
	TITLE = {Maximal monotonicity criteria for the composition and the sum
	under weak interiority conditions},
	JOURNAL = {Math. Program.},
	FJOURNAL = {Mathematical Programming. A Publication of the Mathematical
	Programming Society},
	VOLUME = {123},
	YEAR = {2010},
	NUMBER = {1},
	PAGES = {265--283},
	ISSN = {0025-5610,1436-4646},
	MRCLASS = {47H05 (49J53 52A41)},
	MRNUMBER = {2577332},
	MRREVIEWER = {Radu\ Ioan\ Bo\c t},
	
	
}

@book {MR1121940,
	AUTHOR = {Karatzas, I. and Shreve, S. E.},
	TITLE = {Brownian motion and stochastic calculus},
	SERIES = {Graduate Texts in Mathematics},
	VOLUME = {113},
	EDITION = {Second},
	PUBLISHER = {Springer-Verlag, New York},
	YEAR = {1991},
	PAGES = {xxiv+470},
	ISBN = {0-387-97655-8},
	MRCLASS = {60J65 (35K99 35R60 60G44 60H10 60J60)},
	MRNUMBER = {1121940},
	
	
}

@book {MR2380366,
	AUTHOR = {Mao, X.},
	TITLE = {Stochastic differential equations and applications},
	EDITION = {Second},
	PUBLISHER = {Horwood Publishing Limited, Chichester},
	YEAR = {2008},
	PAGES = {xviii+422},
	ISBN = {978-1-904275-34-3},
	MRCLASS = {60-02 (34F05 34K50 60H10 60H30 91B28)},
	MRNUMBER = {2380366},
	
	
}

@book {MR348562,
	AUTHOR = {Br\'ezis, H.},
	TITLE = {Op\'erateurs maximaux monotones et semi-groupes de
	contractions dans les espaces de {H}ilbert},
	SERIES = {North-Holland Mathematics Studies},
	VOLUME = {No. 5},
	
	PUBLISHER = {North-Holland Publishing Co., Amsterdam-London; American
	Elsevier Publishing Co., Inc., New York},
	YEAR = {1973},
	PAGES = {vi+183},
	MRCLASS = {47H05},
	MRNUMBER = {348562},
	MRREVIEWER = {Bruce\ Calvert},
}

@article {MR2295146,
    AUTHOR = {Nesterov, Y.},
     TITLE = {Dual extrapolation and its applications to solving variational
              inequalities and related problems},
   JOURNAL = {Math. Program.},
  FJOURNAL = {Mathematical Programming. A Publication of the Mathematical
              Programming Society},
    VOLUME = {109},
      YEAR = {2007},
    NUMBER = {2-3},
     PAGES = {319--344},
      ISSN = {0025-5610,1436-4646},
   MRCLASS = {90C25 (49J40 90C47)},
  MRNUMBER = {2295146},
MRREVIEWER = {Vaithilingam\ Jeyakumar},
       DOI = {10.1007/s10107-006-0034-z},
       URL = {https://doi.org/10.1007/s10107-006-0034-z},
}

@article {MR3575644,
    AUTHOR = {Borwein, J. M. and Dutta, J.},
     TITLE = {Maximal monotone inclusions and {F}itzpatrick functions},
   JOURNAL = {J. Optim. Theory Appl.},
  FJOURNAL = {Journal of Optimization Theory and Applications},
    VOLUME = {171},
      YEAR = {2016},
    NUMBER = {3},
     PAGES = {757--784},
      ISSN = {0022-3239,1573-2878},
   MRCLASS = {49J52 (47H05 90C30)},
  MRNUMBER = {3575644},
MRREVIEWER = {Radu\ Ioan\ Bo\c t},
       DOI = {10.1007/s10957-015-0813-x},
       URL = {https://doi.org/10.1007/s10957-015-0813-x},
}

@book {MR3616647,
	AUTHOR = {Bauschke, H. H. and Combettes, P. L.},
	TITLE = {Convex analysis and monotone operator theory in {H}ilbert
	spaces},
	SERIES = {CMS Books in Mathematics/Ouvrages de Math\'ematiques de la
	SMC},
	EDITION = {Second},
	
	PUBLISHER = {Springer, Cham},
	YEAR = {2017},
	PAGES = {xix+619},
	ISBN = {978-3-319-48310-8; 978-3-319-48311-5},
	MRCLASS = {49-02 (41A65 46B20 46C05 47H05 90C25)},
	MRNUMBER = {3616647},
	
	
}

@article {MR3729487,
    AUTHOR = {Attouch, H. and Cabot, A. and Czarnecki,
              M.-O.},
     TITLE = {Asymptotic behavior of nonautonomous monotone and subgradient
              evolution equations},
   JOURNAL = {Trans. Amer. Math. Soc.},
  FJOURNAL = {Transactions of the American Mathematical Society},
    VOLUME = {370},
      YEAR = {2018},
    NUMBER = {2},
     PAGES = {755--790},
      ISSN = {0002-9947,1088-6850},
   MRCLASS = {34G25 (37N40 46N10 47H05)},
  MRNUMBER = {3729487},
MRREVIEWER = {Giovanni\ Colombo},
       DOI = {10.1090/tran/6965},
       URL = {https://doi.org/10.1090/tran/6965},
}

@article {MR4989859,
    AUTHOR = {Maul\'en S., R. and Fadili, J. and Attouch, H.},
     TITLE = {An {S}tochastic {D}ifferential {E}quation {P}erspective on
              {S}tochastic {C}onvex {O}ptimization},
   JOURNAL = {Math. Oper. Res.},
  FJOURNAL = {Mathematics of Operations Research},
    VOLUME = {50},
      YEAR = {2025},
    NUMBER = {4},
     PAGES = {3190--3221},
      ISSN = {0364-765X,1526-5471},
   MRCLASS = {90C25 (60H10 65C30)},
  MRNUMBER = {4989859},
       DOI = {10.1287/moor.2022.0162},
       URL = {https://doi.org/10.1287/moor.2022.0162},
}

@article {MR3361444,
    AUTHOR = {Combettes, P. L. and Pesquet, J.-C.},
     TITLE = {Stochastic quasi-{F}ej\'er block-coordinate fixed point
              iterations with random sweeping},
   JOURNAL = {SIAM J. Optim.},
  FJOURNAL = {SIAM Journal on Optimization},
    VOLUME = {25},
      YEAR = {2015},
    NUMBER = {2},
     PAGES = {1221--1248},
      ISSN = {1052-6234,1095-7189},
   MRCLASS = {47J25 (47H05 65K05 90C25 94A08)},
  MRNUMBER = {3361444},
MRREVIEWER = {Ramendra\ Krishna\ Bose},
       DOI = {10.1137/140971233},
       URL = {https://doi.org/10.1137/140971233},
}

@book {MR1491362,
    AUTHOR = {Rockafellar, R. T. and Wets, R. J.-B.},
     TITLE = {Variational analysis},
    SERIES = {Grundlehren der mathematischen Wissenschaften [Fundamental
              Principles of Mathematical Sciences]},
    VOLUME = {317},
 PUBLISHER = {Springer-Verlag, Berlin},
      YEAR = {1998},
     PAGES = {xiv+733},
      ISBN = {3-540-62772-3},
   MRCLASS = {49-02 (46N10 47N10 49J52 49K40 90C30)},
  MRNUMBER = {1491362},
MRREVIEWER = {Francis\ H.\ Clarke},
       DOI = {10.1007/978-3-642-02431-3},
       URL = {https://doi.org/10.1007/978-3-642-02431-3},
}

@book {MR3497465,
    AUTHOR = {Le Gall, J.-F.},
     TITLE = {Brownian motion, martingales, and stochastic calculus},
    SERIES = {Graduate Texts in Mathematics},
    VOLUME = {274},
   EDITION = {French},
 PUBLISHER = {Springer, [Cham]},
      YEAR = {2016},
     PAGES = {xiii+273},
      ISBN = {978-3-319-31088-6; 978-3-319-31089-3},
   MRCLASS = {60H05 (60G07 60G44 60H10 60J25 60J55 60J65)},
  MRNUMBER = {3497465},
       DOI = {10.1007/978-3-319-31089-3},
       URL = {https://doi.org/10.1007/978-3-319-31089-3},
}

@article {MR377609,
	AUTHOR = {Bruck, Jr., R. E.},
	TITLE = {Asymptotic convergence of nonlinear contraction semigroups in
	{H}ilbert space},
	JOURNAL = {J. Functional Analysis},
	FJOURNAL = {Journal of Functional Analysis},
	VOLUME = {18},
	YEAR = {1975},
	PAGES = {15--26},
	ISSN = {0022-1236},
	MRCLASS = {47H05 (46G05)},
	MRNUMBER = {377609},
	MRREVIEWER = {A.\ Pazy},
	
	
}

@book {MR755330,
	AUTHOR = {Aubin, J.-P. and Cellina, A.},
	TITLE = {Differential inclusions},
	SERIES = {Grundlehren der mathematischen Wissenschaften [Fundamental
	Principles of Mathematical Sciences]},
	VOLUME = {264},
	
	PUBLISHER = {Springer-Verlag, Berlin},
	YEAR = {1984},
	PAGES = {xiii+342},
	ISBN = {3-540-13105-1},
	MRCLASS = {49A50 (34A60 49E10)},
	MRNUMBER = {755330},
	MRREVIEWER = {S.\ Raczy\'nski},
	
	
}

@article {MR477932,
	AUTHOR = {Pazy, A.},
	TITLE = {On the asymptotic behavior of semigroups of nonlinear
	contractions in {H}ilbert space},
	JOURNAL = {J. Functional Analysis},
	FJOURNAL = {Journal of Functional Analysis},
	VOLUME = {27},
	YEAR = {1978},
	NUMBER = {3},
	PAGES = {292--307},
	ISSN = {0022-1236},
	MRCLASS = {47H99},
	MRNUMBER = {477932},
	MRREVIEWER = {Marshall\ Slemrod},
	
	
}

@incollection {MR2160665,
	AUTHOR = {Z{\u a}linescu, C.},
	TITLE = {A new proof of the maximal monotonicity of the sum using the
	{F}itzpatrick function},
	BOOKTITLE = {Variational analysis and applications},
	SERIES = {Nonconvex Optim. Appl.},
	VOLUME = {79},
	PAGES = {1159--1172},
	PUBLISHER = {Springer, New York},
	YEAR = {2005},
	ISBN = {978-0387-24209-5; 0-387-24209-0},
	MRCLASS = {49J53 (46N10 47H05)},
	MRNUMBER = {2160665},
	MRREVIEWER = {Giovanni\ Paolo\ Crespi},
	
	
}

@article {MR1774774,
	AUTHOR = {Pennanen, T.},
	TITLE = {Dualization of generalized equations of maximal monotone type},
	JOURNAL = {SIAM J. Optim.},
	FJOURNAL = {SIAM Journal on Optimization},
	VOLUME = {10},
	YEAR = {2000},
	NUMBER = {3},
	PAGES = {809--835},
	ISSN = {1052-6234,1095-7189},
	MRCLASS = {90C46 (47H04 47H05 47J05 49N15)},
	MRNUMBER = {1774774},
	
	
}
\end{document}